\documentclass[11pt,twoside,reqno]{amsart}

\usepackage{amssymb, latexsym}
\usepackage[italian,english]{babel}
\usepackage{amsmath,amsfonts,amsthm}
\usepackage{paralist}
\usepackage[top=2.50cm, bottom=2.50cm, left=2.50cm, right=2.50cm]{geometry}

\usepackage{xspace}
\usepackage[bookmarksnumbered,colorlinks]{hyperref}
\usepackage{graphics}

\usepackage{color}


\numberwithin{equation}{section}

\usepackage{xspace}
\usepackage[bookmarksnumbered,colorlinks]{hyperref}
\usepackage{graphics}
\def\bb#1\eb{\textcolor{blue}
{#1}} %
\def\br#1\er{\textcolor{red}
{#1}} %
\def\bv#1\ev{\textcolor{green}
{#1}} %
\def\bc#1\ec{\textcolor{cyan}
{#1}} %

\usepackage{graphics}



\def\Xint#1{\mathchoice
  {\XXint\displaystyle\textstyle{#1}}%
  {\XXint\textstyle\scriptstyle{#1}}%
  {\XXint\scriptstyle\scriptscriptstyle{#1}}%
  {\XXint\scriptscriptstyle\scriptscriptstyle{#1}}%
  \!\int}
\def\XXint#1#2#3{{\setbox0=\hbox{$#1{#2#3}{\int}$}
  \vcenter{\hbox{$#2#3$}}\kern-.5\wd0}}
\def\-int{\Xint -}

\newcommand{\e}{\varepsilon}
\newcommand{\R}{\mathbb{R}}
\newcommand{\N}{\mathcal{N}}
\newcommand{\M}{\mathcal{M}}
\newcommand{\A}{\mathcal{A}}

\newcommand{\X}{\mathbb{X}}
\newcommand{\J}{\mathcal{J}}
\newcommand{\I}{\mathcal{I}}

\DeclareMathOperator{\supp}{supp}
\DeclareMathOperator{\B}{\mathcal{B}}

\DeclareMathOperator{\h}{\mathcal{W}_{\e}}
\newcommand{\p}{p^{*}_{s}}

\newtheorem{prop}{Proposition}[section]
\newtheorem{lem}{Lemma}[section]
\newtheorem{thm}{Theorem}[section]

\newtheorem{remark}{Remark}[section]

\begin{document}
\title[$p$-Fractional Choquard equation]{On the multiplicity and concentration of positive solutions for a $p$-fractional Choquard equation in $\R^{N}$}
\author[V. Ambrosio]{Vincenzo Ambrosio}
\address{Vincenzo Ambrosio\hfill\break\indent 
Dipartimento di Ingegneria Industriale e Scienze Matematiche \hfill\break\indent
Universit\`a Politecnica delle Marche\hfill\break\indent
Via Brecce Bianche, 12\hfill\break\indent
60131 Ancona (Italy)}
\email{ambrosio@dipmat.univpm.it}

\keywords{Fractional Choquard equation; fractional $p$-Laplacian; variational methods}
\subjclass[2010]{35A15, 35R11, 45G05}

\date{}

\begin{abstract}
In this paper we deal with the following fractional Choquard equation
\begin{equation*}
\left\{
\begin{array}{ll}
\varepsilon^{sp}(-\Delta)^{s}_{p} u + V(x)|u|^{p-2}u = \varepsilon^{\mu-N}\left(\frac{1}{|x|^{\mu}}*F(u)\right)f(u) \mbox{ in } \mathbb{R}^{N},\\
u\in W^{s,p}(\mathbb{R}^{N}), \quad u>0 \mbox{ in } \mathbb{R}^{N},
\end{array}
\right.
\end{equation*}
where $\varepsilon>0$ is a small parameter, $s\in (0, 1)$, $p\in (1, \infty)$, $N>sp$, $(-\Delta)^{s}_{p}$ is the fractional $p$-Laplacian, $V$ is a positive continuous potential, $0<\mu<sp$, and $f$ is a continuous superlinear function with subcritical growth.
Using minimax arguments and the Ljusternik-Schnirelmann category theory, we obtain the existence, multiplicity and concentration of positive solutions for $\varepsilon>0$ small enough.
\end{abstract}

\maketitle

\section{Introduction}

\noindent

In this paper we focus our attention on the following nonlinear fractional Choquard equation
\begin{equation}\label{P}
\left\{
\begin{array}{ll}
\e^{sp}(-\Delta)^{s}_{p} u + V(x)|u|^{p-2}u = \e^{\mu-N}\left(\frac{1}{|x|^{\mu}}*F(u)\right)f(u) \mbox{ in } \R^{N},\\
u\in W^{s,p}(\R^{N}), \quad u>0 \mbox{ in } \R^{N},
\end{array}
\right.
\end{equation}
where $\e>0$ is a parameter, $s\in (0,1)$, $p\in (1, \infty)$, $N> sp$, $0<\mu<sp$ and $V:\R^{N}\rightarrow \R$ and $f:\R\rightarrow \R$ are continuous functions. 
The fractional $p$-Laplacian operator $(-\Delta)^{s}_{p}$  is defined, up to normalization factors, for any $u:\R^{N}\rightarrow \R$ smooth enough
by
$$
(-\Delta)^{s}_{p}u(x)=2\lim_{r\rightarrow 0}\int_{\R^{N}\setminus \mathcal{B}_{r}(x)} \frac{|u(x)-u(y)|^{p-2}(u(x)-u(y))}{|x-y|^{N+sp}} dy \quad (x\in \R^{N}).
$$
The above operator is a nonlocal version of the classical $p$-Laplacian $\Delta_{p}$ and it is an extension of the fractional Laplacian $(-\Delta)^{s}$ (that is $p=2$); see \cite{DPV, KP} for more details.

When $s=1$, $p=2$, $V(x)\equiv 1$, $\e=1$ and $F(u)=\frac{|u|^{2}}{2}$, we can see that \eqref{P} reduces to the Choquard-Pekar equation
\begin{equation}\label{CE}
-\Delta u + u = \left(\frac{1}{|x|^{\mu}}*|u|^{2}\right)u \mbox{ in } \R^{N}
\end{equation}
which goes back to the description of a polaron at rest in Quantum Field Theory by Pekar \cite{Pek} in 1954.  
In particular, when $u$ is a solution to \eqref{CE}, we can see that $\psi(x, t)=u(x)e^{-\imath t}$ is a solitary wave of the following Hartree equation
$$
\imath \frac{\partial \psi}{\partial t}=-\Delta \psi-\left(\frac{1}{|x|^{\mu}}*|\psi|^{2}\right)\psi \mbox{ in } \R^{3}\times \R_{+},
$$
introduced by Choquard in 1976 to describe an electron trapped in its own hole as approximation to Hartree-Fock Theory of one component plasma; see \cite{LL, Pen}.

From a mathematical point of view, equation \eqref{CE} and its generalizations have been widely investigated.
Lieb \cite{Lieb} proved the existence and  uniqueness, up to translations, of the ground state to \eqref{CE}. Lions \cite{Lions} obtained the existence of a sequence of radially symmetric solutions via critical point theory. Ma and Zhao \cite{MZ} showed the symmetry of positive solutions for a generalized Choquard equation. Moroz and Van Shaftingen \cite{MVS1} established regularity, radial symmetry and asymptotic behavior at infinity of positive solutions. For other interesting results on Choquard equations we refer to \cite{Ack, AY2, GY, MVS2, WW} and the survey \cite{MVS3}.
\noindent

Recently, the study of problems involving fractional and nonlocal operators has received a great interest in view of concrete real-world applications, such as phase transitions, anomalous diffusion, population dynamics, optimization, finance, conservation laws, ultra-relativistic limits of quantum mechanics, quasi-geostrophic flows and many others; see  \cite{DPV, MBRS}.
In particular, when $p=2$ in \eqref{P}, a large number of papers have been devoted to the study of fractional Schr\"odinger equations \cite{Laskin1} involving local nonlinearities; see for instance \cite{AM, A3, AH, DMV, FQT, Secchi1} and the references therein.\\
However, in the literature there are only few papers dealing with fractional Schr\"odinger equations like \eqref{P} in which the nonlocal term appears also in the nonlinearity. 
Frank and Lenzmann \cite{FL} proved analyticity and radial symmetry of positive ground state for the $L^{2}$ critical boson star equation
$$
(-\Delta)^{\frac{1}{2}}u+u=\left(\frac{1}{|x|^{\mu}}*|u|^{2}\right)u \mbox{ in } \R^{3}.
$$
d'Avenia et al. \cite{DSS} dealt with the regularity,  existence and non existence, symmetry and decay properties of solutions to
$$
(-\Delta)^{s}u+u=\left(\frac{1}{|x|^{\mu}}*|u|^{p}\right)|u|^{p-2}u \mbox{ in } \R^{N}.
$$
Shen et al. \cite{SGY} investigated the existence of ground state solutions for a fractional Choquard equation involving a general nonlinearity. 
In \cite{Apota} the author used penalization technique and  Ljusternik-Schnirelmann theory to study the multiplicity and concentration of positive solutions to 
$$
\e^{2s} (-\Delta)^{s}u+V(x) u=\e^{\mu-N}\left(\frac{1}{|x|^{\mu}}*F(u)\right)f(u) \mbox{ in } \R^{N}
$$
when the potential $V$ has a local minimum.\\
On the other hand,  in the last few years, a great attention has been focused on the study of fractional $p$-Laplacian problems. Indeed, from the mathematical point of view, the fractional $p$-Laplacian has taken relevance because two phenomena are present in it: the nonlinearity of the operator and its nonlocal character. 
For instance, fractional $p$-eigenvalue problems have been considered in \cite{FrP, LLq}, some interesting regularity results for weak solutions can be found in \cite{DCKP, IMS, P}, several existence and multiplicity results for problems set in bounded domains or in the whole of $\R^{N}$ have been established in \cite{A1, A4, AI, DPQna, FiP, MMB, PXZ1}, while $p$-fractional Choquard equations have been studied in \cite{BBMP, LR, PXZ2}.

Motivated by the above papers, in this work we aim to study the existence, multiplicity, and concentration of positive solutions for the fractional Choquard equation \eqref{P} assuming that 
the potential $V: \R^{N}\rightarrow \R$ verifies the following condition due to Rabinowitz \cite{Rab}:
\begin{compactenum}[$(V)$]
\item $V_{\infty}=\liminf_{|x|\rightarrow \infty} V(x)>V_{0}=\inf_{x\in \R^{N}} V(x)$,
\end{compactenum}
and the nonlinearity $f: \R\rightarrow \R$ is a continuous function satisfying the following hypotheses: 
\begin{compactenum}[($f_1$)]
\item there exist $C>0$ and $q_{1}, q_{2}>\frac{p}{2}$ with $\frac{p}{2}(2-\frac{\mu}{N})<q_{1}\leq q_{2}<\frac{\p}{2}(2-\frac{\mu}{N})$ such that 
$$
|f(t)|\leq C(|t|^{q_{1}-1}+|t|^{q_{2}-1}) \quad \forall t\in \R;
$$
\item there exists $\theta>p$ such that $0<\theta F(t)\leq 2f(t)t$ for all $t>0$, where $F(t)=\int_{0}^{t} f(\tau) d\tau$; 
\item $\displaystyle{t\mapsto \frac{f(t)}{t^{\frac{p}{2}-1}}}$ is increasing for $t>0$.
\end{compactenum}
The main result of this paper establishes  the existence of multiple positive solutions of (\ref{P}) involving the  Ljusternik-Schnirelmann category of the sets $M$ and $M_{\delta}$ defined as 
$$
M=\{x\in \Lambda: V(x)=V_{0}\} \mbox{ and } M_{\delta}=\{x\in \R^{N}: dist(x, M)\leq \delta\}, \mbox{ for } \delta>0.
$$
We recall that if $Y$ is a given closed set of a topological space $X$, we denote by $cat_{X}(Y)$ the Ljusternik-Schnirelmann category of $Y$ in $X$, that is the least number of closed and contractible sets in $X$ which cover $Y$; see \cite{MW, W}. 
More precisely, we are able to prove that
\begin{thm}\label{thmA1}
Suppose that $V$ verifies $(V)$,  $0<\mu<sp$ and $f$ satisfies $(f_1)$-$(f_3)$ with 
$$
p<q_{1}\leq q_{2}<\frac{p(N-\mu)}{N-sp}.
$$
Then, for any $\delta>0$, there exists $\e_{\delta}>0$ such that, for any $\e\in (0, \e_{\delta})$, problem \eqref{P} has at least $cat_{M_{\delta}}(M)$ positive solutions. Moreover, if $u_{\e}$ denotes one of these positive solutions and $x_{\e}\in \R^{N}$ its global maximum, then 
$$
\lim_{\e\rightarrow 0} V(x_{\e})=V_{0},
$$
and there exists $C>0$ such that
$$
0<u_{\e}(x)\leq \frac{C\e^{N+sp}}{\e^{N+sp}+|x-x_{\e}|^{N+sp}} \quad \forall x\in \R^{N}.
$$
\end{thm}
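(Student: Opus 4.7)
The plan is to combine a del Pino--Felmer type penalization of the nonlinearity with the Benci--Cerami barycenter/Ljusternik--Schnirelmann machinery, in the spirit of \cite{Apota} but carried out for the fractional $p$-Laplacian and the Choquard convolution. After rescaling $v(x)=u(\e x)$, \eqref{P} turns into
\[
(-\Delta)^{s}_{p}v+V(\e x)|v|^{p-2}v=\Bigl(\frac{1}{|x|^{\mu}}*F(v)\Bigr)f(v) \quad \text{in } \R^{N},
\]
whose natural functional lives in the space $\h$ obtained by completing $C^{\infty}_{c}(\R^{N})$ with respect to the Gagliardo seminorm plus the weighted $L^{p}$-norm with weight $V(\e x)$. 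The subcriticality condition $p<q_{1}\leq q_{2}<p(N-\mu)/(N-sp)$, joined with the Hardy--Littlewood--Sobolev inequality, makes the Choquard term well defined and weakly continuous on bounded subsets of $W^{s,p}(\R^{N})$.

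Because $(V)$ is only a Rabinowitz-type local condition, I would fix a bounded $\Lambda\supset M$ with $V|_{\partial\Lambda}>V_{0}$, pick $k>\theta/(\theta-p)$ and replace $f$ outside $\Lambda$ by a truncation $\tilde f(x,t)=\chi_{\Lambda}(x)f(t)+(1-\chi_{\Lambda}(x))\tilde f_{0}(t)$ with $\tilde f_{0}(t)t\leq V_{0}|t|^{p}/k$. The modified functional $J_{\e}$ satisfies a global Palais--Smale condition (thanks to $(f_{2})$ and the truncation), admits a Nehari manifold $\mathcal N_{\e}$ by $(f_{3})$, and its mountain pass level $c_{\e}$ coincides with $\inf_{\mathcal N_{\e}}J_{\e}$. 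Any critical point of $J_{\e}$ which remains uniformly small on $\Lambda^{c}$ is a genuine solution of \eqref{P}.

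Next I would analyze the autonomous limit equation with potential $V_{0}$: via mountain pass plus a Lions-type concentration-compactness argument adapted to the nonlocal convolution, one obtains a positive ground state $w$ at energy $m_{V_{0}}$, and an elementary comparison gives $\liminf_{\e\to 0}c_{\e}\geq m_{V_{0}}$, together with $\limsup_{\e\to 0}c_{\e}\leq m_{V_{0}}$ by testing with cut-off translates of $w$. Using these cut-offs I would then construct a map $\Phi_{\e}:M\to \mathcal N_{\e}$ with $J_{\e}(\Phi_{\e}(y))\to m_{V_{0}}$ uniformly in $y\in M$, and a barycenter map $\beta_{\e}:\mathcal N_{\e}\cap\{J_{\e}\leq m_{V_{0}}+h(\e)\}\to M_{\delta}$ such that $\beta_{\e}\circ\Phi_{\e}$ is homotopic to the inclusion $M\hookrightarrow M_{\delta}$. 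The abstract Ljusternik--Schnirelmann theorem then yields at least $\text{cat}_{M_{\delta}}(M)$ critical points of $J_{\e}$ at energies just above $m_{V_{0}}$.

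The last stage is to convert these into solutions of \eqref{P}. A Moser iteration tailored to the fractional $p$-Laplacian, together with a $(f_{1})$--$(f_{2})$ based growth control of the convolution term, gives a uniform $L^{\infty}$-bound and decay at infinity of the rescaled solutions $v_{\e}$; hence $v_{\e}\to 0$ outside $\Lambda/\e$, which removes the truncation and shows $u_{\e}$ solves the original problem. A concentration argument relying on the profile $w$ then forces the global maximum $x_{\e}$ to satisfy $V(x_{\e})\to V_{0}$. Finally, a barrier of the form $C/(1+|x|^{N+sp})$, compared against $(-\Delta)^{s}_{p}$ using its pointwise behaviour on power-type functions, yields the polynomial decay estimate. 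I expect the two genuine obstacles to be (i) establishing the Palais--Smale condition for $J_{\e}$ in the presence of the Choquard nonlinearity, which requires a nonlocal Brezis--Lieb type splitting for the Gagliardo seminorm combined with Hardy--Littlewood--Sobolev, and (ii) the uniform $L^{\infty}$-decay estimate for the fractional $p$-Laplacian, whose Moser iteration is significantly more delicate than in the $p=2$ case because of the lack of a linear structure.
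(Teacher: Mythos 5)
Your overall scaffold (rescale, Nehari manifold, autonomous limit at $V_{0}$, $\Phi_{\e}$ and $\beta_{\e}$ maps, homotopy to the inclusion $M\hookrightarrow M_{\delta}$, Ljusternik--Schnirelmann, Moser iteration plus barrier decay) matches the paper's architecture, but there is a conceptual misreading at the start that makes a core step of your plan unnecessary and mischaracterized. You describe $(V)$ as ``only a Rabinowitz-type local condition'' and therefore introduce a del Pino--Felmer truncation of $f$ outside a fixed neighbourhood $\Lambda$ of $M$. But $(V)$ is a \emph{global} condition: $V_{0}=\inf_{\R^{N}}V<\liminf_{|x|\to\infty}V=V_{\infty}$, so $M=\{V=V_{0}\}$ consists of global minima and is already compact. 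The paper exploits this by working directly with the untruncated $\J_{\e}$ and proving a compactness threshold: the Palais--Smale condition holds at levels $c<d_{V_{\infty}}$ (Lemma \ref{lem2.3} and Proposition \ref{prop2.1}), and one then shows $c_{\e}<d_{V_{\infty}}$ for small $\e$ by testing with cut-off translates of a ground state of $(P_{\mu})$ with $\mu\in(V_{0},V_{\infty})$. No penalization is required, and your claim that the truncated functional ``satisfies a global Palais--Smale condition'' is not automatic --- even in the genuine del Pino--Felmer setting one still has to fight for compactness at the relevant levels.

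A second gap is that you never address the fact that $f$ is only continuous, so $u\mapsto\langle\J_{\e}'(u),u\rangle$ is not $C^{1}$ and the Nehari manifold $\mathcal{N}_{\e}$ is not a differentiable manifold. This is precisely the technical obstruction that the paper handles with the Szulkin--Weth reduction: one proves that $m_{\e}:\mathbb{S}_{\e}\to\mathcal{N}_{\e}$ is a homeomorphism (Lemma \ref{SW2}), transfers the variational problem to $\Psi_{\e}=\J_{\e}\circ m_{\e}$ on the $C^{1}$ manifold $\mathbb{S}_{\e}$ (Lemma \ref{SW3}), and only then applies the Ljusternik--Schnirelmann theorem \ref{LSt}. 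Without this step you cannot legitimately invoke an abstract category theorem on $\mathcal{N}_{\e}$. The remaining ingredients you list --- a Brezis--Lieb splitting for the Choquard term (Lemma \ref{BLlem}), a nonlinear Brezis--Lieb splitting for $[\cdot]_{s,p}$ and $\mathcal{A}(\cdot)$ (Lemmas \ref{lemPSY}, \ref{lemVince}), uniform $L^{\infty}$ via Moser iteration combined with $L^{\infty}$-control of the convolution kernel $K(u_{n})$ (Lemma \ref{lemK}), and the power-type barrier from \cite{DPQna} for the decay --- are all present in the paper and are indeed the delicate parts you correctly anticipate; your plan there is in line with the paper's proof.
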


\noindent
We note that the restriction on the exponents $q_1$ and $q_2$ is justified by the following Hardy-Littlewood-Sobolev inequality: 
\begin{thm}\label{HLS}\cite{LL}
Let $r, t>1$ and $0<\mu<N$ such that $\frac{1}{r}+\frac{\mu}{N}+\frac{1}{t}=2$. Let $f\in L^{r}(\R^{N})$ and $h\in L^{t}(\R^{N})$. Then there exists a sharp constant $C(r, N, \mu, t)>0$ independent of $f$ and $h$ such that 
$$
\int_{\R^{N}}\int_{\R^{N}} \frac{f(x)h(y)}{|x-y|^{\mu}}\, dx dy\leq C(r, N, \mu, t)|f|_{r}|h|_{t}.
$$
\end{thm}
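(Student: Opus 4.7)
The plan is to reduce the two-variable inequality to a one-variable mapping property of the Riesz potential and then establish it via Hedberg's pointwise inequality combined with the Hardy--Littlewood maximal theorem. Assume without loss of generality that $f,h\geq 0$, and define
$$ I_{N-\mu} h(x) := \int_{\R^N} \frac{h(y)}{|x-y|^\mu}\, dy. $$
By Tonelli's theorem, the left-hand side of the stated inequality equals $\int_{\R^N} f(x)\, I_{N-\mu} h(x)\, dx$, and H\"older's inequality reduces the problem to proving the one-variable bound $|I_{N-\mu} h|_{r'}\leq C\, |h|_t$, where $r'=r/(r-1)$. A short computation starting from the hypothesis $\frac{1}{r}+\frac{\mu}{N}+\frac{1}{t}=2$ gives $\frac{1}{t}-\frac{1}{r'} = \frac{N-\mu}{N}$, which is precisely the Sobolev scaling associated with a Riesz potential of order $N-\mu$.

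To prove this mapping property, normalize $|h|_t=1$ and, for each $x\in\R^N$ and each $R>0$, split
$$ I_{N-\mu}h(x) = \int_{|x-y|\leq R} \frac{h(y)}{|x-y|^\mu}\, dy + \int_{|x-y|>R}\frac{h(y)}{|x-y|^\mu}\, dy =: A_R(x) + B_R(x). $$
A dyadic decomposition into annuli $\{2^{-k-1}R<|x-y|\leq 2^{-k}R\}$ yields $A_R(x)\leq C\, R^{N-\mu}\, Mh(x)$, where $Mh$ denotes the Hardy--Littlewood maximal function; H\"older's inequality, combined with the convergence of $\int_{|z|>R}|z|^{-\mu t'}\,dz$ (guaranteed by the inequality $\mu t'>N$ that the exponent hypothesis enforces), gives $B_R(x)\leq C\, R^{-N/r'}$. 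Balancing the two bounds by choosing $R=(Mh(x))^{-t/N}$ produces Hedberg's pointwise inequality
$$ I_{N-\mu}h(x)\leq C\, (Mh(x))^{t/r'}. $$
Raising to the power $r'$, integrating over $\R^N$, and invoking the strong $(t,t)$ boundedness of $M$ (valid since $t>1$), one obtains $|I_{N-\mu}h|_{r'}\leq C\, |h|_t$, and combining with H\"older's inequality yields the bilinear estimate.

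The main technical point is the exponent book-keeping in the Hedberg optimization, which is what forces the precise scaling hypothesis $\frac{1}{r}+\frac{\mu}{N}+\frac{1}{t}=2$; the conditions $r,t>1$ and $0<\mu<N$ are exactly those needed to make the maximal inequality available and to ensure local and far-field integrability of the kernel $|x-y|^{-\mu}$. The sharp value of the constant $C(r,N,\mu,t)$ requires more delicate arguments based on symmetric-decreasing (Riesz) rearrangement and conformal invariance, following Lieb, but such sharpness is not needed for the qualitative form stated here.
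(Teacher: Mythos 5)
Your proof is correct. Note first that the paper itself gives no proof of this statement: it is quoted, with citation, from Lieb--Loss \cite{LL}, so the only comparison available is with that source. Your route — reduce by Tonelli and H\"older to the one-variable bound $|I_{N-\mu}h|_{r'}\le C|h|_{t}$ with $r'=r/(r-1)$, then prove that bound by the Hedberg splitting $A_R+B_R$, the dyadic estimate $A_R(x)\le C R^{N-\mu}Mh(x)$, the tail estimate $B_R(x)\le C R^{-N/r'}$ (valid because the scaling hypothesis forces $\mu t'>N$), the optimization $R=(Mh(x))^{-t/N}$, and finally the strong $(t,t)$ maximal inequality — is the standard maximal-function proof, and the exponent bookkeeping checks out: $\frac1t-\frac1{r'}=\frac{N-\mu}{N}$, both branches of the splitting balance to $(Mh(x))^{t/r'}$, and the hypotheses $r,t>1$ give exactly $r'<\infty$, $t'<\infty$, and the strict inequality $\frac1t>\frac{N-\mu}{N}$ needed for the far-field integral. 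This is genuinely different from the argument in \cite{LL}, which proceeds via the layer-cake representation and symmetric-decreasing rearrangement; that route is what produces the sharp constant and the optimizers in the conformal case, whereas yours is more elementary, needs only the Hardy--Littlewood maximal theorem, and yields a finite (non-explicit) constant quickly. Your closing remark settles the only possible quibble: once the inequality holds with some finite constant, the best constant exists and the inequality holds with it, so the statement as phrased — which asserts only the existence of a sharp constant — is fully covered, and nowhere in the paper is the explicit value of that constant used.
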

\noindent
Indeed, when we consider the model case $F(u)=|u|^{q}$, we can see that 
$$
\int_{\R^{N}} \left(\frac{1}{|x|^{\mu}}*F(u)\right) F(u)\, dx
$$
is well-defined if $F(u)\in L^{t}(\R^{N})$ for $t>1$ such that $\frac{2}{t}+\frac{\mu}{N}=2$. Then, using the fractional Sobolev embedding $W^{s,p}(\R^{N})\subset L^{r}(\R^{N})$ for all $r\in [p, \p]$, we have to require that $tq\in [p, \p]$ which together with the fact that we are considering the subcritical case, forces to suppose that 
$$
\frac{p}{2}\left(2-\frac{\mu}{N}\right)< q< \frac{\p}{2}\left(2-\frac{\mu}{N}\right).
$$
Here, we only consider the case $q>p$.

The proof of Theorem \ref{thmA1} is obtained applying suitable variational methods. 
We would like to note that our result improves and extends Theorem 1.4 in \cite{AY2} in the fractional setting because here we assume that $f$ is only continuous.
Indeed, differently from \cite{AY2}, we cannot apply standard Nehari manifolds arguments to study \eqref{P} due to the fact that the Nehari manifold associated with \eqref{P} is not  differentiable. To overcome this difficulty, we use some variants of critical point theorems due to Szulkin and Weth \cite{SW}; see Sections $3$ and $4$.
We also emphasized that the presence of the fractional $p$-Laplacian operator and the convolution term, both nonlocal operators, make our study more complicated with respect to \cite{AY2}, and a more accurate inspection will be done; see Section $2$ for some useful technical results. 

Indeed, after proving that the levels of compactness are strongly related to the behavior of the potential $V(x)$ at infinity (see Proposition \ref{prop2.1}), we are able to deduce the existence of a ground state solution for  \eqref{P} provided that $\e>0$ is sufficiently small.
Then, we obtain multiple solutions by using a technique due to Benci and Cerami \cite{BC}. The main ingredient is to make precisely comparisons between the category of some sublevel sets of the energy functional associated with \eqref{P} and the category of the set $M$.
We also investigate the concentration of positive solutions $u_{\e}$ of \eqref{P}. More precisely, we combine a Moser iteration technique \cite{Moser} with the H\"older regularity results obtained for $(-\Delta)^{s}_{p}$ (see \cite{DCKP, IMS}) to deduce that $u_{\e}(x)$ decays at zero as $|x|\rightarrow \infty$ uniformly in $\e$. This information together with the continuity of $V$ will be fundamental to infer that $u_{\e}$ concentrates around global minimum of the potential $V$ as $\e \rightarrow 0$.  Moreover, arguing as in \cite{AI, BBMP} and using the recent result in \cite{DPQna}, we find out that the solutions of $\eqref{P}$ have a power-type decay at infinity; see at the end of Section $6$. \\
As far as we know, there are no results in the literature concerning the multiplicity and concentration of positive  solutions to \eqref{P} using the Ljusternik-Schnirelmann category theory, so the goal of the present paper is to give a first result in this direction.

\medskip
\noindent
The body of the paper is the following. In Section $2$ we collect some lemmas which will be useful along the paper. In Section $3$ we outline the variational framework for studying \eqref{P}. Section $4$ is devoted to the study of the autonomous problem associated with \eqref{P}. In Section $5$ we provide a first existence result for \eqref{P}. The last section focuses on the multiplicity and concentration of solutions to \eqref{P}.

Before concluding this introduction, we would like to recall that using the change of variable  $u(x)\mapsto u(\e x)$ we can see that problem (\ref{P}) is equivalent to the following one
\begin{equation}\label{R}
\left\{
\begin{array}{ll}
(-\Delta)^{s}_{p} u + V(\e x)|u|^{p-2}u =  \left(\frac{1}{|x|^{\mu}}*F(u)\right)f(u)  \mbox{ in } \R^{N},  \\
u\in W^{s,p}(\R^{N}), \quad u>0 \mbox{ in } \R^{N},
\end{array}
\right.
\end{equation}
which will be considered in the next sections.

\section{Preliminaries}

In this section we introduce the notations and we give some lemmas which we will use later.

Fix $s\in (0,1)$ and $p\in (1, \infty)$. We denote by $\mathcal{D}^{s, p}(\R^{N})$ the completion of $C^{\infty}_{c}(\R^{N})$ with respect to the Gagliardo seminorm
$$
[u]^{p}_{s,p}=\iint_{\R^{2N}} \frac{|u(x)-u(y)|^{p}}{|x-y|^{N+sp}} \, dx \, dy,
$$
or equivalently
$$
\mathcal{D}^{s, p}(\R^{N})=\left\{u\in L^{p^{*}_{s}}(\R^{N}): [u]_{s,p}<\infty\right\}.
$$
Let us define the fractional Sobolev space
$$
W^{s,p}(\R^{N})= \left\{u\in L^{p}(\R^{N}) : \frac{|u(x)-u(y)|}{|x-y|^{\frac{N+sp}{p}}} \in L^{p}(\R^{2N}) \right \}
$$
endowed with the natural norm 
$$
\|u\|^{p}_{s,p} = [u]^{p}_{s,p} + |u|_{p}^{p}.
$$

\noindent
For reader's convenience, we recall the following fundamental embeddings:
\begin{thm}\cite{DPV}\label{Sembedding}
Let $s\in (0,1)$ and $p\in (1, \infty)$ be such that $N>sp$. Then there exists a constant $S_{*}=S(N, s, p)>0$
such that for any $u\in \mathcal{D}^{s,p}(\R^{N})$
\begin{equation}\label{FSI}
|u|^{p}_{\p} \leq S_{*} [u]^{p}_{s,p}. 
\end{equation}
Moreover, $W^{s,p}(\R^{N})$ is continuously embedded in $L^{q}(\R^{N})$ for any $q\in [p, p^{*}_{s}]$ and compactly in $L^{q}_{loc}(\R^{N})$ for any $q\in [p, p^{*}_{s})$. 
\end{thm}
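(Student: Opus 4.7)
The statement has two parts: the global Gagliardo--Sobolev inequality \eqref{FSI} on $\mathcal{D}^{s,p}(\R^{N})$, and the continuous/compact embedding statements for $W^{s,p}(\R^{N})$. I would prove them in that order, since the second essentially reduces to the first together with interpolation and a standard compactness criterion.

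For \eqref{FSI}, the plan is to work with $u\in C^{\infty}_{c}(\R^{N})$ and then invoke density. I would use a Hedberg-type truncation argument: writing
$$
u(x)=\sum_{k\in\mathbb{Z}}\Bigl(\fint_{B_{2^{k}}(x)}u - \fint_{B_{2^{k+1}}(x)}u\Bigr),
$$
(the series telescoping to $u(x)$ by Lebesgue differentiation as $k\to-\infty$ and to $0$ as $k\to+\infty$), bounding each increment by an annular average of $|u(x)-u(y)|$, applying H\"older's inequality on each annulus, and summing in $k$. Optimizing the split between ``small'' and ``large'' scales produces a pointwise estimate of the form
$$
|u(x)|\le C\,(Mu(x))^{1-sp/N}\,[u]_{s,p}^{sp/N},
$$
where $M$ denotes the Hardy--Littlewood maximal operator. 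Raising to the power $\p=\frac{Np}{N-sp}$, integrating over $\R^{N}$, and invoking the $L^{p}\to L^{p}$ boundedness of $M$ yields \eqref{FSI} with a constant depending only on $N,s,p$.

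The continuous embedding $W^{s,p}(\R^{N})\hookrightarrow L^{q}(\R^{N})$ is immediate for $q=p$ from the definition of the norm, and for $q=\p$ it follows from \eqref{FSI} applied to $u\in W^{s,p}(\R^{N})\subset \mathcal{D}^{s,p}(\R^{N})$. For intermediate exponents $q\in(p,\p)$ one interpolates via H\"older's inequality. For the compact embedding into $L^{q}_{\text{loc}}$ with $q\in[p,\p)$, I would apply the Riesz--Fr\'echet--Kolmogorov criterion to a bounded sequence $(u_{n})\subset W^{s,p}(\R^{N})$: on any bounded $\Omega\subset\R^{N}$, equicontinuity in $L^{p}$ follows from the elementary estimate
$$
\int_{\Omega}|u_{n}(x+h)-u_{n}(x)|^{p}\,dx \le |h|^{sp}\,[u_{n}]_{s,p}^{p}
$$
obtained by factoring $|x-y|^{N+sp}$ through a change of variables, and then equicontinuity in $L^{q}$ for $q<\p$ is obtained by interpolating this $L^{p}$ bound against the uniform $L^{\p}$ bound from \eqref{FSI}. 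Combined with the trivial tightness on bounded domains, this yields relative compactness in $L^{q}(\Omega)$.

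The main obstacle is the inequality \eqref{FSI} itself in the non-Hilbert regime $p\ne 2$: the Fourier-analytic shortcut through Plancherel and the Hardy--Littlewood--Sobolev inequality (Theorem \ref{HLS}) is unavailable, so the real-variable argument above must be executed carefully, in particular to justify the pointwise maximal-function bound for rough $u$ and to control the interaction between the nonlocal Gagliardo seminorm and the dyadic annular decomposition. The remaining steps (interpolation, Riesz--Fr\'echet--Kolmogorov) are standard once \eqref{FSI} is in hand.
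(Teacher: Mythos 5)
Your treatment of the embeddings (interpolation for the intermediate exponents $q\in(p,\p)$, Riesz--Fr\'echet--Kolmogorov for local compactness) is reasonable in outline, although the translation bound $\int_{\Omega}|u(\cdot+h)-u|^{p}\,dx\lesssim|h|^{sp}[u]_{s,p}^{p}$ actually requires a short chaining argument through ball averages rather than a bare change of variables, since $[u]_{s,p}^{p}$ is an integral of $\|u(\cdot+h')-u\|_{p}^{p}/|h'|^{N+sp}$ over all $h'$ and does not by itself give a pointwise-in-$h$ modulus. The serious gap, however, is in your proof of \eqref{FSI}. The pointwise Hedberg estimate you announce, $|u(x)|\le C\,(Mu(x))^{1-sp/N}[u]_{s,p}^{sp/N}$ with $M$ applied to $u$ itself, does not yield the Sobolev inequality. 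Raising it to the power $\p$, using $(1-sp/N)\p=p$, and then integrating with $\int(Mu)^{p}\lesssim|u|_{p}^{p}$ gives $|u|_{\p}^{\p}\le C\,[u]_{s,p}^{sp^{2}/(N-sp)}\,|u|_{p}^{p}$, i.e.\ the Gagliardo--Nirenberg interpolation $|u|_{\p}\le C\,[u]_{s,p}^{sp/N}|u|_{p}^{1-sp/N}$, which is \emph{not} \eqref{FSI}: the factor $|u|_{p}$ survives on the right, and on $\mathcal{D}^{s,p}(\R^{N})$ it is not controlled by $[u]_{s,p}$ (dilations $u(\lambda\cdot)$, or any nonzero constant, rule out such a bound). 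In the genuine Hedberg argument the maximal operator is applied to the \emph{gradient}, not to $u$, and the step $\int(M|\nabla u|)^{p}\lesssim\int|\nabla u|^{p}$ is what closes the estimate; for $p\ne 2$ the Gagliardo seminorm has no clean representation as an $L^{p}$ norm of a local gradient, so that substitute is not available as written.

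The dyadic decomposition can nevertheless be made to work, but with different competing bounds and \emph{no} maximal function at all. Write $u_{B}$ for the average of $u$ over a ball $B$, and set $g(x):=\bigl(\int_{\R^{N}}\frac{|u(x)-u(y)|^{p}}{|x-y|^{N+sp}}\,dy\bigr)^{1/p}$, so that $\int_{\R^{N}}g^{p}\,dx=[u]_{s,p}^{p}$ exactly. H\"older over the single ball $B_{2^{k+1}}(x)$ gives $|u_{B_{2^{k}}(x)}-u_{B_{2^{k+1}}(x)}|\le C\,2^{ks}g(x)$ for every $k$, while H\"older over the product $B_{2^{k+1}}(x)\times B_{2^{k+1}}(x)$ gives the competing bound $|u_{B_{2^{k}}(x)}-u_{B_{2^{k+1}}(x)}|\le C\,2^{k(sp-N)/p}[u]_{s,p}$. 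Since $s>0$ and $sp<N$, both geometric sums converge, and optimizing the crossover $k_{0}$ gives the correct pointwise inequality $|u(x)|\le C\,g(x)^{1-sp/N}[u]_{s,p}^{sp/N}$. Raising to $\p$ and integrating, using $\int g^{p}=[u]_{s,p}^{p}$ and $sp^{2}/(N-sp)+p=\p$, yields $|u|_{\p}^{\p}\le C[u]_{s,p}^{\p}$, which is \eqref{FSI}. This corrected real-variable route is genuinely different from the proof in \cite{DPV}, which establishes the inequality via a measure-theoretic argument on dyadic cubes and the distribution function, with no pointwise Hedberg-type bound.
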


\noindent
We also have the following Lions-type lemma.
\begin{lem}\label{Lions}
Let $N>sp$. If $(u_{n})$ is a bounded sequence in $W^{s,p}(\R^{N})$ and if
$$
\lim_{n \rightarrow \infty} \sup_{y\in \R^{N}} \int_{\B_{R}(y)} |u_{n}|^{p} dx=0
$$
where $R>0$,
then $u_{n}\rightarrow 0$ in $L^{t}(\R^{N})$ for all $t\in (p, p^{*}_{s})$.
\end{lem}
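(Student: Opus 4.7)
The plan is to first establish $u_{n}\to 0$ in $L^{t_{0}}(\R^{N})$ for a single carefully chosen intermediate exponent $t_{0}\in(p,\p)$, and then extend to the full range $(p,\p)$ by $L^{q}$-interpolation against the endpoints $p$ and $\p$, both of which are controlled by Theorem~\ref{Sembedding}.

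I take $t_{0}:=p\bigl(1+\frac{sp}{N}\bigr)$, which lies in $(p,\p)$ since $N>sp$. Setting $a:=\frac{sp}{N}$ and $b:=\frac{N-sp}{N}$, one has $a+b=1$, $ap+b\p=t_{0}$, and in particular $b\p=p$. H\"older's inequality applied pointwise to $|u|^{t_{0}}=(|u|^{p})^{a}(|u|^{\p})^{b}$ with conjugate exponents $1/a$ and $1/b$, combined with the fractional Sobolev embedding on the Lipschitz ball $W^{s,p}(\B_{R}(y))\hookrightarrow L^{\p}(\B_{R}(y))$ (a local consequence of Theorem~\ref{Sembedding} via extension), yields
$$
\int_{\B_{R}(y)}|u|^{t_{0}}\,dx \leq C \left(\int_{\B_{R}(y)}|u|^{p}\,dx\right)^{\!a}\|u\|_{W^{s,p}(\B_{R}(y))}^{p}.
$$
Cover $\R^{N}$ by balls $\{\B_{R}(y_{i})\}_{i}$ of uniformly finite overlap $K$ and set $\eta_{n}:=\sup_{y\in\R^{N}}\int_{\B_{R}(y)}|u_{n}|^{p}\,dx$. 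The first factor above is bounded uniformly in $i$ by $\eta_{n}^{a}$, so summing over $i$ together with the finite-overlap estimate $\sum_{i}\|u_{n}\|_{W^{s,p}(\B_{R}(y_{i}))}^{p}\leq K\,\|u_{n}\|_{s,p}^{p}$ produces $|u_{n}|_{t_{0}}^{t_{0}}\leq CK\,\eta_{n}^{a}\,\|u_{n}\|_{s,p}^{p}$, which tends to $0$ since $(u_{n})$ is bounded in $W^{s,p}(\R^{N})$ and $\eta_{n}\to 0$ by hypothesis.

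For arbitrary $t\in(p,\p)$, I close the argument by the standard H\"older interpolation $|u_{n}|_{t}\leq|u_{n}|_{q_{1}}^{\lambda}|u_{n}|_{q_{2}}^{1-\lambda}$ with $\frac{1}{t}=\frac{\lambda}{q_{1}}+\frac{1-\lambda}{q_{2}}$: take $(q_{1},q_{2})=(p,t_{0})$ for $t\in(p,t_{0}]$, using $L^{p}$-boundedness of $(u_{n})$ from $W^{s,p}\hookrightarrow L^{p}$; take $(q_{1},q_{2})=(t_{0},\p)$ for $t\in[t_{0},\p)$, using $L^{\p}$-boundedness from Theorem~\ref{Sembedding}. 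The main technical obstacle, peculiar to the non-local setting, is the finite-overlap bound for the Gagliardo seminorm invoked above: if $x,z\in\B_{R}(y_{i})$ then $|x-z|<2R$, and any such pair belongs to at most $K$ of the ball-squares $\B_{R}(y_{i})\times\B_{R}(y_{i})$, so
$$
\sum_{i}[u]_{s,p,\B_{R}(y_{i})}^{p}\leq K\iint_{|x-z|<2R}\frac{|u(x)-u(z)|^{p}}{|x-z|^{N+sp}}\,dx\,dz\leq K\,[u]_{s,p}^{p}.
$$
This localisation of the non-local energy is what legitimises the summation step and makes the whole argument go through.
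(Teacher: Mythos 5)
Your proposal is correct and follows essentially the same route as the paper: interpolate $L^{t_0}$ between $L^p$ and $L^{\p}$ on each ball, sum over a finite-overlap cover to force $|u_n|_{t_0}\to 0$, then extend to all $t\in(p,\p)$ by a second interpolation against the bounded endpoints. Your exponent $t_0=p(1+\frac{sp}{N})$ is exactly the value for which $b\p=p$, i.e.\ the choice implicit in the paper's computation that makes the fractional Sobolev norms sum cleanly, and your explicit finite-overlap bound for the localized Gagliardo seminorm is a correct and slightly more careful version of the summation step the paper carries out.
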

\begin{proof}
Let $\tau \in (r, \p)$. By the H\"older and Sobolev inequality we have for all $n\in \mathbb{N}$
\begin{align*}
|u_{n}|_{L^{\tau}(\B_{R}(y))} &\leq |u_{n}|_{L^{r}(\B_{R}(y))}^{1-\alpha} |u_{n}|_{L^{\p}(\B_{R}(y))}^{\alpha} \\
&\leq C |u_{n}|_{L^{r}(\B_{R}(y))}^{1- \alpha} \|u_{n}\|_{s,p}^{\alpha}
\end{align*}
where $\alpha= \frac{\tau-r}{\p-r}\frac{\p}{\tau}$.
Now, covering $\R^{N}$ by balls of radius $R$, in such a way that each point of $\R^{N}$ is contained in at most $N+1$ balls, we find
\begin{align*}
|u_{n}|_{\tau}^{\tau} \leq C \sup_{y\in \R^{N}} \left(\int_{\B_{R}(y)} |u_{n}|^{r} dx \right)^{(1-\alpha)\tau} \|u_{n}\|^{\alpha \tau}_{s,p}.
\end{align*}
From the assumption and the boundedness of $(u_{n})$ in $W^{s, p}(\R^{N})$, we obtain that $u_{n}\rightarrow 0$ in $L^{\tau}(\R^{N})$. Using an interpolation argument we get the thesis.
\end{proof}

Next, we collect some technical results which will be very useful along the paper.
\begin{lem}\label{pp}
Let $u\in W^{s, p}(\R^{N})$ and $\phi\in C^{\infty}_{c}(\R^{N})$ be such that $0\leq \phi\leq 1$, $\phi=1$ in $\B_{1}(0)$ and $\phi=0$ in $\R^{N} \setminus \B_{2}(0)$. Set $\phi_{r}(x)=\phi(\frac{x}{r})$.  Then
$$
\lim_{r\rightarrow \infty} [u \phi_{r}-u]_{s, p}=0 \quad \mbox{ and } \quad \lim_{r\rightarrow \infty} |u\phi_{r}-u|_{p}=0.
$$
\end{lem}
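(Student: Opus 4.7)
The plan is to handle the two limits separately, with the $L^{p}$ part being a routine dominated-convergence argument and the seminorm part requiring a suitable decomposition plus a classical ``near/far from the diagonal'' estimate.

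For the $L^p$ convergence, I would note that $u\phi_r - u = u(\phi_r - 1)$, that $|\phi_r(x) - 1| \le 1$ for all $x$, and that $\phi_r(x) \to 1$ pointwise as $r \to \infty$ (since any fixed $x$ eventually lies in $\mathcal{B}_r(0)$). Then $|u(\phi_r - 1)|^p \le |u|^p \in L^1(\R^N)$, so the Lebesgue dominated convergence theorem gives $|u\phi_r - u|_p \to 0$.

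For the Gagliardo seminorm, the key step is the algebraic identity
\[
(u\phi_r - u)(x) - (u\phi_r - u)(y) = (\phi_r(x) - 1)\bigl(u(x) - u(y)\bigr) + u(y)\bigl(\phi_r(x) - \phi_r(y)\bigr),
\]
which, combined with $|a+b|^p \le 2^{p-1}(|a|^p + |b|^p)$, reduces matters to controlling
\[
A_r = \iint_{\R^{2N}} \frac{|\phi_r(x) - 1|^p\, |u(x) - u(y)|^p}{|x-y|^{N+sp}}\, dx\, dy, \qquad B_r = \iint_{\R^{2N}} \frac{|u(y)|^p\, |\phi_r(x) - \phi_r(y)|^p}{|x-y|^{N+sp}}\, dx\, dy.
\]
The term $A_r$ vanishes by dominated convergence, since its integrand is dominated by $|u(x)-u(y)|^p/|x-y|^{N+sp} \in L^1(\R^{2N})$ (as $u \in W^{s,p}(\R^N)$) and tends to zero pointwise.

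The main obstacle is the term $B_r$, which I would split via $|x-y| \le r$ versus $|x-y| > r$. On the near-diagonal piece I use the mean value bound $|\phi_r(x) - \phi_r(y)| \le \|\nabla \phi\|_\infty r^{-1} |x-y|$, so that after passing to the variable $z = x - y$ the inner integral becomes $\int_{|z|\le r} |z|^{p - N - sp}\, dz = C\, r^{p(1-s)}$; multiplying by $|u|_p^p$ and the factor $r^{-p}$ yields a contribution of order $r^{-sp}$. On the far-diagonal piece I use the trivial bound $|\phi_r(x) - \phi_r(y)| \le 2$, obtaining $\int_{|z|>r} |z|^{-N-sp}\, dz = C\, r^{-sp}$, which multiplied by $|u|_p^p$ again gives $O(r^{-sp})$. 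Both contributions vanish as $r \to \infty$, so $B_r \to 0$, which completes the proof.
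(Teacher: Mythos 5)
Your proof is correct and follows essentially the same route as the paper: both handle the $L^p$ piece by dominated convergence, decompose the seminorm into the same two integrals $A_r$ and $B_r$, dispatch $A_r$ by dominated convergence, and control $B_r$ by splitting at $|x-y|=r$ and using the Lipschitz bound $|\nabla\phi_r|\lesssim r^{-1}$ near the diagonal and boundedness far from it, with both pieces of order $r^{-sp}$.
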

\begin{proof}
Since $\phi_{r}u\rightarrow u$ a.e. in $\R^{N}$ as $r\rightarrow \infty$, $0\leq \phi\leq 1$ and $u\in L^{p}(\R^{N})$, we can apply the Dominated Convergence Theorem to see that $\lim_{r\rightarrow \infty} |u\phi_{r}-u|_{p}=0$.
Now, we show the first relation of limit. Let us note that 
\begin{align*}
[u\phi_{r}-u]^{p}_{s,p}&\leq 2^{p-1}\left[\iint_{\R^{2N}} \frac{|u(x)-u(y)|^{p}|\phi_{r}(x)-1|^{p}}{|x-y|^{N+sp}}dx dy+\iint_{\R^{2N}} \frac{|\phi_{r}(x)-\phi_{r}(y)|^{p}|u(y)|^{p}}{|x-y|^{N+sp}}dx dy\right] \\
&=: 2^{p-1} [A_{r}+B_{r}].
\end{align*}
Invoking the Dominated Convergence Theorem it is easy to deduce that $A_{r}\rightarrow 0$ as $r\rightarrow \infty$.
Concerning $B_{r}$, recalling that $0\leq \phi_{r}\leq 1$, $|\nabla \phi_{r}|_{\infty}\leq C/r$ and using the polar coordinates, we obtain
\begin{align*}
&\iint_{\R^{2N}} \frac{|\phi_{r}(x)-\phi_{r}(y)|^{p}}{|x-y|^{N+sp}}|u(y)|^{p} dx dy \\
&=\int_{\R^{N}} \int_{|x-y|>r} \frac{|\phi_{r}(x)-\phi_{r}(y)|^{p}}{|x-y|^{N+sp}}|u(u)|^{p} dy dx +\int_{\R^{N}} \int_{|x-y|\leq r} \frac{|\phi_{r}(x)-\phi_{r}(y)|^{p}}{|x-y|^{N+sp}}|u(y)|^{p} dy dx \\
&\leq C \int_{\R^{N}} |u(y)|^{p}  \left(\int_{|x-y|>r} \frac{dx}{|x-y|^{N+sp}}\right) dy + \frac{C}{r^{p}} \int_{\R^{N}} |u(y)|^{p} \left(\int_{|x-y|\leq r} \frac{dx}{|x-y|^{N+sp-p}}\right) dy  \\
&\leq C \int_{\R^{N}} |u(y)|^{p}  \left(\int_{|z|>r} \frac{dz}{|z|^{N+sp}}\right) dy + \frac{C}{r^{p}} \int_{\R^{N}} |u(y)|^{p} \left(\int_{|z|\leq r} \frac{dz}{|z|^{N+sp-p}}\right) dy  \\
&\leq C \int_{\R^{N}} |u(y)|^{p} dy \left(\int_{r}^{\infty} \frac{d\rho}{\rho^{sp+1}}\right)  + \frac{C}{r^{p}} \int_{\R^{N}} |u(y)|^{p} dy \left(\int_{0}^{r} \frac{d\rho}{\rho^{sp-p+1}}\right)  \\
&\leq  \frac{C}{r^{sp}} \int_{\R^{N}} |u(y)|^{p} dy+\frac{C}{r^{p}} r^{-sp+p}\int_{\R^{N}} |u(y)|^{p} dy \\
&\leq  \frac{C}{r^{sp}} \int_{\R^{N}} |u(y)|^{p} dy\leq \frac{C}{r^{sp}}\rightarrow 0 \quad \mbox{ as } \quad r\rightarrow \infty. 
\end{align*}
Hence, $B_{r}\rightarrow 0$ as $r\rightarrow \infty$ and  we can conclude the proof of lemma.
\end{proof}

\begin{lem}\label{lemVince}
Let $w\in \mathcal{D}^{s, p}(\R^{N})$ and $(z_{n})\subset \mathcal{D}^{s, p}(\R^{N})$ be a sequence such that $z_{n}\rightarrow 0$ a.e. and $[z_{n}]_{s,p}\leq C$ for any $n\in \mathbb{N}$. Then we have
\begin{align*}
\iint_{\R^{2N}} |\mathcal{A}(z_{n} + w) - \mathcal{A}(z_{n}) - \mathcal{A}(w)|^{p'} dx= o_{n}(1),
\end{align*}
where $\mathcal{A}(u):=\frac{|u(x)- u(y)|^{p-2}(u(x)- u(y))}{|x-y|^{\frac{N+sp}{p'}}}$ and $p'= \frac{p}{p-1}$ is the conjugate exponent of $p$.
\end{lem}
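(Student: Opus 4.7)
The plan is to adapt the classical Brezis--Lieb lemma to the Gagliardo seminorm. First I would rewrite the integrand in terms of the normalized difference quotients
\[
\alpha_n(x,y):=\frac{z_n(x)-z_n(y)}{|x-y|^{(N+sp)/p}}, \qquad \beta(x,y):=\frac{w(x)-w(y)}{|x-y|^{(N+sp)/p}}.
\]
Using $(p-1)p'=p$ and $(N+sp)\cdot p'/p'=N+sp$, one checks that $\mathcal{A}(u)=|\gamma|^{p-2}\gamma$ with $\gamma=(u(x)-u(y))/|x-y|^{(N+sp)/p}$, so the integrand equals
\[
G_n(x,y):=\bigl||\alpha_n+\beta|^{p-2}(\alpha_n+\beta)-|\alpha_n|^{p-2}\alpha_n-|\beta|^{p-2}\beta\bigr|^{p'},
\]
while $\|\alpha_n\|_{L^p(\R^{2N})}^p=[z_n]_{s,p}^p\leq C^p$ and $\|\beta\|_{L^p(\R^{2N})}^p=[w]_{s,p}^p<\infty$, so $\beta\in L^p(\R^{2N})$. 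Because $z_n\to 0$ a.e.\ in $\R^N$, Fubini yields $\alpha_n(x,y)\to 0$ for a.e.\ $(x,y)\in\R^{2N}$, and continuity of $t\mapsto|t|^{p-2}t$ then gives $G_n(x,y)\to 0$ a.e.

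The algebraic ingredient is the standard pointwise inequality: for every $\e>0$ there exists $C_\e>0$ such that
\[
\bigl||a+b|^{p-2}(a+b)-|a|^{p-2}a-|b|^{p-2}b\bigr|^{p'}\leq \e\,|a|^p+C_\e\,|b|^p \qquad \forall\,a,b\in\R,
\]
which follows from the $p$-homogeneity of both sides together with the vanishing of the left-hand side at $b=0$ (a compactness argument on the unit sphere $\{|a|^p+|b|^p=1\}$ isolates a neighborhood of $\{b=0\}$ where the ratio is below $\e$ from its complement, on which the ratio is bounded by a finite $C_\e$). Applied with $a=\alpha_n(x,y)$, $b=\beta(x,y)$, this gives pointwise $G_n\leq \e|\alpha_n|^p+C_\e|\beta|^p$.

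The proof is concluded by the Brezis--Lieb truncation. For each $\e>0$ set $W_{n,\e}:=(G_n-\e|\alpha_n|^p)^+$; then $0\leq W_{n,\e}\leq C_\e|\beta|^p\in L^1(\R^{2N})$ with a dominant independent of $n$, and $W_{n,\e}\to 0$ a.e. The Dominated Convergence Theorem yields $\iint_{\R^{2N}} W_{n,\e}\,dx\,dy\to 0$ as $n\to\infty$ for each fixed $\e>0$. Since $G_n\leq W_{n,\e}+\e|\alpha_n|^p$ by construction,
\[
\iint_{\R^{2N}} G_n\,dx\,dy\leq \iint_{\R^{2N}} W_{n,\e}\,dx\,dy+\e\,[z_n]_{s,p}^p\leq \iint_{\R^{2N}} W_{n,\e}\,dx\,dy+\e C^p,
\]
and taking $\limsup_{n\to\infty}$ followed by $\e\downarrow 0$ gives the stated conclusion. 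The only genuinely nontrivial input is the pointwise algebraic inequality in the second paragraph; everything else is the by-now-standard Brezis--Lieb truncation, and the uniform seminorm bound $[z_n]_{s,p}\leq C$ enters exactly at the last step to make the residual $\e\,[z_n]_{s,p}^p$ small uniformly in $n$.
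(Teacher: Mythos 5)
Your proof is correct, and the Brezis--Lieb truncation step (the auxiliary functions $W_{n,\e}=(G_n-\e|\alpha_n|^p)^{+}$, Dominated Convergence, and the final $\limsup$-then-$\e\downarrow 0$) coincides with what the paper does via its functions $H_{\e,n}$. Where you genuinely differ is in the algebraic ingredient. The paper establishes the pointwise estimate separately in the two regimes: for $p\geq 2$ it derives $\bigl||a+b|^{p-2}(a+b)-|a|^{p-2}a\bigr|\leq\e|a|^{p-1}+C_\e|b|^{p-1}$ from the Mean Value Theorem and Young's inequality (then absorbs the third term $|b|^{p-2}b$ into the $C_\e|b|^{p-1}$ part); for $1<p<2$ it quotes Lemma~3.1 of Mercuri--Willem, which gives the sharper bound $\bigl||c+d|^{p-2}(c+d)-|c|^{p-2}c\bigr|\leq C|d|^{p-1}$ with no $\e$ at all, so that dominated convergence applies outright. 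You instead prove
$$
\bigl||a+b|^{p-2}(a+b)-|a|^{p-2}a-|b|^{p-2}b\bigr|^{p'}\leq\e|a|^p+C_\e|b|^p
$$
in one stroke for every $p>1$, using the $p$-homogeneity of both sides, continuity of $t\mapsto|t|^{p-2}t$, vanishing of the left-hand side at $b=0$, and compactness of the sphere $\{|a|^p+|b|^p=1\}$. This is valid (near $b=0$ on the sphere $|a|^p$ is bounded below, so the continuity gives $\Phi<\e|a|^p$; off that neighborhood $|b|^p\geq\delta^p$, so $C_\e:=\max\Phi/\delta^p$ works), and it is cleaner in that it avoids the case split. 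What it gives up is the explicit, elementary form of the constants and the observation that for $1<p<2$ one actually has direct $L^1$-domination by $|b|^p$ with no $\e$-parameter; neither loss matters for the lemma. Both approaches are sound; yours is the more uniform one.
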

\begin{proof}
Firstly, we deal with the case $p\geq 2$. Using the Mean Value Theorem, the Young inequality and $p\geq 2$, we can see that for fixed $\e>0$ there exists $C_{\e}>0$ such that
\begin{equation}\label{abp-2}
||a+b|^{p-2}(a+b)-|a|^{p-2}a|\leq \e |a|^{p-1}+C_{\e}|b|^{p-1} \quad \mbox{ for all } a, b\in \R.
\end{equation}
Taking
$$
a=\frac{z_{n}(x)-z_{n}(y)}{|x-y|^{\frac{N+sp}{p}}} \quad \mbox{ and } \quad b=\frac{w(x)-w(y)}{|x-y|^{\frac{N+sp}{p}}}
$$
in \eqref{abp-2} we obtain
\begin{align*}
&\Bigl|\frac{|(z_{n}(x)+w(x))- (z_{n}(y)+w(y))|^{p-2}((z_{n}(x)+w(x))- (z_{n}(y)+w(y)))}{|x-y|^{\frac{N+sp}{p'}}} \\
&\qquad- \frac{|z_{n}(x)- z_{n}(y)|^{p-2}(z_{n}(x)- z_{n}(y))}{|x-y|^{\frac{N+sp}{p'}}} \Bigr| \\
&\leq \e \frac{|z_{n}(x)-z_{n}(y)|^{p-1}}{|x-y|^{\frac{N+sp}{p'}}}+ C_{\e}  \frac{|w(x)-w(y)|^{p-1}}{|x-y|^{\frac{N+sp}{p'}}}.
\end{align*}
Let us consider the function $H_{\e, n}:\R^{2N}\rightarrow \R_{+}$ defined by 
\begin{align*}
H_{\e, n} (x, y):= \max \left\{ |\A(z_{n} + w) - \A(z_{n}) - \A(w)|- \e \frac{|z_{n}(x)-z_{n}(y)|^{p-1}}{|x-y|^{\frac{N+sp}{p'}}}, 0 \right\}.
\end{align*}
We can see that $H_{\e, n} \rightarrow 0$ a.e. in $\R^{2N}$ as $n\rightarrow \infty$, and
\begin{align*}
0\leq H_{\e, n}(x, y) \leq C_{1} \frac{|w(x)-w(y)|^{p-1}}{|x-y|^{\frac{N+sp}{p'}}}\in L^{p'}(\R^{2N}).
\end{align*}
Then, using the Dominated Convergence Theorem, we have
\begin{align*}
\int_{\R^{2N}} |H_{\e, n} |^{p'} dxdy \rightarrow 0 \quad \mbox{ as } n\rightarrow \infty.
\end{align*}
On the other hand, from the definition of $H_{\e, n}$, we deduce that
\begin{align*}
|\A(z_{n} + w) - \A(z_{n}) - \A(w)|\leq \e \frac{|z_{n}(x)-z_{n}(y)|^{p-1}}{|x-y|^{\frac{N+sp}{p'}}} + H_{\e, n},
\end{align*}
so we obtain
\begin{align*}
|\A(z_{n} + w) -\A(z_{n}) - \A(w)|^{p'}\leq C_{2}\left[\e^{p'} \frac{|z_{n}(x)-z_{n}(y)|^{p}}{|x-y|^{N+sp}} + (H_{\e, n})^{p'}\right].
\end{align*}
Therefore
\begin{align*}
\limsup_{n\rightarrow \infty} &\iint_{\R^{2N}} |\A(z_{n} + w) - \A(z_{n}) - \A(w)|^{p'} dxdy \leq C_{2}\e^{p'}\limsup_{n\rightarrow \infty} [z_{n}]^{p}_{s,p} \leq C_{3}\e^{p'},
\end{align*}
and by the arbitrariness of $\e>0$ we get the thesis.\\
Now, we suppose that $1<p<2$. Using Lemma $3.1$ in \cite{MeW}, we know that
$$
\sup_{c\in \R^{N}, d\neq 0} \left|\frac{|c+d|^{p-2}(c+d)-|c|^{p-2}c}{|d|^{p-1}}\right|<\infty.
$$
Taking
$$
c=\frac{z_{n}(x)-z_{n}(y)}{|x-y|^{\frac{N+sp}{p}}} \quad \mbox{ and } \quad d=\frac{w(x)-w(y)}{|x-y|^{\frac{N+sp}{p}}},
$$
we can conclude the proof applying the Dominated Convergence Theorem.
\end{proof}

\noindent
\noindent
In order to study \eqref{R}, for any $\e>0$, we introduce the following fractional Sobolev space
$$
\h=\left\{u\in W^{s,p}(\R^{N}): \int_{\R^{N}} V(\e x) |u(x)|^{p}\, dx<\infty \right\}
$$
endowed with the norm
$$
\|u\|^{p}_{\e}=[u]_{s,p}^{p}+\int_{\R^{N}} V(\e x) |u(x)|^{p}\, dx.
$$
In view of assumption $(V)$ and Theorem \ref{Sembedding}, it is easy to check that the following result holds.
\begin{thm}\label{embedding}
The space $\h$ is continuously embedded in $W^{s, p}(\R^{N})$.
Therefore, $\h$ is continuously embedded in $L^{r}(\R^{N})$ for any $r\in [p, \p]$ and compactly embedded in $L^{r}_{loc}(\R^{N})$ for any $r\in [1, \p)$.
\end{thm}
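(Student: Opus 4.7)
The plan is to reduce the claim to Theorem \ref{Sembedding} by first establishing the continuous embedding $\h\hookrightarrow W^{s,p}(\R^{N})$, from which the continuous and locally compact embeddings into $L^{r}$ spaces follow by composition. The core input is the observation that condition $(V)$ (together with the positivity and continuity of $V$) provides a uniform positive lower bound $V_{0}>0$ on $V$, so that the weighted $L^{p}$ norm in the definition of $\|\cdot\|_{\e}$ controls the unweighted $L^{p}$ norm.

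The first step is to note that for every $u\in\h$ one has
\begin{equation*}
V_{0}\,|u|_{p}^{p}=V_{0}\int_{\R^{N}}|u(x)|^{p}\,dx\leq \int_{\R^{N}} V(\e x)|u(x)|^{p}\,dx,
\end{equation*}
and consequently
\begin{equation*}
\|u\|_{s,p}^{p}=[u]_{s,p}^{p}+|u|_{p}^{p}\leq \bigl(1+V_{0}^{-1}\bigr)\|u\|_{\e}^{p}.
\end{equation*}
This yields at once the continuous embedding $\h\hookrightarrow W^{s,p}(\R^{N})$, with an embedding constant independent of $\e$.

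The second step is to compose this embedding with the fractional Sobolev embeddings stated in Theorem \ref{Sembedding}. Continuous embedding into $L^{r}(\R^{N})$ for $r\in[p,\p]$ follows immediately. For the compact embedding into $L^{r}_{\mathrm{loc}}(\R^{N})$ for $r\in[p,\p)$, one simply invokes the compactness clause of Theorem \ref{Sembedding}: if $(u_{n})$ is bounded in $\h$, then it is bounded in $W^{s,p}(\R^{N})$ and therefore, up to a subsequence, converges strongly in $L^{r}(K)$ for every compact $K\subset\R^{N}$.

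To cover the range $r\in[1,p)$ stated in the theorem one only has to observe that on any bounded measurable set $K\subset\R^{N}$ Hölder's inequality gives $|u|_{L^{r}(K)}\leq |K|^{(p-r)/(pr)}\,|u|_{L^{p}(K)}$, so the compact embedding into $L^{p}_{\mathrm{loc}}$ automatically promotes to a compact embedding into $L^{r}_{\mathrm{loc}}$ for every $r\in[1,p]$. Since no step in this argument is delicate, there is no real obstacle; the only point worth highlighting, and the reason condition $(V)$ is used, is that the positivity assumption on $V$ is crucial for the first inequality to yield an \emph{equivalence} between the unweighted $W^{s,p}$-norm and the weighted $\|\cdot\|_{\e}$-norm on $\h$.
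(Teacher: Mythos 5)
Your proof is correct and is precisely the routine argument the paper has in mind (the paper states the result without proof, remarking only that it follows easily from $(V)$ and Theorem \ref{Sembedding}): the inequality $V_0|u|_p^p\leq\int V(\e x)|u|^p\,dx$ gives $\|u\|_{s,p}\leq C\|u\|_\e$ with $C$ independent of $\e$, and the $L^r$ and $L^r_{\mathrm{loc}}$ statements then follow by composing with the fractional Sobolev embedding, with the extension to $r\in[1,p)$ handled by H\"older on bounded sets exactly as you do.
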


When the potential $V(x)$ is coercive, we can obtain the compactness of $\h$ into the Lebesgue spaces $L^{r}(\R^{N})$.
\begin{thm}\label{Cheng}
Let $V_{\infty}=\infty$. Then $\h$ is compactly embedded into $L^{r}(\R^{N})$ for any $r\in [p, \p)$.
\end{thm}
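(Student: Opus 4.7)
The plan is to apply a standard ``local compactness plus coercivity controls the tails'' argument. Let $(u_n) \subset \h$ be bounded, $\|u_n\|_{\e} \leq C$. Since $\h$ embeds continuously into the reflexive space $W^{s,p}(\R^{N})$ (Theorem \ref{embedding}), up to a subsequence $u_n \rightharpoonup u$ in $W^{s,p}(\R^{N})$, $u_n \to u$ a.e. in $\R^{N}$, and $u_n \to u$ strongly in $L^{r}_{loc}(\R^{N})$ for every $r \in [p, \p)$. Fatou's lemma applied to the weighted term yields $\int_{\R^{N}} V(\e x) |u|^{p}\, dx < \infty$, hence $u \in \h$.

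Next I would exploit the coercivity hypothesis $V_{\infty} = \infty$ to control the mass at infinity in $L^{p}$. Given any $\delta > 0$, there exists $R > 0$ such that $V(y) \geq 1/\delta$ for every $|y| \geq R$. Consequently, for $|x| \geq R/\e$ we have $V(\e x) \geq 1/\delta$, so that
$$
\int_{|x| \geq R/\e} |u_n|^{p}\, dx \leq \delta \int_{|x| \geq R/\e} V(\e x) |u_n|^{p}\, dx \leq \delta C^{p},
$$
uniformly in $n$, with the same bound holding for $u$. Combining this estimate with the strong local convergence $u_n \to u$ in $L^{p}(\B_{R/\e}(0))$ and splitting the integral $\int_{\R^{N}} |u_n - u|^{p}\, dx$ over $\{|x| < R/\e\}$ and its complement, I would deduce that $u_n \to u$ in $L^{p}(\R^{N})$.

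To cover the full range $r \in [p, \p)$ I would then interpolate: for any such $r$ choose $\theta \in (0, 1]$ satisfying $\frac{1}{r} = \frac{\theta}{p} + \frac{1-\theta}{\p}$, so H\"older's inequality gives
$$
|u_n - u|_{r} \leq |u_n - u|_{p}^{\theta}\, |u_n - u|_{\p}^{1-\theta}.
$$
Since $(u_n)$ is bounded in $L^{\p}(\R^{N})$ by the fractional Sobolev inequality \eqref{FSI}, and $|u_n - u|_{p} \to 0$ by the previous step, the right-hand side vanishes, proving $u_n \to u$ in $L^{r}(\R^{N})$.

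The only delicate point is the tail control, and this is precisely where the coercivity $V_{\infty} = \infty$ enters; beyond that, the proof is just the classical interpolation-plus-local-compactness routine. Nothing of a genuinely nonlocal or fractional nature comes into play, because Theorem \ref{Sembedding} already provides both the $L^{\p}$ global bound and the compact local embedding needed.
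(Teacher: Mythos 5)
Your argument is correct and follows essentially the same route as the paper: the coercivity of $V$ controls the $L^p$ mass outside a large ball uniformly in $n$, local compactness of $W^{s,p}\hookrightarrow L^p_{loc}$ handles the bounded region, and interpolation against the uniform $L^{\p}$ bound upgrades the $L^p$ convergence to $L^r$ for $p<r<\p$. The only cosmetic differences are that the paper normalizes to a sequence with weak limit $0$ and absorbs the factor of $\e$ into the choice of radius, whereas you keep the general weak limit $u$ and make the $R/\e$ scaling explicit.
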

\begin{proof}
Firstly, we assume that $r=p$. From Theorem \ref{embedding} we know that $\h\subset L^{p}(\R^{N})$. Let $(u_{n})$ be a sequence such that $u_{n}\rightharpoonup 0$ in $\h$. Then, $u_{n}\rightharpoonup 0$
 in $W^{s, p}(\R^{N})$.\\
Let us define
\begin{equation}\label{ter1}
M:= \sup_{n\in \mathbb{N}} \|u_{n}\|_{\e} <\infty.
\end{equation}
Since $V$ is coercive, for any $\eta>0$ there exists $R=R_{\eta}>0$ such that
\begin{equation}\label{ter2}
\frac{1}{V(\e x)}<\eta, \quad \mbox{ for any } |x|>R.
\end{equation}
Since $u_{n}\rightarrow 0$ in $L^{p}(\B_{R}(0))$, there exists $n_{0}>0$ such that
\begin{equation}\label{ter3}
\int_{\B_{R}(0)} |u_{n}|^{p} dx \leq \eta \quad \mbox{ for any } n\geq n_{0}.
\end{equation}
Hence, for any $n\geq n_{0}$, by \eqref{ter1}-\eqref{ter3}, we have
\begin{align*}
\int_{\R^{N}} |u_{n}|^{p} dx &=\int_{\B_{R}(0)} |u_{n}|^{p} dx + \int_{\B_{R}^{c}(0)} |u_{n}|^{p} dx \\
&< \eta +\eta \int_{\B_{R}^{c}(0)} V(\e x) |u_{n}|^{p} dx \leq \eta(1+ M^{p}).
\end{align*}
Therefore, $u_{n}\rightarrow 0$ in $L^{p}(\R^{N})$.\\
When $r>p$, using the conclusion of $r=p$, interpolation inequality and Theorem \ref{Sembedding}, we can see that
\begin{equation*}
|u_{n}|_{r} \leq C [u_{n}]^{\alpha}_{s,p} |u_{n}|_{p}^{1-\alpha},
\end{equation*}
where $\frac{1}{r}=\frac{\alpha}{p}+\frac{1-\alpha}{\p}$, which yields the conclusion as required.
\end{proof}

\noindent
Let us prove the following splitting for the $\h$-norm in the spirit of the Brezis-Lieb Lemma \cite{BL}.
\begin{lem}\label{lemPSY}
If $(u_{n})$ is a bounded sequence in $\h$, then
\begin{align*}
\|u_{n}- u\|_{\e}^{p} = \|u_{n}\|_{\e}^{p} - \|u\|_{\e}^{p} + o_{n}(1).
\end{align*}
\end{lem}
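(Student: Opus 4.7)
The plan is to treat the two parts of $\|\cdot\|_\e$ separately and reduce each to the classical Brezis--Lieb lemma in an appropriate ambient space. First, since $(u_n)$ is bounded in $\h$ and $\h$ is continuously embedded in $W^{s,p}(\R^N)$, up to a subsequence I may assume $u_n \rightharpoonup u$ in $W^{s,p}(\R^N)$ and, by the compact embedding $\h \hookrightarrow L^p_{\mathrm{loc}}(\R^N)$, that $u_n \to u$ a.e.\ on $\R^N$. Strictly speaking the statement of the lemma refers to \emph{the} weak limit $u$ of the subsequence, and this is what I would make explicit at the start of the argument.

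For the weighted $L^p$ part I would set $g_n(x) := V(\e x)^{1/p} u_n(x)$. Since $V$ is continuous and bounded below by $V_0 > 0$ on compact sets, $g_n \to g := V(\e\cdot)^{1/p} u$ a.e., and by the bound $\int V(\e x)|u_n|^p\,dx \leq \|u_n\|_\e^p \leq C$ the sequence $(g_n)$ is bounded in $L^p(\R^N)$. The classical Brezis--Lieb lemma then yields
\begin{equation*}
\int_{\R^N} V(\e x) |u_n|^p\,dx - \int_{\R^N} V(\e x) |u_n - u|^p\,dx - \int_{\R^N} V(\e x) |u|^p\,dx = o_n(1).
\end{equation*}

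For the Gagliardo part I would lift the problem to $L^p(\R^{2N})$. Define
\begin{equation*}
U_n(x,y) := \frac{u_n(x)-u_n(y)}{|x-y|^{(N+sp)/p}}, \qquad U(x,y) := \frac{u(x)-u(y)}{|x-y|^{(N+sp)/p}}.
\end{equation*}
Then $|U_n|_{L^p(\R^{2N})}^p = [u_n]_{s,p}^p$ is bounded in $n$, and $U_n \to U$ pointwise a.e.\ on $\R^{2N}$ as a consequence of the pointwise a.e.\ convergence of $u_n$ to $u$ (outside the diagonal, which is a null set). Applying the classical Brezis--Lieb lemma to $(U_n)$ in $L^p(\R^{2N})$ gives $|U_n|_p^p - |U_n - U|_p^p - |U|_p^p \to 0$, i.e.
\begin{equation*}
[u_n]_{s,p}^p - [u_n - u]_{s,p}^p - [u]_{s,p}^p = o_n(1).
\end{equation*}
Summing this with the identity above yields the desired splitting $\|u_n - u\|_\e^p = \|u_n\|_\e^p - \|u\|_\e^p + o_n(1)$.

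The only nontrivial point is the Gagliardo step, since it requires working with difference quotients rather than with $u_n$ itself. The key observation is that the substitution $U_n = (u_n(x)-u_n(y))/|x-y|^{(N+sp)/p}$ turns the seminorm into an honest $L^p(\R^{2N})$-norm, so that both hypotheses of Brezis--Lieb (boundedness in $L^p$ and a.e.\ convergence) can be verified directly from the assumptions on $(u_n)$. No smoothness, no truncation, and no invocation of Lemma \ref{lemVince} is needed here, although the latter encodes a closely related compensation phenomenon that will be useful elsewhere in the paper.
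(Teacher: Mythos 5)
Your proof is correct and follows essentially the same route as the paper: apply the classical Brezis--Lieb lemma separately to the weighted $L^p$ part and, via the substitution $U_n(x,y)=(u_n(x)-u_n(y))/|x-y|^{(N+sp)/p}$, to the Gagliardo seminorm viewed as an $L^p(\R^{2N})$-norm. The only additions you make are to spell out the (correct) preliminaries that the paper leaves implicit, namely that $u$ is the weak limit along a subsequence and that $u_n\to u$ a.e., which indeed deserve to be stated.
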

\begin{proof}
From the Brezis-Lieb Lemma \cite{BL} we know that if $r\in (1, \infty)$ and $(g_{n})\subset L^{r}(\R^{k})$ is a bounded sequence such that $g_{n}\rightarrow g$ a.e. in $\R^{k}$, then we have
\begin{align}\label{ggg}
|g_{n}-g|_{L^{r}(\R^{k})}^{r}= |g_{n}|_{L^{r}(\R^{k})}^{r} - |g|_{L^{r}(\R^{k})}^{r} +o_{n}(1).
\end{align}
Therefore
\begin{align*}
\int_{\R^{N}} V(\e x) |u_{n}-u|^{p}= \int_{\R^{N}} V(\e x) |u_{n}|^{p}- \int_{\R^{N}} V(\e x) |u|^{p}+ o_{n}(1),
\end{align*}
and taking
\begin{equation*}
g_{n}=\frac{u_{n}(x)-u_{n}(y)}{|x-y|^{\frac{N+sp}{p}}}, \quad  g= \frac{u(x)-u(y)}{|x-y|^{\frac{N+sp}{p}}}, \quad r=p \, \mbox{ and } \, k=2N
\end{equation*}
in \eqref{ggg}, we can see that
\begin{align*}
[u_{n}-u]^{p}_{s,p}= [u_{n}]^{p}_{s,p}- [u]^{p}_{s,p} + o_{n}(1).
\end{align*}
\end{proof}

\noindent
The next lemma is  a variant of the Brezis-Lieb Lemma \cite{BL} (see also \cite{Ack}) for the nonlocal term.
\begin{lem}\label{BLlem}
Let $(u_{n})\subset W^{s,p}(\R^{N})$ be such that $u_{n}\rightharpoonup u$ in $W^{s,p}(\R^{N})$. Set $v_{n}=u_{n}-u$.
Then we have
\begin{equation*}
\Sigma(u_{n})-\Sigma(v_{n})-\Sigma(u)=o_{n}(1),
\end{equation*}
and for any $\varphi\in \h$ such that $\|\varphi\|_{\e}\leq 1$ it holds
\begin{equation*}
\langle \Sigma'(u_{n})-\Sigma'(v_{n})-\Sigma'(u), \varphi\rangle=o_{n}(1),
\end{equation*}
where
$$
\Sigma(u):=\frac{1}{2}\int_{\R^{N}} K(u)(x) F(u(x))\, dx \quad \mbox{ and } \quad K(u)(x):=\int_{\R^{N}}\frac{F(u(y))}{|x-y|^{\mu}} dy.
$$
\end{lem}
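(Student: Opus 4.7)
The plan is to recast everything through the symmetric bilinear form
\[
Q(g,h):=\iint_{\R^{2N}} \frac{g(x)\,h(y)}{|x-y|^{\mu}}\,dx\,dy,
\]
which, by the Hardy-Littlewood-Sobolev inequality (Theorem \ref{HLS}) with the symmetric exponent $t:=\frac{2N}{2N-\mu}$, is continuous on $L^{t}(\R^{N})\times L^{t}(\R^{N})$. With this notation $\Sigma(u)=\tfrac12 Q(F(u),F(u))$ and $\langle \Sigma'(u),\varphi\rangle=Q(F(u),f(u)\varphi)$. Passing to a subsequence we may assume $u_{n}\to u$ a.e., so $v_{n}\to 0$ a.e.\ and, by continuity, $F(u_{n})\to F(u)$, $F(v_{n})\to 0$, $f(u_{n})\to f(u)$, $f(v_{n})\to 0$ a.e. The admissible range $\frac{p}{2}(2-\frac{\mu}{N})<q_{1}\leq q_{2}<\frac{\p}{2}(2-\frac{\mu}{N})$ forces $q_{i}t\in (p,\p)$, so $(f_{1})$ and the Sobolev embedding (Theorem \ref{Sembedding}) make the sequences $(F(u_{n}))$ and $(F(v_{n}))$ bounded in $L^{t}$; combined with the a.e.\ convergences this yields $F(u_{n})\rightharpoonup F(u)$ and $F(v_{n})\rightharpoonup 0$ in $L^{t}$.

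The key ingredient is a Brezis-Lieb-type splitting for the composite function $F$,
\[
d_{n}:=F(u_{n})-F(v_{n})\longrightarrow F(u) \quad \mbox{strongly in } L^{t}(\R^{N}).
\]
This does not follow directly from the elementary Brezis-Lieb Lemma (which treats pure powers) but can be derived via the representation $F(u_{n})-F(v_{n})=\int_{0}^{1}f(v_{n}+\tau u)\,u\,d\tau$ from the Mean Value Theorem, together with the Dominated Convergence Theorem: the pointwise convergence $f(v_{n}+\tau u)\to f(\tau u)$ and the domination afforded by $(f_{1})$ and the uniform integrability of $(u_{n})$ in $L^{q_{i}t}$ conspire to yield the claim (cf.\ the Choquard arguments in \cite{Ack, MVS1}). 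The same reasoning, carried out with $f$ in place of $F$, gives $f(u_{n})-f(v_{n})\to f(u)$ strongly in a suitable $L^{\rho}(\R^{N})$, where $\rho$ is chosen so that the HLS/H\"older pairing $|h\varphi|_{t}\leq C|h|_{\rho}|\varphi|_{r}$ is available with some $r\in [p,\p]$; the admissibility of this choice is again guaranteed by the range of $q_{1},q_{2}$.

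For the first identity, bilinearity of $Q$ gives
\[
Q(F(u_{n}),F(u_{n}))-Q(F(v_{n}),F(v_{n}))=Q(d_{n},F(u_{n}))+Q(F(v_{n}),d_{n}),
\]
and weak-strong continuity of $Q$ on $L^{t}\times L^{t}$ forces $Q(d_{n},F(u_{n}))\to Q(F(u),F(u))$ and $Q(F(v_{n}),d_{n})\to 0$; division by two closes the first claim. For the derivative, an analogous splitting yields
\[
\langle \Sigma'(u_{n})-\Sigma'(v_{n}),\varphi\rangle = Q(d_{n},f(u_{n})\varphi) + Q\bigl(F(v_{n}),[f(u_{n})-f(v_{n})]\varphi\bigr);
\]
decomposing further $Q(d_{n},f(u_{n})\varphi)=Q(F(u),f(u)\varphi)+Q(d_{n}-F(u),f(u_{n})\varphi)+Q(F(u),[f(u_{n})-f(u)]\varphi)$ reduces the statement to showing that each remainder tends to zero. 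By HLS with unequal exponents and H\"older each such term is bounded by $C\,|g|_{t}|h|_{\rho}|\varphi|_{r}\leq C|g|_{t}|h|_{\rho}\|\varphi\|_{\e}$, where either $|g|_{t}\to 0$ (thanks to $d_{n}\to F(u)$), or $|h|_{\rho}\to 0$ (thanks to $f(u_{n})-f(v_{n})\to f(u)$), or one tests the weak convergence $F(v_{n})\rightharpoonup 0$ against the fixed $L^{t'}$-function obtained by Riesz-convolving $f(u)\varphi$; in all cases the contribution is $o_{n}(1)$.

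The principal obstacle is the strong convergence in the Brezis-Lieb-type step, since neither $F$ nor $f$ is a pure power and the classical Brezis-Lieb identity has no direct analogue here. The MVT/DCT route sketched above, together with the tight admissible range for $q_{1},q_{2}$ that keeps every intermediate HLS/Sobolev integral finite, is exactly what bridges that gap.
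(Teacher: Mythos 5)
Your overall strategy mirrors the paper's: both hinge on establishing $F(u_n)-F(v_n)\to F(u)$ strongly in $L^{\frac{2N}{2N-\mu}}(\R^N)$ via the Mean Value Theorem, and then split the bilinear form, using $F(v_n)\rightharpoonup 0$ to kill the cross term. Your decomposition $Q(a,a)-Q(b,b)=Q(a-b,a)+Q(b,a-b)$ is equivalent by symmetry to the paper's identity $Q(a,a)-Q(b,b)=Q(d,d)+2Q(d,b)$ (with $d=a-b$), so the algebraic part is fine, and your sketch of the $\Sigma'$ statement fills in a step the paper delegates to \cite{Ack}.

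The gap is in how you reach the strong convergence $F(u_n)-F(v_n)\to F(u)$. Your invocation of the Dominated Convergence Theorem does not apply as stated: the envelope provided by $(f_1)$ for $|f(v_n+\tau u)\,u|$ is $C\bigl(|v_n+\tau u|^{q_1-1}+|v_n+\tau u|^{q_2-1}\bigr)|u|$, which depends on $n$, so there is no single $L^1$ majorant. Appealing to ``uniform integrability of $(u_n)$ in $L^{q_i t}$'' does not save this, since a sequence that is merely bounded in $L^{q_i t}$ (which is all weak convergence in $W^{s,p}$ gives you) is in general neither uniformly integrable nor tight. The paper closes the gap with the classical $\e$-splitting device, which is exactly what your references \cite{Ack, MVS1} use as well: Young's inequality yields
$$
|F(u_n)-F(v_n)-F(u)|^{\frac{2N}{2N-\mu}} \leq \e\bigl(|u_n|^{\frac{2Nq_1}{2N-\mu}}+|u_n|^{\frac{2Nq_2}{2N-\mu}}\bigr)+C_\e\bigl(|u|^{\frac{2Nq_1}{2N-\mu}}+|u|^{\frac{2Nq_2}{2N-\mu}}\bigr),
$$
one defines $G_{\e,n}$ as the positive part of the excess over the $\e$-term, observes that $G_{\e,n}\to 0$ a.e.\ with the fixed $L^1$ majorant $C_\e(|u|^{\cdots})$ so that DCT genuinely applies to $G_{\e,n}$, and then lets $\e\to 0$ using only the $L^{q_i t}$-bound on $(u_n)$. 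Without that mechanism the ``key ingredient'' of your proof is asserted rather than proved. The same $\e$-splitting is needed again in your sketch of $f(u_n)-f(v_n)\to f(u)$ for the derivative identity, where you face the identical $n$-dependent-envelope problem.
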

\begin{proof}
We only show the validity of the first statement because the second one can be proved using similar arguments. For more details we refer the interested reader to \cite{Ack}.\\
Firstly, we show that 
\begin{equation}\label{BL1}
F(u_{n})-F(v_{n})\rightarrow F(u) \mbox{ in } L^{\frac{2N}{2N-\mu}}(\R^{N}).
\end{equation}
By the Mean Value Theorem, assumption $(f_1)$ and Young's inequality we can see that for any $\e>0$ there exists $C_{\e}>0$ such that
\begin{align*}
|F(u_{n})-F(v_{n})|^{\frac{2N}{2N-\mu}}&\leq \left|\int_{0}^{1} f(u_{n}-tu)u \, dt \right|^{\frac{2N}{2N-\mu}} \\
&\leq \left[|u| (|u_{n}|+|u|)^{q_{1}-1} +|u|(|u_{n}|+|u|)^{q_{2}-1} \right]^{\frac{2N}{2N-\mu}} \\
&\leq \e(|u_{n}|^{\frac{2Nq_{1}}{2N-\mu}}+|u_{n}|^{\frac{2Nq_{2}}{2N-\mu}})+C (|u|^{\frac{2Nq_{1}}{2N-\mu}}+|u|^{\frac{2Nq_{2}}{2N-\mu}})
\end{align*}
which together with
$$
|F(u)|^{\frac{2N}{2N-\mu}}\leq C (|u|^{\frac{2Nq_{1}}{2N-\mu}}+|u|^{\frac{2Nq_{2}}{2N-\mu}})
$$
implies that
$$
|F(u_{n})-F(v_{n})-F(u)|^{\frac{2N}{2N-\mu}}\leq \e(|u_{n}|^{\frac{2Nq_{1}}{2N-\mu}}+|u_{n}|^{\frac{2Nq_{2}}{2N-\mu}})+C (|u|^{\frac{2Nq_{1}}{2N-\mu}}+|u|^{\frac{2Nq_{2}}{2N-\mu}}).
$$
Let us define 
$$
G_{\e, n}=\max \left\{|F(u_{n})-F(v_{n})-F(u)|^{\frac{2N}{2N-\mu}}-\e(|u_{n}|^{\frac{2Nq_{1}}{2N-\mu}}+|u_{n}|^{\frac{2Nq_{2}}{2N-\mu}}), 0 \right\},
$$
and  we observe that $G_{\e, n}\rightarrow 0$ a.e. in $\R^{N}$ as $n\rightarrow \infty$ and 
$$
0\leq G_{\e, n}\leq C(|u|^{\frac{2Nq_{1}}{2N-\mu}}+|u|^{\frac{2Nq_{2}}{2N-\mu}})\in L^{1}(\R^{N}),
$$ 
because of $p<\frac{2Nq_{1}}{2N-\mu}<\p$ and $p<\frac{2Nq_{2}}{2N-\mu}<\p$ in view of $p<q_{1}\leq q_{2}<\frac{p(N-\mu)}{N-sp}$.\\
Hence, invoking the Dominated Convergence Theorem, we get $G_{\e, n}\rightarrow 0$ in $L^{1}(\R^{N})$ as $n\rightarrow \infty$. \\
On the other hand, from the definition of $G_{\e, n}$ it follows that 
$$
|F(u_{n})-F(v_{n})-F(u)|^{\frac{2N}{2N-\mu}}\leq \e(|u_{n}|^{\frac{2Nq_{1}}{2N-\mu}}+|u_{n}|^{\frac{2Nq_{2}}{2N-\mu}})+G_{\e, n},
$$
so, using the boundedness of $(u_{n})$ in $W^{s,p}(\R^{N})$, we can deduce that
$$
\limsup_{n\rightarrow \infty} \int_{\R^{N}} |F(u_{n})-F(v_{n})-F(u)|^{\frac{2N}{2N-\mu}}\leq C\e \quad \forall \e>0.
$$
This ends the proof of \eqref{BL1}.
Then, in view of Theorem \ref{HLS}, we have
\begin{equation}\label{BL2}
\frac{1}{|x|^{\mu}}*(F(u_{n})-F(v_{n}))\rightarrow \frac{1}{|x|^{\mu}}*F(u) \mbox{ in } L^{\frac{2N}{\mu}}(\R^{N}).
\end{equation}
Now, let us note that
\begin{align}\label{BL3}
&\int_{\R^{N}} \left(\frac{1}{|x|^{\mu}}*F(u_{n})\right)F(u_{n})-\left(\frac{1}{|x|^{\mu}}*F(v_{n})\right)F(v_{n}) \nonumber\\
&=\int_{\R^{N}} \left(\frac{1}{|x|^{\mu}}*(F(u_{n})-F(v_{n}))\right) (F(u_{n})-F(v_{n}))+2\int_{\R^{N}} \left(\frac{1}{|x|^{\mu}}*(F(u_{n})-F(v_{n}))\right) F(v_{n}).
\end{align}
Since $F(v_{n})\rightharpoonup 0$ in $L^{\frac{2N}{2N-\mu}}(\R^{N})$, we can use \eqref{BL1}, \eqref{BL2} and  \eqref{BL3} to obtain the thesis.
\end{proof}

\section{Variational framework}
In order to study \eqref{R}, we will look for critical points of the following Euler-Lagrange functional $\J_{\e}: \h\rightarrow \R$ defined by
$$
\J_{\e}(u)=\frac{1}{p}\|u\|^{p}_{\e}-\Sigma(u)
$$
where 
$$
\Sigma(u)=\frac{1}{2}\int_{\R^{N}} K(u)(x) F(u(x))\, dx \quad \mbox{ and } \quad K(u)(x)=\int_{\R^{N}}\frac{F(u(y))}{|x-y|^{\mu}} dy.
$$
In view of $(f_1)$, Theorem \ref{embedding} and Theorem \ref{HLS}, it is easy to check that $\J_{\e}$ is well-defined, $\J_{\e}\in C^{1}(\h, \R)$ and its differential is given by
\begin{align*}
\langle \J'_{\e}(u), v\rangle&=\iint_{\R^{2N}} \frac{|u(x)-u(y)|^{p-2}(u(x)-u(y))(v(x)-v(y))}{|x-y|^{N+sp}}\, dx dy+\int_{\R^{N}} V(\e x)|u|^{p-2}uv \,dx\\
&\quad -\int_{\R^{N}}\int_{\R^{N}} \frac{F(u(y))}{|x-y|^{\mu}}f(u(x)) v(x) \,dx dy
\end{align*}
for any $u, v\in \h$.
Let us introduce the Nehari manifold associated with (\ref{R}), namely
\begin{equation*}
\mathcal{N}_{\e}:= \{u\in \h \setminus \{0\} : \langle \J_{\e}'(u), u \rangle =0\}.
\end{equation*}

\noindent
Firstly, we show that $\J_{\e}$ verifies the assumptions of the Mountain Pass Theorem \cite{AR}. 
\begin{lem}\label{MPG}
$\J_{\e}$ verifies the following properties:
\begin{compactenum}[(i)]
\item there exist $\alpha, \rho>0$ such that $\J_{\e}(u)\geq \alpha$ for any $u\in \h$ such that $\|u\|_{\e}=\rho$;
\item there exists $e\in \h$ with $\|e\|_{\e}>\rho$ such that $\J_{\e}(e)<0$.
\end{compactenum}
\end{lem}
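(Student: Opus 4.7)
The plan is to verify the two Mountain Pass geometric conditions separately, relying on the Hardy--Littlewood--Sobolev estimate for $\Sigma$ and on the superlinearity built into $(f_2)$.

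For (i), I would start from the pointwise bound $|F(t)|\leq C(|t|^{q_{1}}+|t|^{q_{2}})$, which follows from $(f_{1})$ by integration. Applying Theorem \ref{HLS} with $r=t=\frac{2N}{2N-\mu}$ gives
\[
\Sigma(u)\;\leq\; C\,|F(u)|_{\frac{2N}{2N-\mu}}^{2}.
\]
The key observation is that the restrictions on $q_{1},q_{2}$ in $(f_{1})$ force the exponents $r_{i}:=\frac{2Nq_{i}}{2N-\mu}$ to lie in $(p,\p)$, so Theorem \ref{embedding} yields $|u|_{r_{i}}\leq C\|u\|_{\e}$. Combining the standard inequality $(a+b)^{1/r}\leq a^{1/r}+b^{1/r}$ with these embeddings produces
\[
\Sigma(u)\;\leq\; C\bigl(\|u\|_{\e}^{2q_{1}}+\|u\|_{\e}^{2q_{2}}\bigr),
\]
hence
\[
\J_{\e}(u)\;\geq\;\frac{1}{p}\|u\|_{\e}^{p}-C\bigl(\|u\|_{\e}^{2q_{1}}+\|u\|_{\e}^{2q_{2}}\bigr).
\]
Since $q_{i}>\frac{p}{2}(2-\frac{\mu}{N})>\frac{p}{2}$, we have $2q_{i}>p$, so for $\|u\|_{\e}=\rho$ sufficiently small the right-hand side is bounded below by a positive constant $\alpha$.

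For (ii), I would extract a superlinear lower bound on $F$ from $(f_{2})$. Rewriting $(f_{2})$ as $\frac{f(t)}{F(t)}\geq\frac{\theta}{2t}$ for $t>0$ and integrating between $1$ and $t$ gives
\[
F(t)\;\geq\; F(1)\,t^{\theta/2}\qquad\text{for all }t\geq 1.
\]
Fix any nonnegative nonzero $u_{0}\in C_{c}^{\infty}(\R^{N})$ and a measurable subset $A\subset\supp u_{0}$ of positive measure on which $u_{0}\geq c_{0}>0$. For every $t\geq 1/c_{0}$ we have $tu_{0}\geq 1$ on $A$, whence
\[
\Sigma(tu_{0})\;\geq\;\tfrac{1}{2}F(1)^{2}\,t^{\theta}\iint_{A\times A}\frac{u_{0}(x)^{\theta/2}u_{0}(y)^{\theta/2}}{|x-y|^{\mu}}\,dx\,dy\;=:\;C_{0}\,t^{\theta},
\]
with $C_{0}>0$ since the double integral is convergent and strictly positive. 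Using $\|tu_{0}\|_{\e}^{p}=t^{p}\|u_{0}\|_{\e}^{p}$ and $\theta>p$, we conclude that $\J_{\e}(tu_{0})\leq\frac{t^{p}}{p}\|u_{0}\|_{\e}^{p}-C_{0}t^{\theta}\to-\infty$ as $t\to\infty$. Choosing $t^{*}$ so large that $\|t^{*}u_{0}\|_{\e}>\rho$ and $\J_{\e}(t^{*}u_{0})<0$, we simply take $e:=t^{*}u_{0}$.

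I do not expect a serious obstacle: the only mildly delicate step is the bookkeeping on exponents in (i), namely checking that the $r_{i}$'s land in the Sobolev range $[p,\p]$, which is exactly the role of the upper and lower bounds on $q_{1},q_{2}$ in $(f_{1})$; once this is in place both items reduce to a comparison of powers, of $\|u\|_{\e}$ for (i) and of $t$ for (ii).
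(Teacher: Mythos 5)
Your proof is correct. Part (i) is essentially identical to the paper's argument: the same Hardy--Littlewood--Sobolev step with $t=\frac{2N}{2N-\mu}$, the same verification that $tq_{i}\in(p,\p)$, and the same comparison of powers $p$ versus $2q_{1},2q_{2}$.

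Part (ii) is where you diverge from the paper in a genuine but equally valid way. The paper introduces $h(t)=\Sigma\bigl(\tfrac{tu_{0}}{\|u_{0}\|_{\e}}\bigr)$, uses $(f_{2})$ to derive the differential inequality $h'(t)\geq\frac{\theta}{t}\,h(t)$, and integrates this to obtain $\Sigma(tu_{0})\geq C\,t^{\theta}$. You instead integrate the Ambrosetti--Rabinowitz condition at the \emph{pointwise} level to obtain $F(t)\geq F(1)\,t^{\theta/2}$ for $t\geq 1$, then plug that lower bound directly into the double integral defining $\Sigma$, restricting to a set $A\times A$ where $u_{0}$ is bounded below by a positive constant and discarding the rest (which is nonnegative since $F\geq 0$, as $u_{0}\geq 0$ and $(f_{2})$ forces $F(t)>0$ for $t>0$). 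Both mechanisms give $\Sigma(tu_{0})\geq C_{0}t^{\theta}$ and hence the same conclusion. Your route is more elementary and makes the superlinearity of $F$ explicit, but it leans on the specific structure of $\Sigma$ as a double integral with a positive kernel; the paper's Gronwall-type argument on $h$ is more abstract and would transfer unchanged to other nonlinear functionals satisfying a derivative inequality of the same form. One small bookkeeping point in your version: to justify $C_{0}>0$, one should note that $\iint_{A\times A}\frac{u_{0}(x)^{\theta/2}u_{0}(y)^{\theta/2}}{|x-y|^{\mu}}\,dx\,dy$ is finite because $A$ is bounded, $u_{0}$ is bounded, and $\mu<N$, and strictly positive because the integrand is strictly positive a.e.\ on $A\times A$; you assert this but it is worth spelling out.
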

\begin{proof}
Using $(f_1)$ and applying Theorem \ref{HLS} we get
\begin{align}\label{a1}
\left|\int_{\R^{N}} K(u)F(u)\, dx\right|\leq C|F(u)|_{t} |F(u)|_{t} \leq C\left(\int_{\R^{N}} (|u|^{q_{1}}+|u|^{q_{2}}\, dx)^{t}\right)^{\frac{2}{t}},
\end{align}
where $\frac{1}{t}=\frac{1}{2}(2-\frac{\mu}{N})$. Since $p<q_{1}\leq q_{2}<\frac{p^{*}_{s}}{2}(2-\frac{\mu}{N})$, we can see that $t q_{1}, tq_{2}\in (p, p^{*}_{s})$, and from Theorem \ref{embedding}  we have
\begin{align}\label{a2}
\left(\int_{\R^{N}} (|u|^{q_{1}}+|u|^{q_{2}})^{t}\, dx\right)^{\frac{2}{t}}\leq C(\|u\|^{q_{1}}_{\e}+\|u\|^{q_{2}}_{\e})^{2}.
\end{align}
Putting together \eqref{a1} and \eqref{a2} we can deduce that
\begin{align*}
\left|\int_{\R^{N}} \left(\frac{1}{|x|^{\mu}}*F(u)\right)F(u)\, dx\right|\leq C(\|u\|^{q_{1}}_{\e}+\|u\|^{q_{2}}_{\e})^{2}\leq C(\|u\|^{2q_{1}}_{\e}+\|u\|^{2q_{2}}_{\e}).
\end{align*}
Therefore, we obtain
$$
\J(u)\geq \frac{1}{p}\|u\|^{p}_{\e}-C(\|u\|^{2q_{1}}_{\e}+\|u\|^{2q_{2}}_{\e}),
$$
and being $q_{2}\geq q_{1}>\frac{p}{2}$ we can see that $(i)$ holds.
Fix $u_{0}\in W^{s,p}(\R^{N})\setminus\{0\}$ such that $u_{0}\geq 0$, and we set
$$
h(t)=\Sigma\left(\frac{t u_{0}}{\|u_{0}\|_{\e}}\right) \mbox{ for } t>0.
$$
Using $(f_2)$, we deduce that
\begin{align}\label{a3}
h'(t)&=\Sigma'\left(\frac{t u_{0}}{\|u_{0}\|_{\e}}\right) \frac{u_{0}}{\|u_{0}\|_{\e}} \nonumber \\
&=\int_{\R^{N}} \left(\frac{1}{|x|^{\mu}}*F\left(\frac{t u_{0}}{\|u_{0}\|_{\e}}\right)  \right) f\left(\frac{t u_{0}}{\|u_{0}\|_{\e}}\right)\frac{u_{0}}{\|u_{0}\|_{\e}}\, dx\nonumber \\
&=\frac{1}{t}\int_{\R^{N}} \frac{1}{2} \left(\frac{1}{|x|^{\mu}}*F\left(\frac{t u_{0}}{\|u_{0}\|_{\e}}\right)  \right) 2  f\left(\frac{t u_{0}}{\|u_{0}\|_{\e}}\right)\frac{t u_{0}}{\|u_{0}\|_{\e}}\, dx \nonumber\\
&>\frac{\theta}{t} h(t).
\end{align}
Integrating \eqref{a3} on $[1, t\|u_{0}\|_{\e}]$ with $t>\frac{1}{\|u_{0}\|_{\e}}$, we find
$$
h(t\|u_{0}\|_{\e})\geq h(1)(t\|u_{0}\|_{\e})^{\theta}
$$
which implies that
$$
\Sigma(t u_{0})\geq \Sigma\left(\frac{u_{0}}{\|u_{0}\|_{\e}}\right) \|u_{0}\|^{\theta}_{\e}t^{\theta}.
$$
Consequently, we have
$$
\J_{\e}(t u_{0})= \frac{t^{p}}{p}\|u_{0}\|^{p}_{\e}-\Sigma(t u_{0})\leq C_{1} t^{p}-C_{2}t^{\theta} \mbox{ for } t>\frac{1}{\|u_{0}\|}_{\e}.
$$
Taking $e=t u_{0}$ with $t$ sufficiently large, we can see that $(ii)$ holds.
\end{proof}

\noindent
Now, we prove the following lemma related to the function $K(u)$ which will be very useful in the sequel.
\begin{lem}\label{lemK}
Assume that $(f_1)$-$(f_3)$ hold, $0<\mu<sp$ and $p<q_{1}\leq q_{2}<\frac{p(N-\mu)}{N-sp}$. Let $(u_{n})$ be a bounded sequence in $\h$. Then there exists $C_{0}>0$ such that
\begin{equation}\label{a6}
|K(u_{n})|_{\infty}\leq C_{0} \mbox{ for any } \e>0.
\end{equation}
\end{lem}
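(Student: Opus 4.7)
The strategy is to bound $K(u_n)(x)$ uniformly in $x\in\R^N$ by splitting the defining integral at unit scale: the near-field part will be handled by H\"older's inequality against the locally integrable singularity $|x-\cdot|^{-\mu}$, while the far-field part will be controlled by the $L^1$-norm of $F(u_n)$ via the Sobolev embedding of $\h$. The key algebraic fact the proof must exploit is that the upper bound $q_2<p(N-\mu)/(N-sp)$ assumed in the hypotheses is exactly what allows the two estimates to be calibrated simultaneously.

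Concretely, for each $x\in\R^N$ I would write
\begin{equation*}
K(u_n)(x) \;=\; \int_{\B_1(x)} \frac{F(u_n(y))}{|x-y|^{\mu}}\,dy \;+\; \int_{\R^N\setminus \B_1(x)} \frac{F(u_n(y))}{|x-y|^{\mu}}\,dy \;=:\; I_1(x)+I_2(x).
\end{equation*}
For $I_2$, since $|x-y|^{-\mu}\le 1$ on the complement of $\B_1(x)$, I get $|I_2(x)|\le |F(u_n)|_1$, and then $(f_1)$ gives $|F(u_n)|_1\le C(|u_n|_{q_1}^{q_1}+|u_n|_{q_2}^{q_2})$; because $p<q_1\le q_2<\p$, Theorem \ref{embedding} together with the assumed boundedness of $(u_n)$ in $\h$ yields a uniform bound (the embedding constants depend only on $V_0$ from $(V)$, hence are independent of $\e$). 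For $I_1$, I would pick H\"older conjugates $a,a'$ with $1/a+1/a'=1$ subject to the two constraints $\mu a'<N$ (so that $|x-\cdot|^{-\mu}\in L^{a'}(\B_1(x))$ with a norm independent of $x$) and $q_2 a\le \p$ (so that $|u_n|^{q_2}\in L^a$). H\"older's inequality then yields
\begin{equation*}
|I_1(x)| \;\le\; \bigl\| |x-\cdot|^{-\mu}\bigr\|_{L^{a'}(\B_1(x))}\,|F(u_n)|_{L^{a}(\R^N)} \;\le\; C\bigl(|u_n|_{q_1 a}^{q_1}+|u_n|_{q_2 a}^{q_2}\bigr),
\end{equation*}
again uniformly bounded via Theorem \ref{embedding} (note $p<q_1 a\le q_2 a\le \p$).

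The main obstacle is verifying that the two H\"older conditions are compatible. They rewrite as $N/(N-\mu)<a\le \p/q_2$, and such $a$ exists precisely when $q_2<\p(N-\mu)/N=p(N-\mu)/(N-sp)$, which is exactly the upper bound on $q_2$ assumed in the statement; the lower constraint is meaningful because $\mu<sp<N$. Thus the proof reduces, after the splitting, to matching exponents in a way dictated by the Hardy--Littlewood--Sobolev structure of the nonlocal term, with no further compactness argument required; the uniformity in $\e$ is automatic since every estimate passes through Theorem \ref{embedding}, whose constants do not depend on $\e$.
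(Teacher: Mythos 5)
Your proof is correct and follows essentially the same strategy as the paper: a unit-scale decomposition of $K(u_n)(x)$, an $L^1$ bound for the far-field part, and a H\"older estimate against the locally integrable singularity for the near-field part, calibrated by the hypothesis $q_2<p(N-\mu)/(N-sp)$. The only cosmetic difference is that you use a single H\"older exponent $a$ for both the $q_1$ and $q_2$ contributions, while the paper picks two separate exponents $t$ and $r$; both variants close the argument in the same way.
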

\begin{proof}
Let us note that $(f_1)$ yields
\begin{equation*}
|F(t)|\leq C(|t|^{q_{1}}+|t|^{q_{2}}) \mbox{ for all } t\in \R.
\end{equation*}
Then, we can see that
\begin{align}\label{a7}
|K(u_{n})(x)|&=\left| \int_{\R^{N}} \frac{F(u_{n})}{|x-y|^{\mu}} \,dy\right| \nonumber\\
&\leq \left| \int_{|x-y|\leq 1} \frac{F(u_{n})}{|x-y|^{\mu}} \,dy\right|+\left| \int_{|x-y|>1} \frac{F(u_{n})}{|x-y|^{\mu}} \,dy\right| \nonumber\\
&\leq C \int_{|x-y|\leq 1} \frac{|u_{n}(y)|^{q_{1}}+|u_{n}(y)|^{q_{2}}}{|x-y|^{\mu}}\, dy+C \int_{\R^{N}} (|u_{n}|^{q_{1}}+|u_{n}|^{q_{2}})\, dy \nonumber\\
&\leq C \int_{|x-y|\leq 1} \frac{|u_{n}(y)|^{q_{1}}+|u_{n}(y)|^{q_{2}}}{|x-y|^{\mu}}\, dy+C
\end{align}
where in the last line we used Theorem \ref{embedding} and $\|u_{n}\|_{\e}\leq K$.
Now, we take 
$$
t\in \left(\frac{N}{N-\mu}, \frac{Np}{(N-sp)q_{1}}\right] \mbox{ and } r\in \left(\frac{N}{N-\mu}, \frac{Np}{(N-sp)q_{2}}\right].
$$
Applying the H\"older inequality and using Theorem \ref{embedding} and $\|u_{n}\|_{\e}\leq K$, we can see that
\begin{align}\label{a8}
\int_{|x-y|\leq 1} \frac{|u_{n}(y)|^{q_{1}}}{|x-y|^{\mu}}\, dy&\leq \left(\int_{|x-y|\leq 1} |u_{n}|^{tq_{1}}\, dy  \right)^{\frac{1}{t}} \left(\int_{|x-y|\leq 1} \frac{1}{|x-y|^{\frac{t\mu}{t-1}}}\, dy \right)^{\frac{t-1}{t}}\nonumber \\
&\leq C \left(\int_{0}^{1} \rho^{N-1-\frac{t \mu}{t-1}}\, d\rho  \right)^{\frac{t-1}{t}}<\infty.
\end{align}
because of $N-1-\frac{t \mu}{t-1}>-1$.
Similarly, we get
\begin{align}\label{a9}
\int_{|x-y|\leq 1} \frac{|u_{n}(y)|^{q_{2}}}{|x-y|^{\mu}}\, dy&\leq \left(\int_{|x-y|\leq 1} |u_{n}|^{rq_{2}}\, dy  \right)^{\frac{1}{r}} \left(\int_{|x-y|\leq 1} \frac{1}{|x-y|^{\frac{r\mu}{r-1}}}\, dy \right)^{\frac{r-1}{r}}\nonumber \\
&\leq C \left(\int_{0}^{1} \rho^{N-1-\frac{r \mu}{r-1}}\, d\rho  \right)^{\frac{r-1}{r}}<\infty
\end{align}
in view of $N-1-\frac{r \mu}{r-1}>-1$.
Putting together \eqref{a8} and \eqref{a9} we can see that
$$
\int_{|x-y|\leq 1} \frac{|u_{n}(y)|^{q_{1}}+|u_{n}(y)|^{q_{2}}}{|x-y|^{\mu}}\, dy\leq C \mbox{ for all } x\in \R^{N}
$$
which in view of \eqref{a7} yields \eqref{a6}.
\end{proof}

Since $f$ is only continuous, the next results are very important because they allow us to overcome the non-differentiability of $\mathcal{N}_{\e}$. We begin proving some properties for the functional $\J_{\e}$.
\begin{lem}\label{SW1}
Under assumptions $(V)$ and $(f_1)$-$(f_3)$ we have for any $\e>0$:
\begin{compactenum}[$(i)$]
\item $\J'_{\e}$ maps bounded sets of $\h$ into bounded sets of $\h$.
\item $\J'_{\e}$ is weakly sequentially continuous in $\h$.
\item $\J_{\e}(t_{n}u_{n})\rightarrow -\infty$ as $t_{n}\rightarrow \infty$, where $u_{n}\in K$ and $K\subset \h\setminus\{0\}$ is a compact subset.
\end{compactenum}
\end{lem}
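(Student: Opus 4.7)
The plan is to verify the three claims in order, each by combining the structural bounds on $\J'_\e$ with the specific uniform control on $K(u)$ from Lemma \ref{lemK}.

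For (i), fix $M>0$ and $u\in\h$ with $\|u\|_\e\le M$, and split $\langle\J'_\e(u),v\rangle$ into its Gagliardo, potential, and convolution pieces. The first two are controlled by H\"older's inequality in $L^{p'}(\R^{2N})\times L^p(\R^{2N})$ and in the weighted space $L^{p'}(V(\e x)\,dx)\times L^p(V(\e x)\,dx)$ respectively, each contributing at most $M^{p-1}\|v\|_\e$. For the convolution piece, Lemma \ref{lemK} gives $|K(u)|_\infty\le C_0$ uniformly on bounded sets, whence $(f_1)$ and Theorem \ref{embedding} yield
\[
\Big|\int_{\R^N}K(u)\,f(u)\,v\,dx\Big|\le C_0\int_{\R^N}(|u|^{q_1-1}+|u|^{q_2-1})|v|\,dx\le C(M)\,\|v\|_\e.
\]
Summing the three bounds gives (i).

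For (ii), let $u_n\rightharpoonup u$ in $\h$; after extraction $u_n\to u$ a.e. and $\|u_n\|_\e\le M$. The sequence
\[
U_n(x,y):=\frac{|u_n(x)-u_n(y)|^{p-2}(u_n(x)-u_n(y))}{|x-y|^{(N+sp)/p'}}
\]
is bounded in $L^{p'}(\R^{2N})$ and converges a.e., so by uniqueness of weak limits it converges weakly in $L^{p'}$ to the analogous expression for $u$; pairing against $(v(x)-v(y))/|x-y|^{(N+sp)/p}\in L^p(\R^{2N})$ handles the Gagliardo term, and the potential term is treated identically in the weighted $L^{p'}$-space. For the convolution term, set $v_n:=u_n-u\rightharpoonup 0$. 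Lemma \ref{BLlem} reduces the task to showing $\langle\Sigma'(v_n),v\rangle\to 0$ for each fixed $v\in\h$. Approximating $v$ in the $\h$-norm by the truncation $v_R=v\phi_R$ (Lemma \ref{pp}) and using (i) give
\[
|\langle\Sigma'(v_n),v-v_R\rangle|\le C\|v-v_R\|_\e,
\]
which is made arbitrarily small by choosing $R$ large; for $v_R$ fixed with compact support, Lemma \ref{lemK} together with $(f_1)$ and the compact embedding $\h\hookrightarrow L^{q_i}_{loc}(\R^N)$ from Theorem \ref{embedding} yield $\langle\Sigma'(v_n),v_R\rangle\to 0$. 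A routine $\eta$-$R$ argument then concludes (ii).

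For (iii), integration of the superlinearity $(f_2)$ from $1$ to $t$ yields $F(t)\ge F(1)\,t^{\theta/2}$ for all $t\ge 1$, with $F(1)>0$. Fix a compact $K\subset\h\setminus\{0\}$, a sequence $(u_n)\subset K$, and $t_n\to\infty$; by compactness we may pass to a subsequence with $u_n\to u^*\in K$ strongly in $\h$, and $u^*\neq 0$. Choose $\delta,R>0$ and a measurable $A\subset\{u^*\ge\delta\}\cap\mathcal{B}_R(0)$ with $0<|A|<\infty$; Egorov's theorem together with strong $L^p(\mathcal{B}_R)$ convergence shows $u_n\ge\delta/2$ on $A$, and hence $t_nu_n\ge 1$ on $A$, for $n$ large. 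Then
\[
\Sigma(t_nu_n)\ge \tfrac{1}{2}\iint_{A\times A}\frac{F(t_nu_n(x))F(t_nu_n(y))}{|x-y|^\mu}\,dx\,dy\ge C_*\,t_n^\theta,
\]
where $C_*>0$ since $\mu<N$ makes the double integral finite and positive. Combined with $\|u_n\|_\e\le M$ this gives $\J_\e(t_nu_n)\le \tfrac{M^p}{p}t_n^p-C_*\,t_n^\theta\to-\infty$ as $\theta>p$, and a standard subsequence-contradiction argument upgrades this to the full sequence.

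The main obstacle is item (ii): the convolution part of $\J'_\e$ is not weakly continuous by soft arguments alone, and it is the Brezis--Lieb-type splitting of Lemma \ref{BLlem}, together with the uniform $L^\infty$-bound on $K(v_n)$ from Lemma \ref{lemK} and the truncation of Lemma \ref{pp}, that makes the passage to the limit possible. In (iii), the standard sign convention $f\equiv 0$ on $(-\infty,0]$ adopted for this type of problem is implicitly used so that $u^*$ has a nontrivial positive part and the lower bound on $F$ is genuinely useful.
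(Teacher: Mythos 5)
Your proof of (i) follows the paper's route exactly: uniform control of $|K(u)|_\infty$ from Lemma~\ref{lemK} combined with $(f_1)$ and the embeddings. For (iii) the underlying idea is the same as the paper's: compactness gives a strong limit $u^*\neq 0$, integrating $(f_2)$ gives $F(t)\ge F(1)\,t^{\theta/2}$ for $t\ge 1$, and one extracts a lower bound $\Sigma(t_nu_n)\ge C_*t_n^{\theta}$ so that the $t_n^{\theta}$-term dominates; the paper does this in one line by Fatou's lemma applied to $t\mapsto F(t_nu_n)/t_n^{\theta/2}$, whereas you make the quantitative lower bound explicit through an Egorov set. Both arguments share the same implicit hypothesis that the limit $u^*$ has a nontrivial positive part; your closing remark hints at this, but the sign convention $f\equiv 0$ on $(-\infty,0]$ by itself does not force $u^*$ to be nonnegative somewhere --- it is only in the applications (compact sets lying in $\mathbb{S}_\e$ that come from $\mathcal{N}_\e$, where $\|u\|_\e^p=\int K(u)f(u)u>0$) that this is automatic.

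The genuinely different part is (ii). The paper argues directly on test functions $v\in C^\infty_c(\R^N)$: boundedness of $F(u_n)$ in $L^{\frac{2N}{2N-\mu}}$ plus a.e.\ convergence gives $F(u_n)\rightharpoonup F(u)$ there, the Hardy--Littlewood--Sobolev bound makes $w\mapsto\frac{1}{|x|^\mu}*w$ a bounded linear operator into $L^{\frac{2N}{\mu}}$, strong local convergence of $f(u_n)$ then yields convergence of the convolution term, and density of $C^\infty_c(\R^N)$ in $\h$ finishes. You instead route through the Brezis--Lieb splitting of Lemma~\ref{BLlem} to reduce to $\langle\Sigma'(v_n),v\rangle\to 0$ with $v_n=u_n-u\rightharpoonup 0$, handle the tail of $v$ via the truncation Lemma~\ref{pp} and the uniform bound coming from the proof of (i), and handle the compactly supported piece via Lemma~\ref{lemK} and local compactness. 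Your version is correct but heavier machinery than strictly needed; the paper's argument avoids both the Brezis--Lieb lemma and the truncation, buying the result at the cost only of the HLS theorem and a density step. One small bookkeeping point in your reduction: Lemma~\ref{BLlem} is stated for $\|\varphi\|_\e\le 1$, so one should normalize, and one should say explicitly that weak convergence in $\h$ gives weak convergence in $W^{s,p}(\R^N)$ so that the hypothesis of Lemma~\ref{BLlem} is met; neither affects the validity of the argument.
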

\begin{proof}
$(i)$ Let $(u_{n})$ be a bounded sequence in $\h$ and $v \in \h$. Then, from $(f_{1})$ and Lemma \ref{lemK}  we deduce that
\begin{align*}
\langle \J'_{\e}(u_{n}), v \rangle &\leq C \|u_{n}\|_{\e}^{p-1} \|v\|_{\e} + C \|u_{n}\|_{\e}^{q_{1}-1} \|v\|_{\e}+C\|u_{n}\|_{\e}^{q_{2}-1} \|v\|_{\e} \leq C.
\end{align*}

$(ii)$ Assume that $u_{n}\rightharpoonup u$ in $\h$ and take $v\in C^{\infty}_{c}(\R^{N})$.
Then, we know that
\begin{align*}
\langle \J'_{\e}(u_{n}), v\rangle&=\iint_{\R^{2N}} \frac{|u_{n}(x)-u_{n}(y)|^{p-2}(u_{n}(x)-u_{n}(y))(v(x)-v(y))}{|x-y|^{N+sp}}\, dx dy+\int_{\R^{N}} V(\e x)|u_{n}|^{p-2}u_{n}v \,dx\\
&\quad -\int_{\R^{N}}\int_{\R^{N}} \frac{F(u_{n}(y))}{|x-y|^{\mu}}f(u_{n}(x)) v(x) \, dx dy.
\end{align*}
The weak convergence gives that 
\begin{align*}
\iint_{\R^{2N}} &\frac{|u_{n}(x)-u_{n}(y)|^{p-2}(u_{n}(x)-u_{n}(y))(v(x)-v(y))}{|x-y|^{N+sp}}\, dx dy \\
&\quad \rightarrow \iint_{\R^{2N}} \frac{|u(x)-u(y)|^{p-2}(u(x)-u(y))(v(x)-v(y))}{|x-y|^{N+sp}}\, dx dy
\end{align*}
and
$$
\int_{\R^{N}} V(\e x)|u_{n}|^{p-2}u_{n}v \,dx\rightarrow \int_{\R^{N}} V(\e x)|u|^{p-2}u v \,dx.
$$
Now, the growth conditions on $f$ and the boundedness of $(u_{n})$ in $\h$ imply that $F(u_{n})$ is bounded in $L^{\frac{2N}{2N-\mu}}(\R^{N})$. Moreover, $u_{n}\rightarrow u$ a.e. in $\R^{N}$ and the continuity of $F$ gives that $F(u_{n})\rightarrow F(u)$ a.e. in $\R^{N}$.
Therefore, $F(u_{n})\rightharpoonup F(u)$ in $L^{\frac{2N}{2N-\mu}}(\R^{N})$. Using Theorem \ref{HLS}, we know that the convolution term
$$
\frac{1}{|x|^{\mu}}*w(x)\in L^{\frac{2N}{\mu}}(\R^{N}) \quad \forall w\in L^{\frac{2N}{2N-\mu}}(\R^{N}),
$$
and it is a linear bounded operator from $L^{\frac{2N}{2N-\mu}}(\R^{N})$ to $L^{\frac{2N}{\mu}}(\R^{N})$. Accordingly,
$$
\frac{1}{|x|^{\mu}}*F(u_{n})\rightharpoonup \frac{1}{|x|^{\mu}}*F(u) \mbox{ in } L^{\frac{2N}{\mu}}(\R^{N}).
$$
Since $f$ has subcritical growth, we know that $f(u_{n})\rightarrow f(u)$ in $L^{r}_{loc}(\R^{N})$ for all $r\in [1, \frac{\p}{q_{2}-1})$.
Therefore, we get
$$
\int_{\R^{N}} \left( \frac{1}{|x|^{\mu}}*F(u_{n}) \right)f(u_{n}) v\rightarrow \int_{\R^{N}} \left( \frac{1}{|x|^{\mu}}*F(u) \right)f(u)  v.
$$
In conclusion, $\langle \J_{\e}'(u_{n}), v\rangle\rightarrow \langle \J_{\e}'(u), v\rangle$ for all $v\in C^{\infty}_{c}(\R^{N})$, and using the density of $C^{\infty}_{c}(\R^{N})$ in $\h$ we get the thesis.

$(iii)$ Without loss of generality, we may assume that $\|u\|_{\e}=1$ for each $u\in K$. For $u_{n}\in K$, after passing to a subsequence, we obtain that $u_{n}\rightarrow u\in \mathbb{S}_{\e}$. Then, using $(f_{2})$ and Fatou's Lemma we can see that
\begin{align*}
\J_{\e}(t_{n}u_{n}) \leq C t_{n}^{p}- Ct_{n}^{\theta}  \rightarrow -\infty \, \mbox{ as } n\rightarrow \infty
\end{align*}
where we used that $\theta>p$.
\end{proof}

\begin{lem}\label{SW2}
Under the assumptions of Lemma \ref{SW1}, for $\e>0$ we have:
\begin{compactenum}[$(i)$]
\item for all $u\in \mathbb{S}_{\e}$, there exists a unique $t_{u}>0$ such that $t_{u}u\in \mathcal{N}_{\e}$. Moreover, $m_{\e}(u)=t_{u}u$ is the unique maximum of $\J_{\e}$ on $\h$, where $\mathbb{S}_{\e}=\{u\in \h: \|u\|_{\e}=1\}$.
\item The set $\mathcal{N}_{\e}$ is bounded away from $0$. Furthermore $\mathcal{N}_{\e}$ is closed in $\h$.
\item There exists $\alpha>0$ such that $t_{u}\geq \alpha$ for each $u\in \mathbb{S}_{\e}$ and, for each compact subset $W\subset \mathbb{S}_{\e}$, there exists $C_{W}>0$ such that $t_{u}\leq C_{W}$ for all $u\in W$.
\item For each $u\in \mathcal{N}_{\e}$, $m_{\e}^{-1}(u)=\frac{u}{\|u\|_{\e}}\in \mathcal{N}_{\e}$. In particular, $\mathcal{N}_{\e}$ is a regular manifold diffeomorphic to the sphere in $\h$.
\item $c_{\e}=\inf_{\mathcal{N}_{\e}} \J_{\e}\geq \rho>0$ and $\J_{\e}$ is bounded below on $\mathcal{N}_{\e}$, where $\rho$ is independent of $\e$.
\end{compactenum}
\end{lem}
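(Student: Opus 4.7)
My plan is to treat the five assertions in the stated order, since (ii)--(v) all draw on the analysis of the fiber map in (i). For fixed $u\in\mathbb{S}_\e$ set $h_u(t)=\J_\e(tu)$ for $t\geq 0$. The mountain-pass geometry in Lemma \ref{MPG}(i) gives $h_u(t)>0$ for small $t>0$, and the argument of Lemma \ref{MPG}(ii) (equivalently Lemma \ref{SW1}(iii)) yields $h_u(t)\to-\infty$ as $t\to\infty$, so $h_u$ attains a strictly positive maximum at some $t_u>0$ with $h_u'(t_u)=0$, i.e. $t_u u\in\mathcal{N}_\e$. For uniqueness I would assume $u\geq 0$ (extending $f$ by zero on $(-\infty,0]$, as is standard when seeking positive solutions) and rewrite
$$
\frac{h_u'(t)}{t^{p-1}}=\|u\|_\e^{p}-\iint_{\R^{2N}}\frac{F(tu(y))}{(tu(y))^{p/2}}\cdot\frac{f(tu(x))}{(tu(x))^{p/2-1}}\cdot\frac{u(y)^{p/2}u(x)^{p/2}}{|x-y|^{\mu}}\,dx\,dy.
$$
Hypothesis $(f_3)$ makes $\tau\mapsto f(\tau)/\tau^{p/2-1}$ strictly increasing and, by an elementary computation, also $\tau\mapsto F(\tau)/\tau^{p/2}$ strictly increasing, so the right-hand side is strictly decreasing in $t$ and has a unique zero $t_u$; this simultaneously shows that $t_u u$ is the unique maximizer of $\J_\e$ on the ray through $u$.

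For (ii), any $u\in\mathcal{N}_\e$ satisfies $\|u\|_\e^{p}=\int_{\R^{N}}K(u)f(u)u\,dx$, and by $(f_1)$, Theorem \ref{HLS} and the embedding $\h\hookrightarrow L^{r}(\R^{N})$ this is bounded above by $C(\|u\|_\e^{2q_{1}}+\|u\|_\e^{2q_{2}})$, exactly as in the proof of Lemma \ref{MPG}(i); since $q_1>p/2$, this forces $\|u\|_\e\geq r_{0}>0$ with $r_{0}$ independent of $\e$, and closedness is then immediate from this bound combined with the continuity of $u\mapsto\langle\J_\e'(u),u\rangle$. In (iii) the lower bound is just $t_u=\|t_u u\|_\e\geq r_{0}$; for the upper bound on a compact $W\subset\mathbb{S}_\e$ I argue by contradiction: if $t_{u_n}\to\infty$ with $u_n\in W$, a subsequence satisfies $u_n\to u\in W$, and Lemma \ref{SW1}(iii) gives $\J_\e(t_{u_n}u_n)\to-\infty$, contradicting the uniform lower bound on $\J_\e|_{\mathcal{N}_\e}$ proved in (v). For (iv), $m_\e(u)=t_u u$ is bijective onto $\mathcal{N}_\e$ with inverse $v\mapsto v/\|v\|_\e$ by the uniqueness in (i); continuity of $m_\e$ follows because any subsequential limit $t^{*}$ of $(t_{u_n})$ yields $t^{*}u\in\mathcal{N}_\e$ (using the weak sequential continuity from Lemma \ref{SW1}(ii) together with the strong convergence $u_n\to u$), hence $t^{*}=t_u$ by uniqueness and the whole sequence converges.

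For (v), $(f_2)$ gives $F(\tau)\leq\frac{2}{\theta}f(\tau)\tau$, so on $\mathcal{N}_\e$
$$
\Sigma(u)\leq\frac{1}{\theta}\int_{\R^{N}}K(u)f(u)u\,dx=\frac{1}{\theta}\|u\|_\e^{p},
$$
whence $\J_\e(u)\geq(\tfrac{1}{p}-\tfrac{1}{\theta})\|u\|_\e^{p}\geq(\tfrac{1}{p}-\tfrac{1}{\theta})r_{0}^{p}=:\rho>0$, independent of $\e$ because $r_{0}$ depends only on $S_{*}$ and the constants in $(f_1)$. The step I expect to be the main obstacle is the uniqueness in (i): the double integral arising from the Choquard term distributes the nonlinearity between one copy of $F$ and one copy of $f$, so the monotonicity assumption has to be calibrated to make the joint product $[F(\tau)/\tau^{p/2}]\cdot[f(\tau)/\tau^{p/2-1}]$ strictly increasing, which is exactly what the unusual exponent $p/2-1$ in $(f_3)$ encodes. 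Everything else is either routine bookkeeping or a direct transcription of the Szulkin--Weth framework to the present fractional $p$-Laplacian/Choquard setting.
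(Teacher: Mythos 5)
Your proposal is correct and follows essentially the same Szulkin--Weth scheme as the paper, with all five parts handled by the same fiber-map monotonicity argument, the same HLS/$(f_1)$ lower bound on $\mathcal{N}_\e$, and the same invocation of Proposition 8 of \cite{SW} for (iv). The only deviation is in (v), where you obtain $\J_\e|_{\mathcal{N}_\e}\geq(\tfrac{1}{p}-\tfrac{1}{\theta})r_0^p$ directly from $(f_2)$ and the bound of (ii) rather than combining the small-$t$ estimate with the minimax characterization \eqref{Nguyen1} as the paper does; this is an equally standard route and also quietly repairs the forward reference to (v) that you use inside (iii), since your (v) does not depend on (iii).
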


\begin{proof}
$(i)$ For each $u\in \mathbb{S}_{\e}$ and $t>0$, we define $h(t)=\J_{\e}(tu)$. From the proof of the Lemma \ref{MPG} we know that $h(0)=0$, $h(t)<0$ for $t$ large and $h(t)>0$ for $t$ small. Therefore, $\max_{t\geq 0} h(t)$ is achieved at some $t=t_{u}>0$ satisfying $h'(t_{u})=0$ and $t_{u}u\in \mathcal{N}_{\e}$. Now, we note that $tu\in \mathcal{N}_{\e}$ if and only if
\begin{align}\label{STAR}
\|u\|^{p}_{\e}=\int_{\R^{N}}\int_{\R^{N}} \frac{F(t u(y))}{t^{\frac{p}{2}}|x-y|^{\mu}} \frac{f(tu(x))}{t^{\frac{p}{2}-1}} u(x)\, dx dy.
\end{align}
Using $(f_{2})$ and $(f_{3})$, we can see that the functions 
$$
t\mapsto \frac{F(t)}{t^{\frac{p}{2}}}\quad \mbox{ and }\quad t\mapsto \frac{f(t)}{t^{\frac{p}{2}-1}}
$$
are increasing for $t>0$, so the right hand side in \eqref{STAR} is an increasing function of $t$. Then, it is easy to verify the uniqueness of a such $t_{u}$.

\noindent
$(ii)$ Using Theorem \ref{HLS} and $(f_1)$, we can see that for all $u\in \mathcal{N}_{\e}$
$$
\|u\|^{p}_{\e}\leq C(\|u\|_{\e}^{2q_{1}}+\|u\|^{2q_{2}}_{\e})
$$
so there exists $\kappa>0$ such that 
\begin{equation}\label{consnehari}
\|u\|_{\e}\geq \kappa.
\end{equation}

Now we prove that the set $\mathcal{N}_{\e}$ is closed in $\h$. Let $(u_{n})\subset \mathcal{N}_{\e}$ such that $u_{n}\rightarrow u$ in $\h$. In view of Lemma \ref{SW1} we know that $\J'_{\e}(u_{n})$ is bounded, so we can deduce that
\begin{align*}
\langle \J'_{\e}(u_{n}), u_{n} \rangle - \langle \J'_{\e}(u), u \rangle= \langle \J'_{\e}(u_{n})- \J'_{\e}(u), u \rangle + \langle \J'_{\e}(u_{n}), u_{n}-u \rangle \rightarrow 0
\end{align*}
that is $\langle \J'_{\e}(u), u\rangle=0$. This combined with $\|u\|_{\e}\geq \kappa$ implies that
$$
\|u\|_{\e}= \lim_{n\rightarrow \infty} \|u_{n}\|_{\e} \geq \kappa >0,
$$
that is $u\in \mathcal{N}_{\e}$.

\noindent
$(iii)$ For each $u\in \mathbb{S}_{\e}$ there exists $t_{u}>0$ such that $t_{u}u\in \mathcal{N}_{\e}$. From the proof of $(ii)$, we can see that
$$
t_{u}= \|t_{u}u\|_{\e} \geq \kappa.
$$
Now we prove that $t_{u}\leq C_{W}$ for all $u\in W\subset \mathbb{S}_{\e}$. Assume by contradiction that there exists $(u_{n})\subset W\subset \mathbb{S}_{\e}$ such that $t_{u_{n}}\rightarrow \infty$. Since $W$ is compact, there is $u\in W$ such that $u_{n}\rightarrow u$ in $\h$ and $u_{n}\rightarrow u$ a.e. in $\R^{N}$. Using Lemma \ref{SW1}-$(iii)$, we can infer that $\J_{\e}(t_{u_{n}}u_{n})\rightarrow -\infty$ as $n\rightarrow \infty$, which gives a contradiction because $(f_{2})$ implies that
$$
\J_{\e}(u)|_{\mathcal{N}_{\e}}= \int_{\R^{N}} K(u)\left[\frac{1}{p}f(u)u- \frac{1}{2} F(u)\right]\, dx \geq 0.
$$

\noindent
$(iv)$ Let us define the maps $\hat{m}_{\e}: \h\setminus \{0\} \rightarrow \mathcal{N}_{\e}$ and $m_{\e}: \mathbb{S}_{\e}\rightarrow \mathcal{N}_{\e}$ by 
\begin{align}\label{me}
\hat{m}_{\e}(u)= t_{u}u \quad \mbox{ and } \quad m_{\e}= \hat{m}_{\e}|_{\mathbb{S}_{\e}}.
\end{align}
In the light of $(i)$-$(iii)$, we can apply Proposition 8 in \cite{SW} to deduce that $m_{\e}$ is a homeomorphism between $\mathbb{S}_{\e}$ and $\mathcal{N}_{\e}$ and the inverse of $m_{\e}$ is given by $m_{\e}^{-1}(u)=\frac{u}{\|u\|_{\e}}$. Therefore $\mathcal{N}_{\e}$ is a regular manifold diffeomorphic to $\mathbb{S}_{\e}$.

\noindent
$(v)$ For $\e>0$, $t>0$ and $u\in \h \setminus \{0\}$, we can argue as in Lemma \ref{MPG} to see that
\begin{align*}
\J_{\e}(tu)\geq \frac{t^{p}}{p}\|u\|_{\e}^{p}-  C(t^{2q_{1}} \|u\|_{\e}^{2q_{1}}+t^{q_{2}}\|u\|_{\e}^{2q_{2}}).
\end{align*}
Hence, we can find $\rho>0$ such that $\J_{\e}(tu)\geq \rho>0$ for $t>0$ small enough. On the other hand, using $(i)$-$(iii)$, we know (see \cite{SW}) that
\begin{align}\label{Nguyen1}
c_{\e}= \inf_{u\in \mathcal{N}_{\e}} \J_{\e}(u)= \inf_{u\in \h\setminus \{0\}} \max_{t\geq 0} \J_{\e}(tu) = \inf_{u\in \mathbb{S}_{\e}} \max_{t\geq 0} \J_{\e}(tu)
\end{align}
which yields $c_{\e}\geq \rho$ and $\J_{\e}|_{\mathcal{N}_{\e}}\geq \rho$.
\end{proof}

Now we introduce the functionals $\hat{\Psi}_{\e}: \h\setminus\{0\} \rightarrow \R$ and $\Psi_{\e}: \mathbb{S}_{\e}\rightarrow \R$ defined by
\begin{align*}
\hat{\Psi}_{\e}= \J_{\e}(\hat{m}_{\e}(u)) \quad \mbox{ and } \quad \Psi_{\e}= \hat{\Psi}_{\e}|_{\mathbb{S}_{\e}},
\end{align*}
where $\hat{m}_{\e}(u)= t_{u}u$ is given in \eqref{me}.
As in \cite{SW} we have the following result:

\begin{lem}\label{SW3}
Under the assumptions of Lemma \ref{SW1}, we have that for $\e>0$:
\begin{compactenum}[$(i)$]
\item $\Psi_{\e}\in C^{1}(\mathbb{S}_{\e}, \R)$, and
\begin{equation*}
\Psi'_{\e}(w)v= \|m_{\e}(w)\|_{\e} \J'_{\e}(m_{\e}(w))v \quad \mbox{ for } v\in T_{w}(\mathbb{S}_{\e}).
\end{equation*}
\item $(w_{n})$ is a Palais-Smale sequence for $\Psi_{\e}$ if and only if $(m_{\e}(w_{n}))$ is a Palais-Smale sequence for $\J_{\e}$. If $(u_{n})\subset \mathcal{N}_{\e}$ is a bounded Palais-Smale sequence for $\J_{\e}$, then $(m_{\e}^{-1}(u_{n}))$ is a Palais-Smale sequence for $\Psi_{\e}$.
\item $u\in \mathbb{S}_{\e}$ is a critical point of $\Psi_{\e}$ if and only if $m_{\e}(u)$ is a critical point of $\J_{\e}$. Moreover the corresponding critical values coincide and
\begin{equation*}
\inf_{\mathbb{S}_{\e}} \Psi_{\e}=\inf_{\mathcal{N}_{\e}} \J_{\e}=c_{\e}.
\end{equation*}
\end{compactenum}
\end{lem}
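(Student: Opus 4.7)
The strategy is a direct application of the abstract Nehari manifold framework of Szulkin--Weth \cite{SW}: their main proposition gives exactly the conclusions (i)--(iii) from a $C^1$ functional under structural hypotheses that have already been verified in Lemmas \ref{SW1} and \ref{SW2}. Specifically, Lemma \ref{SW1} furnishes the boundedness of $\J'_\e$ on bounded sets, its weak sequential continuity, and the asymptotic control along compact directions; Lemma \ref{SW2} provides the unique fiber maximum $t_u$, the homeomorphism $m_\e$ with $m_\e^{-1}(u) = u/\|u\|_\e$, the lower bound $\|u\|_\e \geq \kappa > 0$ on $\mathcal{N}_\e$, and the two-sided control $\alpha \leq t_u \leq C_W$ on compact subsets $W \subset \mathbb{S}_\e$. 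So the job reduces to running the Szulkin--Weth argument verbatim in our setting and checking that the identifications make sense for the nonlocal Choquard term.

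For part (i), I would first upgrade the fiber-maximum map $u \mapsto t_u$ to a $C^1$ map from $\h \setminus\{0\}$ into $(0,\infty)$ via the implicit function theorem applied to
$$
G(t,u) := \langle \J'_\e(tu), u\rangle = t^{p-1}\|u\|_\e^{p} - \int_{\R^{N}} \Bigl(\frac{1}{|x|^{\mu}} * F(tu)\Bigr) f(tu)\, u\, dx.
$$
Since $\J_\e \in C^1$ the map $G$ is $C^1$, and $\partial_t G(t_u,u) < 0$ at $t = t_u$ because the right-hand side of the Nehari identity \eqref{STAR} is \emph{strictly} increasing in $t$ by $(f_2)$--$(f_3)$, i.e.\ exactly the monotonicity already exploited for uniqueness in Lemma \ref{SW2}(i). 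Hence $\hat m_\e: u \mapsto t_u u$ is $C^1$, so is $\Psi_\e = \J_\e \circ m_\e$ on $\mathbb{S}_\e$, and differentiating along $v \in T_w \mathbb{S}_\e$ while invoking $\langle \J'_\e(m_\e(w)), m_\e(w)\rangle = 0$ (which makes the $t_w$-variation term drop out) yields the announced formula $\Psi'_\e(w)v = \|m_\e(w)\|_\e\, \J'_\e(m_\e(w))v$.

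For part (ii), combining (i) with the orthogonal splitting of an arbitrary $v \in \h$ into a radial and a tangential piece, and using once more that $\J'_\e(m_\e(w))[m_\e(w)] = 0$, one obtains the norm identity $\|\Psi'_\e(w)\|_{T_w\mathbb{S}_\e^*} = \|m_\e(w)\|_\e \,\|\J'_\e(m_\e(w))\|_{\h^*}$. The bounds $\kappa \leq \|m_\e(w)\|_\e = t_w$ and $t_w \leq C_W$ on compact $W$ from Lemma \ref{SW2}(ii)--(iii) then transfer the Palais--Smale property in both directions. For the converse part, any PS sequence $(u_n) \subset \mathcal{N}_\e$ is automatically bounded in $\h$ since $\J_\e$ is coercive on $\mathcal{N}_\e$ by $(f_2)$, so $w_n = u_n/\|u_n\|_\e$ lies in the compact setting where the identity applies. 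Part (iii) is then an immediate consequence of (ii): $\Psi'_\e(w) = 0$ forces $\J'_\e(m_\e(w))$ to annihilate $T_w\mathbb{S}_\e$, and together with $\J'_\e(m_\e(w))[m_\e(w)] = 0$ this gives $\J'_\e(m_\e(w)) = 0$ in $\h^*$. The coincidence of critical values and the equality $\inf_{\mathbb{S}_\e} \Psi_\e = \inf_{\mathcal{N}_\e} \J_\e = c_\e$ follow from the definition of $\Psi_\e$ together with the min--max characterization \eqref{Nguyen1}.

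The main obstacle I anticipate is the $C^1$ regularity of $\hat m_\e$ when $f$ is merely continuous, since a direct differentiation of $u \mapsto t_u$ would seem to require $f'$; the implicit-function-theorem route bypasses this because only $C^1$ regularity of the already established functional $\J_\e$ is needed, and the strict monotonicity from $(f_2)$--$(f_3)$ provides the nondegeneracy $\partial_t G(t_u,u) \neq 0$ without any extra smoothness on $f$. All remaining ingredients are direct from the earlier lemmas, so no new analytic input beyond Lemmas \ref{SW1}--\ref{SW2} and the Hardy--Littlewood--Sobolev inequality is expected.
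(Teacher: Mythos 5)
The paper does not supply a proof of Lemma~\ref{SW3}; it is cited directly from Szulkin--Weth~\cite{SW} (Proposition~9 and Corollary~10 therein), whose hypotheses have been verified in Lemmas~\ref{SW1} and~\ref{SW2}. Your overall strategy of reducing to that framework is the right one, but the concrete mechanism you propose for part~(i) does not work, and this is the heart of the matter.

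Your claim ``Since $\J_\e\in C^1$ the map $G$ is $C^1$'' is false. Writing $G(t,u)=\langle \J'_\e(tu),u\rangle$, the map $\J'_\e:\h\to\h^*$ is continuous but \emph{not} $C^1$ unless $\J_\e\in C^2(\h,\R)$, which in turn would require $f$ to be differentiable. Concretely, $\partial_t G(t,u)$ is the second derivative of $t\mapsto \J_\e(tu)$, and the nonlinear term produces $f'(tu)$ along the way. Since $f$ is only assumed continuous (this is precisely the novelty emphasized in the introduction, in contrast with~\cite{AY2}), $\partial_t G$ need not exist, and the implicit function theorem cannot be applied. You correctly identified the danger in your closing remark, but the implicit-function-theorem route does \emph{not} bypass it: it needs exactly the same extra smoothness of $f$.

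The Szulkin--Weth proof circumvents this in a genuinely different way, and the crucial point is that one never needs $u\mapsto t_u$ to be $C^1$. Using the two-sided maximality inequalities
\begin{align*}
\J_\e\bigl(t_{w+sz}(w+sz)\bigr)-\J_\e\bigl(t_{w+sz}w\bigr)
\;\le\;
\hat\Psi_\e(w+sz)-\hat\Psi_\e(w)
\;\le\;
\J_\e\bigl(t_{w}(w+sz)\bigr)-\J_\e\bigl(t_{w}w\bigr),
\end{align*}
the mean value theorem applied to the $C^1$ functional $\J_\e$, and the mere \emph{continuity} of $u\mapsto t_u$ (which follows from uniqueness and the compactness in Lemma~\ref{SW2}(iii), again without any IFT), one squeezes the Gâteaux derivative of $\hat\Psi_\e$ and identifies it as $\hat\Psi'_\e(w)z = \|\hat m_\e(w)\|_\e\,\J'_\e(\hat m_\e(w))z / \|w\|_\e$, and then checks continuity of $w\mapsto\hat\Psi'_\e(w)$ directly. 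The intuition you wrote --- ``the $t_w$-variation term drops out'' because of the Nehari condition --- is exactly right, but the correct way to exploit it is the squeeze argument, not a $C^1$ upgrade of $t_u$ via the IFT. A secondary caveat: in part~(ii) the clean norm identity $\|\Psi'_\e(w)\|_{T_w\mathbb{S}_\e^*}=\|m_\e(w)\|_\e\|\J'_\e(m_\e(w))\|_{\h^*}$ is an inner-product fact; in the Banach space $\h$ (with $p\neq 2$) one only gets two-sided comparability with constants controlled by $\|m_\e(w)\|_\e$, which, thanks to $\kappa\le t_u\le C_W$ and coercivity on $\mathcal{N}_\e$, still suffices to transfer the Palais--Smale property both ways, but the statement should be phrased as an equivalence rather than an equality.
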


\noindent
Using a variant of the Mountain Pass Theorem without Palais-Smale condition \cite{W}, we know that there exists a Palais-Smale sequence $(u_{n})\subset \h$ at the level $c_{\e}$ such that
$$
\J_{\e}(u_{n})\rightarrow c_{\e} \mbox{ and } \J'_{\e}(u_{n})\rightarrow 0.
$$
\begin{lem}\label{lemB}
Let $c\in \R$ and $(u_{n})$ be a Palais-Smale sequence of $\J_{\e}$ at level $c$. Then $(u_{n})$ is bounded in $\h$.
\end{lem}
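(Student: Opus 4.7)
The plan is to exploit the Ambrosetti-Rabinowitz type condition $(f_2)$ in the standard way, combining $\J_\e(u_n)$ with a multiple of $\langle \J'_\e(u_n), u_n\rangle$ to extract a positive multiple of $\|u_n\|_\e^p$ while killing the Choquard term.

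Concretely, I would start from the identities
\begin{align*}
\J_\e(u_n) &= \frac{1}{p}\|u_n\|_\e^p - \frac{1}{2}\int_{\R^N} K(u_n) F(u_n)\,dx,\\
\langle \J'_\e(u_n), u_n\rangle &= \|u_n\|_\e^p - \int_{\R^N} K(u_n) f(u_n) u_n\,dx,
\end{align*}
and form the combination
\begin{align*}
\J_\e(u_n) - \frac{1}{\theta}\langle \J'_\e(u_n), u_n\rangle = \left(\frac{1}{p} - \frac{1}{\theta}\right)\|u_n\|_\e^p + \int_{\R^N} K(u_n)\left[\frac{1}{\theta} f(u_n) u_n - \frac{1}{2} F(u_n)\right]dx.
\end{align*}
Hypothesis $(f_2)$ gives $\tfrac{1}{\theta}f(t)t - \tfrac{1}{2}F(t) \geq 0$ for $t>0$ (and the integrand vanishes on $\{u_n\le 0\}$ since we look for positive solutions and $f$ may be taken to vanish on $(-\infty,0]$, or equivalently $F\geq 0$ on $\R$ by $(f_2)$). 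Since $K(u_n)(x) = \int_{\R^N} F(u_n(y))|x-y|^{-\mu}dy \geq 0$, the integral term is nonnegative, so
\begin{equation*}
\left(\frac{1}{p} - \frac{1}{\theta}\right)\|u_n\|_\e^p \leq \J_\e(u_n) - \frac{1}{\theta}\langle \J'_\e(u_n), u_n\rangle.
\end{equation*}

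The right hand side is controlled by the Palais-Smale assumption: $\J_\e(u_n) = c + o_n(1)$, and $|\langle \J'_\e(u_n), u_n\rangle| \leq \|\J'_\e(u_n)\|_{\h^*} \|u_n\|_\e = o_n(1)\|u_n\|_\e$. Since $\theta > p$ by $(f_2)$, the constant $\frac{1}{p}-\frac{1}{\theta}$ is strictly positive, and we obtain
\begin{equation*}
\left(\frac{1}{p}-\frac{1}{\theta}\right)\|u_n\|_\e^p \leq c + o_n(1) + o_n(1)\|u_n\|_\e.
\end{equation*}
Because $p>1$, the superlinear term on the left dominates the term $o_n(1)\|u_n\|_\e$ on the right, and a standard argument (e.g., if $\|u_n\|_\e \to \infty$ along a subsequence, divide by $\|u_n\|_\e^p$ to reach a contradiction) gives boundedness of $(u_n)$ in $\h$.

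There is no real obstacle: the only subtlety is verifying that the Choquard-type contribution has the right sign, which is immediate from the nonnegativity of the Riesz kernel $|x-y|^{-\mu}$ together with $F\geq 0$ and the pointwise inequality supplied by $(f_2)$. Everything else is the usual Ambrosetti-Rabinowitz estimate, modulated only by the fact that in $(f_2)$ the constant in front of $f(t)t$ is $2$ rather than $1$, which is exactly what one needs for a Choquard nonlinearity of the form $\tfrac12 K(u)F(u)$.
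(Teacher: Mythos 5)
Your proof is correct and takes essentially the same route as the paper: the key move in both is to form $\J_\e(u_n)-\frac{1}{\theta}\langle \J'_\e(u_n),u_n\rangle$, observe that $K(u_n)\geq 0$ and that $(f_2)$ makes the bracket $\frac{1}{\theta}f(u_n)u_n-\frac{1}{2}F(u_n)$ pointwise nonnegative, and then use $\theta>p$ to conclude. The only cosmetic difference is that you spell out the final absorption of the $o_n(1)\|u_n\|_\e$ term and the sign discussion of $F$ on the negative half-line, both of which the paper leaves implicit.
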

\begin{proof}
Using assumption $(f_{2})$ (which implies that $K(u_{n})\geq 0$) we have
\begin{align*}
c+o_{n}(1)\|u_{n}\|_{\e} &= \J_{\e}(u_{n})- \frac{1}{\theta} \langle \J'_{\e}(u_{n}), u_{n} \rangle \\
&= \left( \frac{1}{p}- \frac{1}{\theta}\right) \|u_{n}\|_{\e}^{p} + \frac{1}{\theta} \int_{\R^{N}} K(u_{n})\left( f(u_{n})u_{n}- \frac{\theta}{2} F(u_{n})\right) \, dx \\
&\geq \left( \frac{1}{p}- \frac{1}{\theta}\right) \|u_{n}\|_{\e}^{p},
\end{align*}
and being $\theta>p$ we get the thesis.
\end{proof}

\section{The limit problem}

In this section we deal with the autonomous problem associated with \eqref{R}, that is
\begin{equation}\tag{$P_{\mu}$}
\left\{
\begin{array}{ll}
(-\Delta)^{s}_{p} u + \mu |u|^{p-2}u =  \left(\frac{1}{|x|^{\mu}}*F(u)\right)f(u)  \mbox{ in } \R^{N},  \\
u\in W^{s,p}(\R^{N}), \quad u>0 \mbox{ in } \R^{N},
\end{array}
\right.
\end{equation}
where $\mu>0$.
The corresponding functional is given by
\begin{equation*}
\I_{\mu}(u)=\frac{1}{p} \|u\|_{\mu}^{p} - \Sigma(u)
\end{equation*}
which is well defined on the space $\X_{\mu}=W^{s, p}(\R^{N})$ endowed with the norm
\begin{equation*}
\|u\|_{\mu}^{p}:=  [u]_{s,p}^{p}+\mu |u|^{p}_{p}.
\end{equation*}
Hence, $\I_{\mu}\in C^{1}(\X_{\mu}, \R)$ and its differential $\I'_{\mu}$ is given by
\begin{align*}
\langle \I'_{\mu}(u), \varphi \rangle &= \iint_{\R^{2N}} \frac{|u(x)-u(y)|^{p-2}(u(x)- u(y))}{|x-y|^{N+sp}} (\varphi(x)- \varphi(y)) \,dxdy \\
&+ \mu \int_{\R^{N}} |u|^{p-2} u\, \varphi \, dx -\int_{\R^{N}}\int_{\R^{N}} \frac{F(u(y))}{|x-y|^{\mu}}f(u(x)) v(x) \,dx dy
\end{align*}
for any $u, \varphi \in \X_{\mu}$.
Let us define the Nehari manifold associated with $\I_{\mu}$, that is
\begin{equation*}
\mathcal{M}_{\mu}= \left\{u\in \X_{\mu}\setminus \{0\} : \langle \I'_{\mu}(u), u\rangle =0  \right\}.
\end{equation*}
Arguing as in Section $3$ we can prove the following lemma.

\begin{lem}\label{SW2A}
Under the assumptions of Lemma \ref{SW1}, for $\mu>0$ we have:
\begin{compactenum}[$(i)$]
\item for all $u\in \mathbb{S}_{\mu}$, there exists a unique $t_{u}>0$ such that $t_{u}u\in \M_{\mu}$. Moreover, $m_{\mu}(u)=t_{u}u$ is the unique maximum of $\I_{\mu}$ on $\h$, where $\mathbb{S}_{\mu}=\{u\in \X_{\mu}: \|u\|_{\mu}=1\}$.
\item The set $\M_{\mu}$ is bounded away from $0$. Furthermore $\X_{\mu}$ is closed in $\X_{\mu}$.
\item There exists $\alpha>0$ such that $t_{u}\geq \alpha$ for each $u\in \mathbb{S}_{\mu}$ and, for each compact subset $W\subset \mathbb{S}_{\mu}$, there exists $C_{W}>0$ such that $t_{u}\leq C_{W}$ for all $u\in W$.
\item $\mathcal{M}_{\mu}$ is a regular manifold diffeomorphic to the sphere in $\X_{\mu}$.
\item $d_{\mu}=\inf_{\mathcal{M}_{\mu}} \I_{\mu}>0$ and $\I_{\mu}$ is bounded below on $\M_{\mu}$ by some positive constant.
\item $\I_{\mu}$ is coercive on $\M_{\mu}$.
\end{compactenum}
\end{lem}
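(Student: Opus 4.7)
My plan is to observe that the autonomous functional $\I_\mu$ differs from $\J_\e$ only in replacing the variable potential $V(\e x)$ by the positive constant $\mu$, so items (i)--(v) are direct adaptations of Lemma \ref{SW2}, while only coercivity (vi) requires a genuinely new (but short) argument.

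For part (i), fix $u\in \mathbb{S}_\mu$ and set $h(t)=\I_\mu(tu)$. Reproducing the Mountain Pass geometry of Lemma \ref{MPG} (applied now with the $\mu$-norm, which is equivalent to the $\h$-norm when $V\equiv \mu$), one shows $h(0)=0$, $h(t)>0$ for $t$ small and $h(t)\to -\infty$ as $t\to\infty$, so $h$ has a maximum at some $t_u>0$. The equation $h'(t)=0$ is equivalent to
\begin{equation*}
\|u\|_\mu^p = \int_{\R^N}\int_{\R^N} \frac{F(tu(y))}{t^{p/2}|x-y|^\mu}\,\frac{f(tu(x))}{t^{p/2-1}}\,u(x)\,dx\,dy,
\end{equation*}
and $(f_2)$--$(f_3)$ imply that $t\mapsto F(t)/t^{p/2}$ and $t\mapsto f(t)/t^{p/2-1}$ are strictly increasing on $(0,\infty)$, which yields uniqueness of $t_u$. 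For (ii), the Hardy--Littlewood--Sobolev inequality together with $(f_1)$ gives $\|u\|_\mu^p\leq C(\|u\|_\mu^{2q_1}+\|u\|_\mu^{2q_2})$ on $\M_\mu$, so since $2q_1,2q_2>p$ one has $\|u\|_\mu\geq \kappa>0$; closedness then follows by weak sequential continuity of $\I'_\mu$ (the counterpart of Lemma \ref{SW1}(ii), which goes through identically since $V(\e x)$ was only used to pass to the limit in a pointwise convergent integral against a bounded potential). Parts (iii) and (iv) then transcribe verbatim: the lower bound $t_u\geq \kappa$ is a reformulation of (ii), the upper bound $t_u\leq C_W$ on compact sets $W$ follows from Lemma \ref{SW1}(iii) via contradiction together with the nonnegativity $\I_\mu(u)|_{\M_\mu}\geq 0$ coming from $(f_2)$, and the diffeomorphism $m_\mu: \mathbb{S}_\mu\to \M_\mu$ is obtained applying Proposition 8 of \cite{SW}. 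For (v), the minimax identity
\begin{equation*}
d_\mu = \inf_{\M_\mu}\I_\mu = \inf_{u\in \X_\mu\setminus\{0\}}\max_{t\geq 0}\I_\mu(tu) = \inf_{u\in \mathbb{S}_\mu}\max_{t\geq 0}\I_\mu(tu)
\end{equation*}
together with the Mountain Pass geometry forces $d_\mu\geq \rho>0$.

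The only genuinely new statement is (vi). The plan here is to exploit $(f_2)$ directly on the Nehari constraint. For $u\in \M_\mu$ one has $\|u\|_\mu^p = \int_{\R^N} K(u)\,f(u)u\,dx$, while $(f_2)$ gives $\tfrac{1}{2}F(t)\leq \tfrac{1}{\theta}f(t)t$ for $t>0$; since $K(u)\geq 0$, this yields
\begin{equation*}
\Sigma(u) = \frac{1}{2}\int_{\R^N} K(u) F(u)\,dx \leq \frac{1}{\theta}\int_{\R^N} K(u) f(u)u\,dx = \frac{1}{\theta}\|u\|_\mu^p.
\end{equation*}
Therefore
\begin{equation*}
\I_\mu(u) = \frac{1}{p}\|u\|_\mu^p - \Sigma(u) \geq \left(\frac{1}{p} - \frac{1}{\theta}\right)\|u\|_\mu^p,
\end{equation*}
and since $\theta>p$ the coefficient is strictly positive, which gives $\I_\mu(u)\to +\infty$ as $\|u\|_\mu\to\infty$ on $\M_\mu$.

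I do not anticipate a real obstacle: everything in (i)--(v) is a straightforward transcription once one checks that the weak-sequential-continuity arguments for the convolution term (Lemma \ref{SW1}(ii) and Lemma \ref{BLlem}) do not use the specific form of $V(\e x)$, and (vi) is a one-line consequence of the Ambrosetti--Rabinowitz-type condition $(f_2)$. The mildest care is needed in (iii) to ensure that the compactness argument for the upper bound on $t_u$ still uses only the fact that bounded sequences converging a.e.\ give, via Lemma \ref{SW1}(iii), $\I_\mu(t_n u_n)\to -\infty$ when $t_n\to\infty$.
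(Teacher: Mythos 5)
Your proposal is correct and follows the same route the paper takes: the paper gives no separate proof for this lemma (it says only ``Arguing as in Section 3''), and you carry out exactly that adaptation, verbatim for (i)--(v) and with the short Ambrosetti--Rabinowitz-type computation $\I_\mu(u)=\I_\mu(u)-\tfrac{1}{\theta}\langle\I'_\mu(u),u\rangle\geq(\tfrac1p-\tfrac1\theta)\|u\|_\mu^p$ for the new coercivity item (vi), which matches the computation already present in Lemma~\ref{lemB}.
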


Now we define the following functionals $\hat{\Psi}_{\mu}: \X_{\mu}\setminus\{0\} \rightarrow \R$ and $\Psi_{\mu}: \mathbb{S}_{\mu}\rightarrow \R$ by 
\begin{align*}
\hat{\Psi}_{\mu}= \I_{\mu}(\hat{m}_{\mu}(u)) \quad \mbox{ and } \quad \Psi_{\mu}= \hat{\Psi}_{\mu}|_{\mathbb{S}_{\mu}}.
\end{align*}

\begin{lem}\label{SW3A}
Under the assumptions of Lemma \ref{SW1}, we have that for $\mu>0$:
\begin{compactenum}[$(i)$]
\item $\Psi_{\mu}\in C^{1}(\mathbb{S}_{\mu}, \R)$, and
\begin{equation*}
\Psi'_{\mu}(w)v= \|m_{\mu}(w)\|_{\mu} \I'_{\mu}(m_{\mu}(w))v \quad \mbox{ for } v\in T_{w}(\mathbb{S}_{\mu}).
\end{equation*}
\item $(w_{n})$ is a Palais-Smale sequence for $\Psi_{\mu}$ if and only if $(m_{\mu}(w_{n}))$ is a Palais-Smale sequence for $\I_{\mu}$. If $(u_{n})\subset \M_{\mu}$ is a bounded Palais-Smale sequence for $\I_{\mu}$, then $(m_{\mu}^{-1}(u_{n}))$ is a Palais-Smale sequence for $\Psi_{\mu}$.
\item $u\in \mathbb{S}_{\mu}$ is a critical point of $\Psi_{\mu}$ if and only if $m_{\mu}(u)$ is a critical point of $\I_{\mu}$. Moreover the corresponding critical values coincide and
\begin{equation*}
\inf_{\mathbb{S}_{\mu}} \Psi_{\mu}=\inf_{\M_{\mu}} \I_{\mu}=d_{\mu}.
\end{equation*}
\end{compactenum}
\end{lem}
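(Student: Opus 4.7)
The plan is to apply the abstract Szulkin--Weth framework (Propositions 9 and 10 in \cite{SW}) so as to reduce the variational analysis of $\I_{\mu}$ on the non-smooth Nehari manifold $\M_{\mu}$ to standard calculus on the $C^{1}$ sphere $\mathbb{S}_{\mu}\subset \X_{\mu}$. All the hypotheses required by that framework have already been verified in the autonomous setting: the functional $\I_{\mu}$ has the mountain-pass geometry (arguing exactly as in Lemma \ref{MPG}, since Theorem \ref{HLS} and Lemma \ref{lemK} are available on $\X_{\mu}=W^{s,p}(\R^{N})$), $\I'_{\mu}$ maps bounded sets into bounded sets and is weakly sequentially continuous (repeating the argument of Lemma \ref{SW1}), and Lemma \ref{SW2A} provides the unique fiber maximum $\hat m_{\mu}(u)=t_{u}u$, the homeomorphism $m_{\mu}:\mathbb{S}_{\mu}\to \M_{\mu}$, and the two-sided bound $t_{u}\in [\alpha, C_{W}]$ on compact subsets $W\subset\mathbb{S}_{\mu}$.

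For part (i) the idea is to differentiate $\hat\Psi_{\mu}(u)=\I_{\mu}(t_{u}u)$. The uniqueness of $t_{u}>0$ is encoded by the equation $\langle \I'_{\mu}(t_{u}u), t_{u}u\rangle=0$, and strict monotonicity of the left-hand side in $t$ at $t=t_{u}$ (an immediate consequence of $(f_2)$--$(f_3)$, as used in the analog of \eqref{STAR}) together with the implicit function theorem yields $u\mapsto t_{u}\in C^{1}(\X_{\mu}\setminus\{0\})$. Differentiating $\hat\Psi_{\mu}(u)=\I_{\mu}(t_{u}u)$ and using that $\I'_{\mu}(t_{u}u)\cdot u=0$ (so the contribution of $\partial t_{u}$ vanishes) gives
$$
\hat\Psi'_{\mu}(u)v \;=\; t_{u}\,\I'_{\mu}(\hat m_{\mu}(u))v, \qquad u\in\X_{\mu}\setminus\{0\},\ v\in\X_{\mu}.
$$
Restricting to $u=w\in\mathbb{S}_{\mu}$ and $v\in T_{w}\mathbb{S}_{\mu}$ and using $t_{w}=\|m_{\mu}(w)\|_{\mu}$ yields exactly the formula in (i).

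For (ii), the identity in (i) combined with the uniform bound $\|m_{\mu}(w_{n})\|_{\mu}\in[\alpha, C_{W}]$ shows that the $T^{*}\mathbb{S}_{\mu}$-norm of $\Psi'_{\mu}(w_{n})$ and the $T^{*}_{m_{\mu}(w_{n})}\mathbb{S}_{\mu}$-norm of $\I'_{\mu}(m_{\mu}(w_{n}))$ differ only by a bounded factor; the Nehari identity $\langle\I'_{\mu}(m_{\mu}(w_{n})), m_{\mu}(w_{n})\rangle=0$ then upgrades the latter to the full $\X^{*}_{\mu}$-norm of $\I'_{\mu}(m_{\mu}(w_{n}))$, giving the equivalence of Palais--Smale sequences. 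For (iii), if $\Psi'_{\mu}(w)=0$, the identity in (i) forces $\I'_{\mu}(m_{\mu}(w))$ to vanish on $T_{w}\mathbb{S}_{\mu}$, while on the one-dimensional direction spanned by $m_{\mu}(w)$ the Nehari identity again gives zero; since $\X_{\mu}=\R\,m_{\mu}(w)\oplus T_{w}\mathbb{S}_{\mu}$, we conclude $\I'_{\mu}(m_{\mu}(w))=0$ in $\X^{*}_{\mu}$, and the converse is analogous. The equality of critical values and of the infima follows from $\I_{\mu}(m_{\mu}(w))=\hat\Psi_{\mu}(w)=\Psi_{\mu}(w)$ and the bijectivity of $m_{\mu}$, exactly as in \eqref{Nguyen1}.

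The main technical point is the $C^{1}$ dependence $u\mapsto t_{u}$; once this is secured via $(f_2)$--$(f_3)$ as explained above, every remaining step is a verbatim transcription of the proof of Lemma \ref{SW3}, with $V(\e x)$ replaced by the constant $\mu$ and $\h$ by $\X_{\mu}$. No additional compactness or nonlocal estimate beyond those of Section 2 and 3 is required.
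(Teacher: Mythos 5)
Your opening paragraph is the right move: the paper itself proves nothing here beyond "arguing as in Section 3," i.e. it simply invokes Propositions 9 and 10 of \cite{SW}, whose structural hypotheses have been checked in Lemma \ref{SW2A}. Had you stopped there, the proof would be essentially identical to the paper's.

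However, the argument you then substitute for part $(i)$ contains a genuine gap. You propose to obtain $C^{1}$-dependence $u \mapsto t_{u}$ by applying the implicit function theorem to the equation $\langle \I_{\mu}'(t_{u}u), t_{u}u\rangle = 0$. This requires the map $g(t,u) = \langle \I_{\mu}'(tu), tu\rangle = t^{p}\|u\|_{\mu}^{p} - \int_{\R^{N}} K(tu) f(tu)(tu)\,dx$ to be $C^{1}$ in $t$, which in turn requires differentiating $f$. But $f$ is only assumed continuous here (see $(f_{1})$--$(f_{3})$), so $\I_{\mu}$ is $C^{1}$ but not $C^{2}$, and the implicit function theorem does not apply. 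Indeed, the nondifferentiability of the Nehari manifold $\M_{\mu}$ (equivalently, the failure of $u\mapsto t_{u}$ to be $C^{1}$) is precisely the obstacle the paper highlights in the introduction as the reason for adopting the Szulkin--Weth framework rather than the classical Nehari approach of \cite{AY2}. Declaring "$C^{1}$ dependence of $u\mapsto t_{u}$" as "the main technical point" inverts the logic: the content of Proposition 9 in \cite{SW} is that $\hat{\Psi}_{\mu} = \I_{\mu}\circ \hat m_{\mu}$ is $C^{1}$ \emph{despite} $\hat m_{\mu}$ being merely continuous. That proof bounds the difference quotients $\hat\Psi_{\mu}(w+sz) - \hat\Psi_{\mu}(w)$ from above and below by mean-value expressions evaluated at $t_{w+sz}$ and $t_{w}$, exploiting the fiber-maximality property $\I_{\mu}(t_{u}u) = \max_{t>0}\I_{\mu}(tu)$ so that the troublesome $\partial_{u} t_{u}$ term never has to exist. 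Once the Fr\'echet-differentiability of $\hat\Psi_{\mu}$ with the formula $\hat\Psi_{\mu}'(u)v = \frac{\|\hat m_{\mu}(u)\|_{\mu}}{\|u\|_{\mu}}\,\I_{\mu}'(\hat m_{\mu}(u))v$ is obtained this way, your arguments for parts $(ii)$ and $(iii)$ --- transferring Palais--Smale sequences and critical points via the two-sided bounds $\alpha \le t_{u}\le C_{W}$ and the Nehari identity --- are correct and do match the paper's intent. The fix is simple: delete the implicit-function-theorem paragraph and instead cite Propositions 9 and 10 of \cite{SW} directly, as the paper does, after noting that Lemma \ref{SW2A} supplies the required hypotheses.
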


\begin{remark}
As in \eqref{Nguyen1}, from $(i)$-$(iii)$ of Lemma \ref{SW2A}, we can see that $d_{\mu}$ admits the following minimax characterization
\begin{align}\label{Nguyen2}
d_{\mu}= \inf_{u\in \M_{\mu}} \I_{\mu}(u)= \inf_{u\in \X_{\mu}\setminus \{0\}} \max_{t\geq 0} \I_{\mu}(tu) = \inf_{u\in \mathbb{S}_{\mu}} \max_{t\geq 0} \I_{\mu}(tu).
\end{align}
\end{remark}

\begin{lem}\label{lem2.2a}
Let $(u_{n})\subset \M_{\mu}$ be a minimizing sequence for $\I_{\mu}$. Then, $(u_{n})$ is bounded and there exist a sequence $(y_{n})\subset \R^{N}$ and constants $R, \beta>0$ such that
\begin{equation*}
\liminf_{n\rightarrow \infty} \int_{\B_{R}(y_{n})} |u_{n}|^{p} dx \geq \beta >0.
\end{equation*}
\end{lem}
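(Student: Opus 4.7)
Boundedness is immediate from Lemma \ref{SW2A}$(vi)$: since $\I_{\mu}$ is coercive on $\M_{\mu}$ and $\I_{\mu}(u_{n})\to d_{\mu}$, the sequence $(u_{n})$ is bounded in $\X_{\mu}=W^{s,p}(\R^{N})$.

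For the non--vanishing conclusion I argue by contradiction. Suppose that for every $R>0$
$$
\lim_{n\to\infty}\sup_{y\in\R^{N}}\int_{\B_{R}(y)}|u_{n}|^{p}\,dx=0.
$$
By Lemma \ref{Lions}, this forces $u_{n}\to 0$ in $L^{r}(\R^{N})$ for every $r\in(p,\p)$. The goal is to deduce from this that the whole nonlocal term vanishes along $(u_{n})$, which will contradict $u_{n}\in\M_{\mu}$.

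To that end, I apply the Hardy--Littlewood--Sobolev inequality (Theorem \ref{HLS}) with $r=t=\frac{2N}{2N-\mu}$ to estimate
$$
\int_{\R^{N}} K(u_{n})\,f(u_{n})u_{n}\,dx\leq C\,|F(u_{n})|_{t}\,|f(u_{n})u_{n}|_{t}.
$$
Using $(f_{1})$ together with the restriction $p<q_{1}\leq q_{2}<\frac{p(N-\mu)}{N-sp}$, one checks that the exponents $tq_{1}$ and $tq_{2}$ both lie in $(p,\p)$ (this is the same check used in Lemma \ref{BLlem} and in the computation \eqref{a2}). Since $u_{n}\to 0$ in $L^{tq_{1}}(\R^{N})$ and in $L^{tq_{2}}(\R^{N})$, both factors $|F(u_{n})|_{t}$ and $|f(u_{n})u_{n}|_{t}$ tend to $0$, hence
$$
\int_{\R^{N}} K(u_{n})\,f(u_{n})u_{n}\,dx\longrightarrow 0.
$$
Because $u_{n}\in\M_{\mu}$, the identity $\|u_{n}\|_{\mu}^{p}=\int_{\R^{N}}K(u_{n})f(u_{n})u_{n}\,dx$ then yields $\|u_{n}\|_{\mu}\to 0$, which contradicts Lemma \ref{SW2A}$(ii)$ that $\M_{\mu}$ is bounded away from $0$.

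The only non--routine point is verifying that $tq_{1}, tq_{2}\in(p,\p)$ so that Lions' lemma effectively kills both $|F(u_{n})|_{t}$ and $|f(u_{n})u_{n}|_{t}$; but this is exactly the content of the subcriticality hypothesis on $q_{1},q_{2}$ in $(f_{1})$. Everything else is just plugging the Hardy--Littlewood--Sobolev estimate into the Nehari identity.
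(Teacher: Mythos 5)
Your proof is correct and follows essentially the same approach as the paper: boundedness is routine (the paper argues as in Lemma~\ref{lemB}, you invoke the coercivity stated in Lemma~\ref{SW2A}$(vi)$, both fine), then a contradiction argument via Lemma~\ref{Lions} to force the nonlocal term in the Nehari identity to vanish. The only variation is in how you kill that term: the paper uses the uniform $L^{\infty}$ bound $|K(u_n)|_{\infty}\le C_0$ from Lemma~\ref{lemK} to reduce directly to $C(|u_n|_{q_1}^{q_1}+|u_n|_{q_2}^{q_2})\to 0$, whereas you apply Hardy--Littlewood--Sobolev with $r=t=\tfrac{2N}{2N-\mu}$ and observe that $tq_1,tq_2\in(p,\p)$ by the bounds in $(f_1)$. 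Both estimates are legitimate and lead to the same conclusion; your HLS route avoids invoking Lemma~\ref{lemK} (and hence does not rely on the restriction $\mu<sp$ at this particular step), while the paper's route is marginally shorter once Lemma~\ref{lemK} is in hand.
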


\begin{proof}
Arguing as in the proof of Lemma \ref{lemB}, we can see that $(u_{n})$ is bounded in $\X_{\mu}$. 
Now, assume by contradiction that for any $R>0$ it holds
\begin{equation*}
\lim_{n\rightarrow \infty} \sup_{y\in \R^{N}} \int_{\B_{R}(y)} |u_{n}|^{p} dx=0.
\end{equation*}
Since $(u_{n})$ is bounded in $\X_{\mu}$, we can apply Lemma \ref{Lions} to see that
\begin{equation}\label{tv4N}
u_{n}\rightarrow 0 \, \mbox{ in } L^{t}(\R^{N}) \mbox{ for any } t\in (p, \p).
\end{equation}
Since $\langle \I'_{\mu}(u_{n}), u_{n} \rangle =0$ we get
$$
\|u_{n}\|^{p}_{\mu}=\int_{\R^{N}} K(u_{n}) f(u_{n})u_{n}dx.
$$
Taking into account $(f_1)$, Lemma \ref{lemK}, \eqref{tv4N} and the fact that $(u_{n})$ is bounded in $\X_{\mu}$, we have
\begin{align*}
0\leq \|u_{n}\|_{\mu}^{p}\leq C(|u_{n}|_{q_{1}}^{q_{1}}+|u_{n}|_{q_{2}}^{q_{2}})\rightarrow 0
\end{align*}
from which we deduce that $u_{n}\rightarrow 0$ in $\X_{\mu}$.
\end{proof}

Let us conclude this section proving the following existence result for $(P_{\mu})$.
\begin{lem}\label{lem4.3}
For all $\mu>0$, problem $(P_{\mu})$ has at least one positive ground state solution.
\end{lem}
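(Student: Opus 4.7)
The plan is to combine Ekeland's variational principle on the $C^{1}$ manifold $\mathbb{S}_{\mu}$ with a concentration–compactness analysis, adapted to the Choquard nonlinearity via the Brezis–Lieb type decompositions of Section~2. Since $\Psi_{\mu}\in C^{1}(\mathbb{S}_{\mu})$ is bounded below with $\inf_{\mathbb{S}_{\mu}}\Psi_{\mu}=d_{\mu}$ (Lemma~\ref{SW3A}), Ekeland's principle furnishes $(w_{n})\subset\mathbb{S}_{\mu}$ with $\Psi_{\mu}(w_{n})\to d_{\mu}$ and $\Psi'_{\mu}(w_{n})\to 0$. Setting $u_{n}:=m_{\mu}(w_{n})\in\M_{\mu}$, Lemma~\ref{SW3A}(ii) yields a Palais–Smale sequence for $\I_{\mu}$ at the level $d_{\mu}$ which is also minimizing; it is bounded by the Ambrosetti–Rabinowitz type estimate of Lemma~\ref{lemB}. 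Applying Lemma~\ref{lem2.2a} to obtain $(y_{n})\subset\R^{N}$ and $R,\beta>0$ with $\liminf_{n}\int_{\mathcal{B}_{R}(y_{n})}|u_{n}|^{p}\,dx\geq\beta$, the translated sequence $\tilde u_{n}(x):=u_{n}(x+y_{n})$ remains a bounded Palais–Smale sequence at level $d_{\mu}$ by invariance of $\I_{\mu}$. Passing to a subsequence, $\tilde u_{n}\rightharpoonup u$ in $W^{s,p}(\R^{N})$, $\tilde u_{n}\to u$ in $L^{p}_{loc}(\R^{N})$ and a.e., with $u\not\equiv 0$ from the uniform lower bound. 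The weak sequential continuity of $\I'_{\mu}$ (Lemma~\ref{SW1}(ii), with the constant $\mu$ in place of $V(\e x)$) then gives $\I'_{\mu}(u)=0$, so $u\in\M_{\mu}$ and $\I_{\mu}(u)\geq d_{\mu}$.

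The central step is to upgrade weak convergence to strong, which yields $\I_{\mu}(u)=d_{\mu}$. With $v_{n}:=\tilde u_{n}-u$, Lemma~\ref{lemPSY} splits the $\X_{\mu}$-norm and Lemma~\ref{BLlem} splits the convolution term so that
$$\I_{\mu}(\tilde u_{n})=\I_{\mu}(v_{n})+\I_{\mu}(u)+o_{n}(1),$$
hence $\I_{\mu}(v_{n})\to d_{\mu}-\I_{\mu}(u)$. Combining Lemma~\ref{lemVince} (the $p$-Laplacian splitting) with the derivative statement of Lemma~\ref{BLlem}, tested against $\tilde u_{n}$ and $u$, gives moreover $\langle\I'_{\mu}(v_{n}),v_{n}\rangle\to 0$. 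Suppose $\|v_{n}\|_{\mu}\not\to 0$; Lemma~\ref{SW2A}(i) provides $t_{n}>0$ with $t_{n}v_{n}\in\M_{\mu}$, and the monotonicity condition $(f_{3})$ together with $\langle\I'_{\mu}(v_{n}),v_{n}\rangle\to 0$ forces $t_{n}\to 1$. Then $d_{\mu}\leq \I_{\mu}(t_{n}v_{n})\to d_{\mu}-\I_{\mu}(u)$, i.e.\ $\I_{\mu}(u)\leq 0$, contradicting $\I_{\mu}(u)\geq d_{\mu}>0$. Consequently $v_{n}\to 0$ in $\X_{\mu}$ and $\I_{\mu}(u)=d_{\mu}$.

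Positivity is obtained by the standard symmetrization device: after extending $f$ by zero on $(-\infty,0]$, one has $F(|u|)\geq F(u)$ and $[|u|]_{s,p}\leq[u]_{s,p}$, hence $\I_{\mu}(|u|)\leq\I_{\mu}(u)=d_{\mu}$; projecting $|u|$ back onto $\M_{\mu}$ via Lemma~\ref{SW2A}(i) produces a nonnegative ground state, and the strong maximum principle for $(-\Delta)^{s}_{p}$ (cf.~\cite{DPQna}) then yields $u>0$ in $\R^{N}$. The principal obstacle is the strong convergence step: because $(-\Delta)^{s}_{p}$ is nonlinear and the Choquard term is nonlocal, the classical Brezis–Lieb lemma cannot be invoked directly, which is precisely what motivates the preparatory Lemmas~\ref{lemVince}, \ref{lemPSY}, and \ref{BLlem}, and what makes the Nehari-projection contradiction at the end of Step~3 the technically delicate point of the argument.
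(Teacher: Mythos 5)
Your overall strategy (Ekeland on $\mathbb{S}_{\mu}$, translate by the non-vanishing centers $(y_{n})$, extract a weak limit, show it is a critical point and realizes $d_{\mu}$) agrees with the paper up to the point where you identify the level of the weak limit. After that you diverge in a way that introduces a gap. The paper does \emph{not} prove strong convergence of the translated sequence: it notes that the weak limit $v$ is a nontrivial critical point, hence $v\in\M_{\mu}$ and $\I_{\mu}(v)\geq d_{\mu}$, and then gets the reverse inequality directly from Fatou's lemma applied to the nonnegative quantity $\I_{\mu}(v_{n})-\tfrac{1}{p}\langle\I'_{\mu}(v_{n}),v_{n}\rangle$. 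This is shorter and avoids the decomposition machinery entirely at this stage.

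Your alternative route, upgrading to strong convergence via the Brezis--Lieb splittings and a Nehari projection, contains a step that is asserted but not proved: you claim that $(f_{3})$ together with $\langle\I'_{\mu}(v_{n}),v_{n}\rangle\to 0$ ``forces $t_{n}\to 1$'', where $t_{n}v_{n}\in\M_{\mu}$. Pointwise strict monotonicity of $t\mapsto f(t)/t^{p/2-1}$ does not give a uniform-in-$n$ modulus, so $g_{n}(1)\to 0$ and $g_{n}(t_{n})=0$ (with $g_{n}$ the Nehari defect at scale $t$) do not by themselves force $t_{n}\to 1$. Making this rigorous requires showing that $(v_{n})$ is non-vanishing (so that $g_{n}$ drops by a fixed amount between $t=1$ and $t=1+\delta$) and then treating $\limsup t_{n}\leq 1$ and $\liminf t_{n}<1$ by separate arguments --- this is exactly the content of Claim~1 and Cases~1--2 in the proof of Lemma~\ref{lem2.3}, and it is nontrivial. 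You would also need a uniform continuity estimate for $\Sigma$ on bounded sets to pass from $\I_{\mu}(v_{n})$ to $\I_{\mu}(t_{n}v_{n})$, as in \eqref{tv16}. None of this machinery is needed if one uses Fatou as the paper does.

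Your positivity argument (replace $u$ by $|u|$, observe $[|u|]_{s,p}\leq[u]_{s,p}$ and $F(|u|)\geq F(u)$, then reproject onto $\M_{\mu}$) is a valid alternative to the paper's test-against-$v^{-}$ argument; both produce a nonnegative ground state. However, before invoking a strong maximum principle you still need the $L^{\infty}$ and H\"older regularity of the solution (the paper obtains these via Lemma~\ref{lemMoser} and Corollary~5.5 of \cite{IMS}), a step you omit; and the maximum principle is the one in \cite{DPQ}, not \cite{DPQna}, which is used only for the decay estimate at the end of Section~6.
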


\begin{proof}
From $(v)$ of Lemma \ref{SW2A}, we know that $d_{\mu}>0$ for each $\mu>0$. Moreover, if $u\in \M_{\mu}$ verifies $\I_{\mu}(u)=d_{\mu}$, then $m^{-1}_{\mu}(u)$ is a minimizer of $\Psi_{\mu}$ and it is a critical point of $\Psi_{\mu}$. In view of Lemma \ref{SW3A}, we can see that $u$ is a critical point of $\I_{\mu}$. Now we show that there exists a minimizer of $\I_{\mu}|_{\M_{\mu}}$. Applying Ekeland's variational principle there exists a sequence $(\nu_{n})\subset \mathbb{S}_{\mu}$ such that $\Psi_{\mu}(\nu_{n})\rightarrow d_{\mu}$ and $\Psi'_{\mu}(\nu_{n})\rightarrow 0$ as $n\rightarrow \infty$. Let $u_{n}=m_{\mu}(\nu_{n}) \in \M_{\mu}$. Then, thanks to Lemma \ref{SW3A}, $\I_{\mu}(u_{n})\rightarrow c_{\mu}$ and $\I'_{\mu}(u_{n})\rightarrow 0$ as $n\rightarrow \infty$.
Arguing as in Lemma \ref{lemB}, $(u_{n})$ is bounded in $\X_{\mu}$ and $u_{n}\rightharpoonup u$ in $\X_{\mu}$.
From Lemma \ref{lem2.2a}, we can find $(y_{n})\subset \R^{N}$ and $R, \beta>0$ such that
$$
\liminf_{n\rightarrow \infty} \int_{\B_{R}(y_{n})} |u_{n}|^{p} dx \geq \beta >0.
$$ 
Set $v_{n}(x)=u_{n}(x+y_{n})$. Then $\int_{\B_{R}(0)} |v_{n}|^{p} dx \geq \frac{\beta}{2}$. Since $\I_{\mu}$ and $\I'_{\mu}$ are invariant by translation, it holds that $\I_{\mu}(v_{n})\rightarrow d_{\mu}$ and $\I'_{\mu}(v_{n})\rightarrow 0$. Observing that $(v_{n})$ is bounded in $\X_{\mu}$, we may assume that $v_{n}\rightharpoonup v$ in $\X_{\mu}$, for some $v\neq 0$. Arguing as in $(ii)$ of Lemma  \ref{SW1}, we can deduce that $\I'_{\mu}(v)=0$. Since $v\neq 0$, we can deduce that $v\in \mathcal{M}_{\mu}$. Hence, $\I_{\mu}(v)\geq c_{\mu}$ and using Fatou's Lemma we can conclude that $\I_{\mu}(v)=d_{\mu}$. Now, recalling that $f(t)=0$ for $t\leq 0$ and $|x-y|^{q-2}(x-y)(x^{-}-y^{-})\geq |x^{-}-y^{-}|^{q}$ for all $q\geq 1$, we can deduce that $\langle \I'_{\mu}(v), v^{-}\rangle=0$ implies that $v\geq 0$ in $\R^{N}$. Arguing as in Lemma \ref{lemMoser}, we can obtain that $v\in L^{\infty}(\R^{N})$ and applying Corollary $5.5$ in \cite{IMS} we have $v\in C^{0, \alpha}(\R^{N})$. Using the maximum principle in \cite{DPQ} we can conclude that $u>0$ in $\R^{N}$.
\end{proof}

\section{Existence of a ground state solution}

\noindent
In this section we focus on the existence of a solution to \eqref{R} provided that $\e$ is sufficiently small. 
Firstly, we can note that arguing as in Lemma \ref{lem2.2a} we have the following result:
\begin{lem}\label{lem2.2}
Let $(u_{n})\subset \N_{\e}$ be a sequence for $\J_{\e}$ such that $\J_{\e}(u_{n})\rightarrow c$ and $u_{n}\rightharpoonup 0$ in $\h$. Then, one of the following alternatives occurs
\begin{compactenum}[$(a)$]
\item $u_{n}\rightarrow 0$ in $\h$; 
\item there are a sequence $(y_{n})\subset \R^{N}$ and constants $R, \beta>0$ such that 
\begin{equation*}
\liminf_{n\rightarrow \infty} \int_{\B_{R}(y_{n})} |u_{n}|^{p} dx \geq \beta >0. 
\end{equation*}
\end{compactenum}
\end{lem}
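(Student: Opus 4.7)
The plan is to mirror the argument used for minimizing sequences in Lemma \ref{lem2.2a}, adapted to the present non-autonomous setting where $\J_{\e}$ replaces $\I_{\mu}$. The strategy is to assume that alternative $(b)$ fails and then derive alternative $(a)$ via a vanishing-type argument combined with the uniform $L^{\infty}$ bound on $K(u_{n})$ from Lemma \ref{lemK}.

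First I would check that $(u_{n})$ is bounded in $\h$. Since $u_{n}\in \N_{\e}$ one has $\langle \J'_{\e}(u_{n}), u_{n}\rangle = 0$, so by exactly the same computation as in the proof of Lemma \ref{lemB},
$$
c+o_{n}(1) \;=\; \J_{\e}(u_{n}) - \frac{1}{\theta}\langle \J'_{\e}(u_{n}), u_{n}\rangle \;\geq\; \Bigl(\frac{1}{p}-\frac{1}{\theta}\Bigr)\|u_{n}\|_{\e}^{p},
$$
where $(f_{2})$ is used to discard the nonnegative integral involving $K(u_{n})$. Since $\theta>p$, this gives the boundedness of $(u_{n})$ in $\h$.

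Next, suppose that $(b)$ fails, i.e.\ for every $R>0$,
$$
\lim_{n\to\infty}\sup_{y\in \R^{N}} \int_{\B_{R}(y)} |u_{n}|^{p}\,dx \;=\; 0.
$$
By Theorem \ref{embedding} the sequence $(u_{n})$ is bounded in $W^{s,p}(\R^{N})$, so Lemma \ref{Lions} implies $u_{n}\to 0$ strongly in $L^{t}(\R^{N})$ for every $t\in (p,\p)$. In particular, the subcritical bound $p<q_{1}\leq q_{2}<\frac{p(N-\mu)}{N-sp}<\p$ yields $|u_{n}|_{q_{1}}+|u_{n}|_{q_{2}}\to 0$.

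Finally I would exploit the Nehari identity $\|u_{n}\|_{\e}^{p}=\int_{\R^{N}} K(u_{n})\,f(u_{n})\,u_{n}\,dx$. Lemma \ref{lemK} provides $C_{0}>0$ with $|K(u_{n})|_{\infty}\leq C_{0}$ for all $n$, and together with $(f_{1})$ this gives
$$
\|u_{n}\|_{\e}^{p} \;\leq\; C_{0}\int_{\R^{N}} |f(u_{n})\,u_{n}|\,dx \;\leq\; C\bigl(|u_{n}|_{q_{1}}^{q_{1}}+|u_{n}|_{q_{2}}^{q_{2}}\bigr) \;\longrightarrow\; 0,
$$
so $u_{n}\to 0$ in $\h$, which is alternative $(a)$. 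The genuinely nonlocal difficulty has already been absorbed into Lemma \ref{lemK}, so no new obstacle is expected here; the only delicate point is ensuring that the exponents $q_{1},q_{2}$ lie strictly inside $(p,\p)$, which follows from the subcritical restriction in the hypothesis of Theorem \ref{thmA1}.
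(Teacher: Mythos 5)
Your proof is correct and follows essentially the same route as the paper: the paper's proof of Lemma \ref{lem2.2} simply assumes $(b)$ fails, invokes Lemma \ref{Lions} to obtain vanishing in $L^{t}(\R^{N})$ for $t\in(p,\p)$, and then refers to the proof of Lemma \ref{lem2.2a}, which is exactly the Nehari identity plus Lemma \ref{lemK} plus $(f_{1})$ argument you reproduce. (Your preliminary boundedness step is slightly redundant since $u_{n}\rightharpoonup 0$ already forces boundedness, but it does no harm.)
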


\begin{proof}
Assume that $(b)$ does not hold true. Then, for any $R>0$ it holds
\begin{equation*}
\lim_{n\rightarrow \infty} \sup_{y\in \R^{N}} \int_{\B_{R}(y)} |u_{n}|^{p} dx=0. 
\end{equation*}
Since $(u_{n})$ is bounded in $\h$, from Lemma \ref{Lions} it follows that 
\begin{equation}\label{tv4}
u_{n}\rightarrow 0 \mbox{ in } L^{r}(\R^{N}) \mbox{ for any } r\in (p, \p). 
\end{equation}
Then we can proceed as in Lemma \ref{lem2.2a} to get the thesis.
\end{proof}

\noindent
In order to get a compactness result for $\J_{\e}$, we need to prove the following auxiliary lemma.
\begin{lem}\label{lem2.3}
Assume that $V_{\infty}<\infty$ and let $(v_{n})	\subset \mathcal{N}_{\e}$ be a sequence such that $\J_{\e}(v_{n})\rightarrow d$ with $v_{n}\rightarrow 0$ in $\h$. If $v_{n}\nrightarrow 0$ in $\h$, then $d\geq d_{V_{\infty}}$.
\end{lem}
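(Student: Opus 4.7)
The plan is to ``transfer'' the non-vanishing mass of $(v_n)$ to the autonomous limit problem $(P_{V_\infty})$ via a translation argument. Since $(v_n)\subset\mathcal{N}_\e$, $\J_\e(v_n)\to d$, $v_n\rightharpoonup 0$ in $\h$, and $v_n\nrightarrow 0$ in $\h$, Lemma \ref{lem2.2} rules out the vanishing alternative: there exist $R,\beta>0$ and $(y_n)\subset\R^N$ with $\liminf_n\int_{\B_R(y_n)}|v_n|^p\,dx\geq\beta$. Setting $\tilde v_n(x):=v_n(x+y_n)$, the sequence $(\tilde v_n)$ is bounded in $W^{s,p}(\R^N)$, and up to a subsequence $\tilde v_n\rightharpoonup v$ in $W^{s,p}(\R^N)$ with $v\neq 0$ (the compact embedding into $L^p_{\mathrm{loc}}$ on $\B_R(0)$ preserves a definite amount of mass). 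The assumption $v_n\rightharpoonup 0$ in $\h$ forces $|y_n|\to\infty$: otherwise $(y_n)$ would be bounded and translating would force $\tilde v_n\rightharpoonup 0$, contradicting $v\neq 0$.

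Next, by Lemma \ref{SW2A}-(i) I pick $t_n>0$ with $t_n\tilde v_n\in\M_{V_\infty}$, and show that $(t_n)$ is uniformly bounded above and bounded below away from zero. The lower bound $t_n\geq c>0$ follows from the uniform lower bound on $\|\cdot\|_{V_\infty}$ over $\M_{V_\infty}$ (the analogue of \eqref{consnehari}) together with the uniform boundedness of $\|\tilde v_n\|_{V_\infty}$. The upper bound comes from $(f_3)$: the monotonicity displayed in \eqref{STAR} makes the right-hand side of the Nehari identity for $t_n\tilde v_n$ divided by $t_n^p$ strictly increasing in $t_n$ and unbounded as $t_n\to\infty$ on the fixed positive-measure set on which $\tilde v_n$ is controlled below, while the left-hand side $\|\tilde v_n\|_{V_\infty}^p$ stays bounded.

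The crucial comparison step is then: since $v_n\in\mathcal{N}_\e$, Lemma \ref{SW2}-(i) yields $\J_\e(v_n)=\max_{t\geq 0}\J_\e(tv_n)\geq\J_\e(t_n v_n)$. The Gagliardo seminorm and the Choquard term $\Sigma$ are translation-invariant, so the change of variables $x\mapsto x+y_n$ gives
\[
\J_\e(t_n v_n)\;=\;\I_{V_\infty}(t_n\tilde v_n)\;+\;\frac{t_n^p}{p}\int_{\R^N}\bigl[V(\e(x+y_n))-V_\infty\bigr]|\tilde v_n(x)|^p\,dx.
\]
Since $t_n\tilde v_n\in\M_{V_\infty}$, the first term is bounded below by $d_{V_\infty}$, so the proof reduces to showing that the $\liminf$ of the second term is $\geq 0$.

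The hard part is controlling that correction integral, since $V$ is not assumed bounded above. Fix $\delta>0$ and, using $V_\infty=\liminf_{|x|\to\infty}V(x)$, choose $R_\delta>0$ with $V(z)\geq V_\infty-\delta$ whenever $|z|\geq R_\delta$. Split the correction integral according to $|x+y_n|\gtrless R_\delta/\e$. On the outer region the integrand is $\geq -\delta|\tilde v_n|^p$, which contributes at least $-C\delta$ since $|\tilde v_n|_p^p$ is bounded. On the inner (bounded) region, the substitution $z=x+y_n$ rewrites the piece as $\int_{|z|<R_\delta/\e}\bigl[V(\e z)-V_\infty\bigr]|v_n(z)|^p\,dz$; the continuity of $V$ on the compact $\overline{\B_{R_\delta/\e}(0)}$ together with the compact embedding $\h\hookrightarrow L^p_{\mathrm{loc}}(\R^N)$ applied to $v_n\rightharpoonup 0$ force this piece to tend to $0$ as $n\to\infty$. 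Combined with the boundedness of $(t_n)$ and letting $\delta\to 0$, this yields $d=\lim_n\J_\e(v_n)\geq\liminf_n\I_{V_\infty}(t_n\tilde v_n)\geq d_{V_\infty}$.
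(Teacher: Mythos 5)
Your proof is correct, and it takes a genuinely different and arguably cleaner route than the paper's. The crucial structural observation you exploit is the Nehari maximum characterization from Lemma \ref{SW2}-(i): since $v_{n}\in\mathcal{N}_{\e}$, one has $\J_{\e}(v_{n})=\max_{t\geq 0}\J_{\e}(tv_{n})\geq\J_{\e}(t_{n}v_{n})$ for the projection parameter $t_{n}$ onto $\mathcal{M}_{V_{\infty}}$. Combined with the translation-invariance identity
$\J_{\e}(t_{n}v_{n})=\I_{V_{\infty}}(t_{n}\tilde v_{n})+\frac{t_{n}^{p}}{p}\int_{\R^{N}}\bigl[V(\e x)-V_{\infty}\bigr]|v_{n}|^{p}\,dx$,
and with $\I_{V_{\infty}}(t_{n}\tilde v_{n})\geq d_{V_{\infty}}$, the argument reduces to showing the correction integral has nonnegative $\liminf$ once $t_{n}$ is known to be bounded. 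In contrast, the paper proves the stronger statement $\limsup_{n}t_{n}\leq 1$ and then runs a two-case analysis (Case~1: $t_{n}\to 1$, where the delicate estimate $\Sigma(t_{n}v_{n})-\Sigma(v_{n})=o_{n}(1)$ is needed; Case~2: $t_{n}\to t_{0}<1$, where the monotonicity of $t\mapsto\frac{1}{p}f(t)t-\frac{1}{2}F(t)$ is used). Your route avoids both cases and the $\Sigma$-continuity estimate entirely, at the price of needing the upper bound on $t_{n}$ (which you obtain via $(f_{2})$--$(f_{3})$ and the positive-measure set where $\tilde v_{n}$ stays bounded away from zero, by Egorov; the lower bound on $t_{n}$ that you mention is not actually used). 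Two small points worth making explicit in a final writeup: (i) the integrand on the inner region is controlled only because $V$ is continuous, hence \emph{bounded} on the compact ball $\overline{\B_{R_{\delta}/\e}(0)}$ (this is where $V$ not being assumed bounded above is resolved), and (ii) $|y_{n}|\to\infty$ is a true observation but plays no role in the rest of the argument and can be dropped.
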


\begin{proof}
Let $(t_{n})\subset (0, +\infty)$ be such that $(t_{n}v_{n})\subset \mathcal{M}_{V_{\infty}}$. \\
\textsc{Claim 1}: We aim to prove that 
\begin{equation*}
\limsup_{n\rightarrow \infty} t_{n} \leq 1. 
\end{equation*}
Assume by contradiction that there exist $\delta>0$ and a subsequence, still denoted by $(t_{n})$, such that 
\begin{equation}\label{tv6}
t_{n}\geq 1+ \delta \quad \forall n\in \mathbb{N}. 
\end{equation}
Since $\langle \J'_{\e}(v_{n}), v_{n} \rangle =0$, we have
\begin{equation}\label{tv7}
[v_{n}]^{p}_{s,p} + \int_{\R^{N}} V(\e x) |v_{n}|^{p} dx = \int_{\R^{N}}\int_{\R^{N}} \frac{F(v_{n}(y)) f(v_{n}(x)) v_{n}(x)}{|x-y|^{\mu}}. 
\end{equation}
On the other hand, $t_{n}v_{n} \in \mathcal{M}_{V_{\infty}}$, so we get 
\begin{equation}\label{tv8}
t_{n}^{p} [v_{n}]^{p}_{s,p}+ t_{n}^{p} \int_{\R^{N}} V_{\infty} |v_{n}|^{p} dx = \int_{\R^{N}}\int_{\R^{N}} \frac{F(t_{n}v_{n}(y)) f(t_{n}v_{n}(x)) t_{n}v_{n}(x)}{|x-y|^{\mu}} . 
\end{equation}
Putting together \eqref{tv7} and \eqref{tv8} we obtain 
\begin{equation*}
\int_{\R^{N}} \left( V_{\infty} - V(\e x)\right) |v_{n}|^{p} dx=\int_{\R^{N}}\int_{\R^{N}} \left[\frac{F(t_{n}v_{n}(y)) f(t_{n}v_{n}(x)) v_{n}(x)}{t_{n}^{p-1}|x-y|^{\mu}}-  \frac{F(v_{n}(y)) f(v_{n}(x)) v_{n}(x)}{|x-y|^{\mu}} \right].
\end{equation*}
By assumption $(V)$ we can see that, given $\zeta>0$ there exists $R=R(\zeta)>0$ such that 
\begin{equation}\label{tv9}
V(\e x) \geq V_{\infty} - \zeta \quad \mbox{ for any } |x|\geq R. 
\end{equation}
Now, taking into account the fact that $v_{n}\rightarrow 0$ in $L^{p}(\B_{R}(0))$ and the boundedness of $(v_{n})$ in $\h$, we can infer that
\begin{align*}
\int_{\R^{N}} \left( V_{\infty} - V(\e x)\right) |v_{n}|^{p} dx&= \int_{\B_{R}(0)} \left( V_{\infty} - V(\e x)\right) |v_{n}|^{p} dx+ \int_{\B_{R}^{c}(0)} \left( V_{\infty} - V(\e x)\right) |v_{n}|^{p} dx\\
&\leq V_{\infty}\int_{\B_{R}(0)} |v_{n}|^{p} dx + \zeta \int_{\B_{R}^{c}(0)} |v_{n}|^{p} dx\\
&\leq o_{n}(1) + \frac{\zeta}{V_{0}} \int_{\B_{R}^{c}(0)} V(\e x) |v_{n}|^{p} dx \\
&\leq o_{n}(1) + \frac{\zeta}{V_{0}} \|v_{n}\|_{\e}^{p}\leq o_{n}(1)+ \zeta C. 
\end{align*}
Thus, 
\begin{equation}\label{tv10}
\int_{\R^{N}}\int_{\R^{N}} \left[\frac{F(t_{n}v_{n}(y)) f(t_{n}v_{n}(x)) t_{n}v_{n}(x)}{|x-y|^{\mu}}-  \frac{F(v_{n}(y)) f(v_{n}(x)) v_{n}(x)}{|x-y|^{\mu}} \right]    \leq \zeta C +o_{n}(1).
\end{equation}
Since $v_{n} \nrightarrow 0$ in $\h$, we can apply Lemma \ref{lem2.2} to deduce the existence of a sequence $(y_{n})\subset \R^{N}$, and the existence of two positive numbers $\bar{R}, \beta$ such that
\begin{equation}\label{tv11}
\int_{\B_{\bar{R}}(y_{n})} |v_{n}|^{p} dx \geq \beta>0.
\end{equation}
Let us consider $\bar{v}_{n}= v_{n}(x+y_{n})$. Taking into account that $V_{0}<V(\e x)$ and the boundedness of $(v_{n})$ in $\h$, we can see that $(\bar{v}_{n})$ is bounded in $W^{s,p}(\R^{N})$. Then we may assume that $\bar{v}_{n}\rightharpoonup \bar{v}$ in $W^{s,p}(\R^{N})$. By \eqref{tv11} there exists $\Omega \subset \R^{N}$ with positive measure and such that $\bar{v}>0$ in $\Omega$. Using \eqref{tv6}, \eqref{tv10}, and the facts $\frac{f(t)}{t^{\frac{p}{2}-1}}$ and $\frac{F(t)}{t^{\frac{p}{2}}}$ are increasing for $t>0$ in view of $(f_2)$ and $(f_3)$,  we can infer
\begin{align*}
0&<\int_{\Omega} \int_{\Omega} \frac{|v_{n}(x)|^{\frac{p}{2}}|v_{n}(y)|^{\frac{p}{2}}}{|x-y|^{\mu}} \left[\frac{F((1+\delta)v_{n}(y)) f((1+\delta)v_{n}(x)) (1+\delta) v_{n}(x)}{(1+\delta)^{\frac{p}{2}} |v_{n}(x)|^{\frac{p}{2}} (1+\delta)^{\frac{p}{2}} |v_{n}(y)|^{\frac{p}{2}}}-  \frac{F(v_{n}(y)) f(v_{n}(x)) v_{n}(x)}{|v_{n}(y)|^{\frac{p}{2}}|v_{n}(x)|^{\frac{p}{2}}} \right]  \\
&=\int_{\Omega}\int_{\Omega} \left[\frac{F((1+\delta)v_{n}(y)) f((1+\delta)v_{n}(x)) (1+\delta)v_{n}(x)}{(1+\delta)^{p}|x-y|^{\mu}}-  \frac{F(v_{n}(y)) f(v_{n}(x)) v_{n}(x)}{|x-y|^{\mu}} \right]  \\
&\leq \zeta C+o_{n}(1). 
\end{align*}
Taking the limit as $n\rightarrow \infty$ and applying Fatou's Lemma we obtain
\begin{align*}
0<\int_{\Omega}\int_{\Omega} \left[\frac{F((1+\delta)v(y)) f((1+\delta)v(x)) (1+\delta)v(x)}{(1+\delta)^{p}|x-y|^{\mu}}-  \frac{F(v(y)) f(v(x)) v(x)}{|x-y|^{\mu}} \right] \leq \zeta C 
\end{align*}
for any $\zeta>0$, and this is a contradiction. \\
Now, we distinguish the following cases: \\
\textsc{Case 1:} Assume that $\limsup_{n\rightarrow \infty} t_{n}=1$. Then there exists $(t_{n})$ such that $t_{n}\rightarrow 1$. Using  $\J_{\e}(v_{n})\rightarrow d$ and $(t_{n}v_{n})\subset \mathcal{M}_{V_{\infty}}$ we have
\begin{align}\label{tv12new}
d+ o_{n}(1)&= \J_{\e}(v_{n})\nonumber \\
&=\J_{\e}(v_{n}) - \I_{V_{\infty}}(t_{n}v_{n})+ \I_{V_{\infty}}(t_{n}v_{n}) \nonumber \\
&\geq \J_{\e}(v_{n}) -\I_{V_{\infty}}(t_{n}v_{n}) + d_{V_{\infty}}.
\end{align}
Now, we note that 
\begin{align}\begin{split}\label{tv12}
\J_{\e}(v_{n}) &-\I_{V_{\infty}}(t_{n}v_{n}) \\
&= \frac{(1-t_{n}^{p})}{p} [v_{n}]^{p}_{s,p}+ \frac{1}{p} \int_{\R^{N}} \left( V(\e x) - t_{n}^{p} V_{\infty}\right) |v_{n}|^{p} dx + \Sigma(t_{n}v_{n})-\Sigma(v_{n}). 
\end{split} \end{align}
Taking into account assumption $(V)$,  $v_{n}\rightarrow 0$ in $L^{p}(\B_{R}(0))$, $t_{n}\rightarrow 1$, \eqref{tv9}, and 
\begin{align*}
V(\e x) - t_{n}^{p} V_{\infty} =\left(V(\e x) - V_{\infty} \right) + (1- t_{n}^{p}) V_{\infty}\geq -\zeta + (1- t_{n}^{p}) V_{\infty} \quad \forall |x|\geq R
\end{align*}
we get
\begin{align}\label{tv13}
\int_{\R^{N}} & \left( V(\e x) - t_{n}^{p} V_{\infty}\right) |v_{n}|^{p} dx \nonumber \\
&= \int_{\B_{R}(0)} \left( V(\e x) - t_{n}^{p} V_{\infty}\right) |v_{n}|^{p} dx+ \int_{\B_{R}^{c}(0)} \left( V(\e x) - t_{n}^{p} V_{\infty}\right) |v_{n}|^{p} dx \nonumber \\
&\geq (V_{0}- t_{n}^{p}V_{\infty}) \int_{\B_{R}(0)} |v_{n}|^{p} dx - \zeta \int_{\B_{R}^{c}(0)} |v_{n}|^{p} dx+ V_{\infty}(1- t_{n}^{p}) \int_{\B_{R}^{c}(0)} |v_{n}|^{p} dx \nonumber \\
&\geq o_{n}(1)- \zeta C, 
\end{align}
Since $(v_{n})$ is bounded in $\h$ and $t_{n}\rightarrow 1$, we can conclude that 
\begin{align}\label{tv14}
(1-t_{n}^{p}) [v_{n}]^{p}_{s,p}= o_{n}(1). 
\end{align}
Putting together \eqref{tv12}, \eqref{tv13} and \eqref{tv14}, we get
\begin{align}\label{tv15}
\J_{\e}(v_{n})-\I_{V_{\infty}}(t_{n}v_{n}) = \Sigma(t_{n}v_{n})-\Sigma(v_{n}) +o_{n}(1)- \zeta C. 
\end{align}
On the other hand, using Lemma \ref{lemK}, $(f_1)$ and $t_{n}\rightarrow 1$, we get
\begin{align}\label{tv16}
\Sigma(t_{n}v_{n})-\Sigma(v_{n})=\frac{1}{2}\int_{\R^{N}} K(t_{n}v_{n})(F(t_{n}v_{n})-F(v_{n}))+\frac{1}{2}\int_{\R^{N}} K(v_{n})(F(t_{n}v_{n})-F(v_{n}))=o_{n}(1). 
\end{align}
Hence, taking into account \eqref{tv12new}, \eqref{tv15} and \eqref{tv16}, we can infer that 
\begin{align*}
d+o_{n}(1)\geq o_{n}(1) - \zeta C + d_{V_{\infty}}, 
\end{align*}
and taking the limit as $\zeta\rightarrow 0$ we have $d \geq d_{V_{\infty}}$. \\
\textsc{Case 2:} Suppose that $\limsup_{n\rightarrow \infty} t_{n}=t_{0}<1$. Then we can extract a subsequence, still denoted by $(t_{n})$, such that $t_{n}\rightarrow t_{0}<1$ and $t_{n}<1$ for any $n\in \mathbb{N}$. 
Since $\langle \J'_{V_{\infty}}(t_{n}v_{n}), t_{n}v_{n}\rangle=0$ and $\I_{V_{\infty}}(t_{n}v_{n})\geq d_{V_{\infty}}$, and using the fact that $t\mapsto \frac{1}{p}f(t)t- \frac{1}{2} F(t)$ is increasing for $t>0$ by $(f_2)$ and $(f_3)$, we have 
\begin{align*}
d_{V_{\infty}} &\leq \I_{V_{\infty}}(t_{n}v_{n})  \\
&= \I_{V_{\infty}}(t_{n}v_{n}) - \frac{1}{p} \langle \I'_{V_{\infty}}(t_{n}v_{n}), t_{n}v_{n} \rangle \\ 
&= \frac{1}{p} \int_{\R^{N}} \int_{\R^{N}} \frac{F(t_{n}v_{n}(y)) f(t_{n}v_{n}(x))t_{n} v_{n}(x)}{|x-y|^{\mu}}-\frac{1}{2} \int_{\R^{N}} \int_{\R^{N}} \frac{F(t_{n}v_{n}(y)) F(t_{n}v_{n}(x))}{|x-y|^{\mu}} \\
&\leq \frac{1}{p}\int_{\R^{N}} \int_{\R^{N}} \frac{F(v_{n}(y)) f(v_{n}(x)) v_{n}(x)}{|x-y|^{\mu}}-\frac{1}{2} \int_{\R^{N}} \int_{\R^{N}} \frac{F(v_{n}(y)) F(v_{n}(x))}{|x-y|^{\mu}}  \\
&=\J_{\e}(v_{n})-\frac{1}{p} \langle \J'_{\e}(v_{n}), v_{n}\rangle \\
&=d +o_{n}(1).
\end{align*}
Letting the limit as $n\rightarrow \infty$ we can infer that $d\geq d_{V_{\infty}}$.
\end{proof}

\noindent
In view of the previous lemma, we can show that the Palais-Smale condition holds in a suitable sublevel, related to the ground energy at infinity.
\begin{prop}\label{prop2.1}
Let $(u_{n})\subset \N_{\e}$ be such that $\J_{\e}(u_{n})\rightarrow c$, where $c<d_{V_{\infty}}$ if $V_{\infty}<\infty$ and $c\in \R$ if $V_{\infty}=\infty$. Then $(u_{n})$ has a convergent subsequence in $\h$.
\end{prop}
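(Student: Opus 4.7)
The plan is to split the analysis according to whether $V_\infty$ is finite or infinite, supplementing (either by hypothesis or, if $(u_n)$ is a minimizing sequence, via Ekeland's variational principle for $\Psi_\e$ on $\mathbb{S}_\e$ combined with Lemma \ref{SW3}) the Palais--Smale condition $\J'_\e(u_n)\to 0$ in $\h^*$. Boundedness of $(u_n)$ in $\h$ follows from $\langle \J'_\e(u_n),u_n\rangle=0$ and $(f_2)$, exactly as in Lemma \ref{lemB}. When $V_\infty=\infty$, Theorem \ref{Cheng} provides a compact embedding $\h\hookrightarrow L^r(\R^N)$ for every $r\in[p,\p)$. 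Extracting $u_n\rightharpoonup u$ in $\h$, we obtain $u_n\to u$ in those $L^r$; Theorem \ref{HLS}, $(f_1)$ and Lemma \ref{lemK} then give $\int K(u_n)f(u_n)u_n\to\int K(u)f(u)u$. Since $\|u_n\|_\e^{p}=\int K(u_n)f(u_n)u_n$ (because $u_n\in\N_\e$), and $\|u_n\|_\e\geq\kappa>0$ by Lemma \ref{SW2}(ii), we must have $u\neq 0$, whence $u\in\N_\e$ by Lemma \ref{SW1}(ii) and $\|u\|_\e^{p}=\int K(u)f(u)u$. Thus $\|u_n\|_\e\to\|u\|_\e$, and weak convergence together with the uniform convexity of $\h$ yields $u_n\to u$ in $\h$.

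Assume now $V_\infty<\infty$. Along a subsequence $u_n\rightharpoonup u$ in $\h$; Lemma \ref{SW1}(ii) gives $\J'_\e(u)=0$, and $(f_2)$ together with $\langle\J'_\e(u),u\rangle=0$ produces
$$
\J_\e(u)=\int_{\R^N} K(u)\Bigl[\tfrac{1}{p}f(u)u-\tfrac{1}{2}F(u)\Bigr]dx\geq 0.
$$
Setting $v_n:=u_n-u$, the Brezis--Lieb type splittings in Lemmas \ref{lemPSY} and \ref{BLlem}, together with Lemma \ref{lemVince} handling the nonlinear fractional $p$-Laplacian part, give
$$
\J_\e(v_n)\to c-\J_\e(u)\leq c<d_{V_\infty}, \qquad \J'_\e(v_n)\to 0 \ \text{in } \h^*.
$$

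Suppose, for contradiction, that $v_n\not\to 0$ in $\h$. Adapting Lemma \ref{lem2.2} to $(v_n)$: vanishing combined with $(f_1)$, Lemma \ref{lemK}, Lemma \ref{Lions} and $\langle\J'_\e(v_n),v_n\rangle=o_n(1)$ would force $\|v_n\|_\e\to 0$, absurd. Hence there exist $R,\beta>0$ and $(y_n)\subset\R^N$ with $\int_{\B_R(y_n)}|v_n|^{p}dx\geq\beta$; local compactness and $v_n\rightharpoonup 0$ in $\h$ force $|y_n|\to\infty$, so $\tilde v_n:=v_n(\cdot+y_n)\rightharpoonup\tilde v\neq 0$ in $W^{s,p}(\R^N)$ along a subsequence. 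Choosing $t_n>0$ with $t_n v_n\in\M_{V_\infty}$, I rework Claim~1 of Lemma \ref{lem2.3}, now exploiting $\langle\J'_\e(v_n),v_n\rangle=o_n(1)$ in place of the Nehari identity and controlling $\int(V(\e x)-V_\infty)|v_n|^{p}dx$ through the liminf condition on $V$ together with $v_n\to 0$ in $L^p_{\mathrm{loc}}$, to obtain $\limsup_n t_n\leq 1$; the subsequent case analysis ($t_n\to 1$ or $t_n\to t_0<1$) of Lemma \ref{lem2.3} then delivers $\liminf_n\J_\e(v_n)\geq d_{V_\infty}$, contradicting the level bound above. The main obstacle is precisely this last step: the comparison argument of Lemma \ref{lem2.3} must be carried through for a sequence $v_n$ lying off every Nehari manifold, and the non-translation invariance of $V(\e x)$ must be tamed via the liminf hypothesis on $V_\infty$ combined with local strong convergence of $v_n$.
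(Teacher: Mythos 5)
Your proposal follows essentially the same route as the paper: boundedness from $(f_2)$, passage to $v_n=u_n-u$ with the Brezis--Lieb decompositions of Lemmas \ref{lemPSY}, \ref{BLlem} and \ref{lemVince} to get $\J_\e(v_n)\to c-\J_\e(u)$ and $\J'_\e(v_n)\to 0$, nonnegativity of $\J_\e(u)$ via $(f_2)$, and a case split with $V_\infty=\infty$ handled through Theorem \ref{Cheng} and $V_\infty<\infty$ through the comparison with $d_{V_\infty}$ in Lemma \ref{lem2.3}. Two minor points of departure are worth noting. First, in the coercive case you work with $u_n$ directly, deducing $\|u_n\|_\e\to\|u\|_\e$ and invoking uniform convexity, whereas the paper estimates $\|v_n\|_\e$ directly from $\langle\J'_\e(v_n),v_n\rangle=o_n(1)$; both are fine. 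Second, and more substantively, you correctly flag two implicit assumptions that the paper glosses over: the statement of Proposition \ref{prop2.1} omits the hypothesis $\J'_\e(u_n)\to 0$, which is needed (and holds in every application); and Lemma \ref{lem2.3} is stated for sequences in $\N_\e$, while here $v_n=u_n-u\notin\N_\e$ satisfies only $\langle\J'_\e(v_n),v_n\rangle=o_n(1)$. Your remedy --- rerun Claim~1 and the two-case analysis of Lemma \ref{lem2.3} replacing the exact Nehari identity with its $o_n(1)$ version, the extra error being absorbed into the $o_n(1)$'s already present --- is exactly what is needed and does work; it is the step the paper leaves tacit.
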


\begin{proof}
Arguing as in Lemma \ref{lemB} we can see that $(u_{n})$ is bounded in $\h$. Then, up to a subsequence, we may assume that
\begin{align}\begin{split}\label{conv}
&u_{n}\rightharpoonup u \mbox{ in } \h, \\
&u_{n}\rightarrow u \mbox{ in } L^{q}_{loc}(\R^{N}) \quad \mbox{ for any } q\in [1, \p), \\
&u_{n} \rightarrow u \mbox{ a.e. in } \R^{N}.
\end{split}\end{align}
Using $(f_{1})$, \eqref{conv} and the fact that $C^{\infty}_{c}(\R^{N})$ is dense in $W^{s, p}(\R^{N})$, it is standard to check that $\J'_{\e}(u)=0$.
Now, let $v_{n}= u_{n}-u$.
In view of Lemma \ref{lemPSY} and Lemma \ref{BLlem} we can see that
\begin{align}\label{tv19}
\J_{\e}(v_{n})&=\frac{\|u_{n}\|_{\e}^{p}}{p} - \frac{\|u\|_{\e}^{p}}{p}- \Sigma(u_{n})+ \Sigma(u) + o_{n}(1) \nonumber \\
&=\J_{\e}(u_{n})-\J_{\e}(u)+o_{n}(1) \nonumber\\
&=c-\J_{\e}(u)+o_{n}(1)=:d+o_{n}(1).
\end{align}
Moreover, we can prove that $\J'_{\e}(v_{n})=o_{n}(1)$. Indeed, applying Lemma \ref{lemVince} and Lemma $3.3$ in \cite{MeW} with $z_{n}= v_{n}$ and $w=u$ we get
\begin{equation}\label{D}
\iint_{\R^{2N}} |\mathcal{A}(u_{n}) - \mathcal{A}(v_{n}) - \mathcal{A}(u)|^{p'} dx= o_{n}(1),
\end{equation}
and 
\begin{equation}\label{C}
\int_{\R^{N}} V(\e x) ||v_{n}|^{p-2}v_{n}-|u_{n}|^{p-2}u_{n}+|u|^{p-2}u|^{p'} dx=o_{n}(1).
\end{equation}
From the H\"older inequality, we have for any $\varphi\in \h$ such that $\|\varphi\|_{\e}\leq 1$
\begin{align*}
&|\langle \J'_{\e}(v_{n})-\J'_{\e}(u_{n})+\J'_{\e}(u), \varphi\rangle| \\
&\leq \left(\iint_{\R^{2N}}  |\mathcal{A}(u_{n}) - \mathcal{A}(v_{n}) - \mathcal{A}(u)|^{p'} dx dy\right)^{\frac{1}{p'}} [\varphi]_{s,p} \\
&+\left(\int_{\R^{N}} V(\e x) ||v_{n}|^{p-2}v_{n}-|u_{n}|^{p-2}u_{n}+|u|^{p-2}u|^{p'} dx\right)^{p'} \left(\int_{\R^{N}} V(\e x) |\varphi|^{p}dx \right)^{\frac{1}{p}} \\
&+|\langle \Sigma'(v_{n})-\Sigma'(u_{n})+\Sigma'(u), \varphi \rangle|,
\end{align*}
and in view of Lemma \ref{BLlem}, \eqref{D}, \eqref{C}, $\J'_{\e}(u_{n})=0$ and $\J'_{\e}(u)=0$ we obtain that $\langle \J'_{\e}(u_{n}), \varphi\rangle =o_{n}(1)$ for any $\varphi\in \h$ such that $\|\varphi\|_{\e}\leq 1$.

On the other hand, using  $(f_2)$, we can see that
\begin{equation}\label{tv199}
\J_{\e}(u)=\J_{\e}(u)-\frac{1}{p} \langle\J'_{\e}(u),u\rangle=\frac{1}{p}\int_{\R^{N}} \int_{\R^{N}} \frac{F(u(y)) f(u(x)) u(x)}{|x-y|^{\mu}}-\frac{1}{2} \int_{\R^{N}} \int_{\R^{N}} \frac{F(u(y)) F(u(x))}{|x-y|^{\mu}}\geq 0.
\end{equation}
Now, suppose that $V_{\infty}<\infty$. From \eqref{tv19} and \eqref{tv199} it follows that
$$
d\leq c<d_{V_{\infty}}
$$
which together with Lemma \ref{lem2.3} implies that $v_{n}\rightarrow 0$ in $\h$, that is $u_{n}\rightarrow u$ in $\h$.

Finally, we deal with the case $V_{\infty}=\infty$. Then, by Theorem \ref{Cheng}, we deduce that $v_{n}\rightarrow 0$ in $L^{r}(\R^{N})$ for all $r\in [p, \p)$. This fact combined with $(f_1)$ and Lemma \ref{lemK} yields
\begin{equation}\label{fn0}
\int_{\R^{N}} K(v_{n}) f(v_{n})v_{n}dx=o_{n}(1).
\end{equation}
In view of $\langle\J'_{\e}(v_{n}), v_{n}\rangle=o_{n}(1)$ and  \eqref{fn0} we can infer that
$$
\|v_{n}\|^{p}_{\e}=o_{n}(1),
$$
which gives $u_{n}\rightarrow u$ in $\h$.
\end{proof}

\noindent
Now, we are ready to prove the main result of this section.
\begin{thm}\label{thm3.1}
Assume that $(V)$ and $(f_1)$-$(f_3)$ hold. Then there exists $\e_{0}>0$ such that problem \eqref{R} admits a ground state solution for any $\e\in (0, \e_{0})$.
\end{thm}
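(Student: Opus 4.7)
\emph{Overall strategy.} The plan is to verify the mountain pass geometry of $\J_\e$ (already granted by Lemma~\ref{MPG}), extract a Palais-Smale sequence at the mountain pass level $c_\e$, and then invoke Proposition~\ref{prop2.1} to recover compactness. For this last step I must prove the threshold inequality $c_\e<d_{V_\infty}$ (the case $V_\infty=\infty$ is automatic). The natural route is to compare $c_\e$ with the limit energy $d_{V_0}$ by plugging a suitably rescaled and translated ground state of the autonomous problem $(P_{V_0})$ (which exists by Lemma~\ref{lem4.3}) as a test function into $\J_\e$.

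\emph{Level estimate.} Fix $x_0\in\R^N$ with $V(x_0)=V_0$ (if the infimum is not attained, a standard $\eta$-approximation argument works) and let $w$ be a positive ground state of $(P_{V_0})$, so $\I_{V_0}(w)=d_{V_0}$ and $w\in\M_{V_0}$. Pick $\phi\in C_c^\infty(\R^N)$ with $0\le\phi\le 1$, $\phi\equiv 1$ on $\B_1(0)$ and define
\[
w_\e(x):=\phi(\e x)\,w\!\left(x-\tfrac{x_0}{\e}\right).
\]
By Lemma~\ref{pp} (rescaled) the function $\phi(\e\cdot)w\to w$ in $W^{s,p}(\R^N)$, and the Gagliardo seminorm as well as $\Sigma$ are translation invariant, so $[w_\e]_{s,p}^p\to[w]_{s,p}^p$ and $\Sigma(tw_\e)\to\Sigma(tw)$ uniformly on compact sets of $t$. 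After the change of variable $y=x-x_0/\e$, dominated convergence combined with the continuity of $V$ at $x_0$ yields
\[
\int_{\R^N}V(\e x)|w_\e(x)|^p\,dx=\int_{\R^N}V(\e y+x_0)|\phi(\e y+x_0-x_0)|^p|w(y)|^p\,dy\longrightarrow V_0|w|_p^p.
\]
Hence $\J_\e(tw_\e)\to\I_{V_0}(tw)$ uniformly for $t$ in compact subsets of $(0,\infty)$. Let $t_\e>0$ be the unique maximiser from Lemma~\ref{SW2}(i); the uniform bounds $\alpha\le t_\e\le C_W$ from Lemma~\ref{SW2}(iii) force, along a subsequence, $t_\e\to t_0>0$, and the defining identity for $t_\e$ passes to the limit to give $t_0w\in\M_{V_0}$, i.e.~$t_0=1$. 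Therefore
\[
c_\e\le\max_{t\ge 0}\J_\e(tw_\e)=\J_\e(t_\e w_\e)\longrightarrow\I_{V_0}(w)=d_{V_0}.
\]
When $V_\infty<\infty$, picking a ground state $w_\infty$ of $(P_{V_\infty})$ and the unique $s_*>0$ with $s_*w_\infty\in\M_{V_0}$, one has
\[
d_{V_0}\le\I_{V_0}(s_*w_\infty)=\I_{V_\infty}(s_*w_\infty)-\frac{s_*^p(V_\infty-V_0)}{p}|w_\infty|_p^p<\I_{V_\infty}(s_*w_\infty)\le\I_{V_\infty}(w_\infty)=d_{V_\infty},
\]
so $d_{V_0}<d_{V_\infty}$ strictly. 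Combining the two, there is $\e_0>0$ such that $c_\e<d_{V_\infty}$ whenever $\e\in(0,\e_0)$.

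\emph{Compactness, ground state, positivity.} A standard version of the Mountain Pass theorem without the Palais-Smale condition (Lemma~\ref{MPG} together with the remark preceding Lemma~\ref{lemB}) provides a Palais-Smale sequence $(u_n)\subset\h$ at level $c_\e$, which is bounded by Lemma~\ref{lemB}. By Proposition~\ref{prop2.1}, $(u_n)$ admits a subsequence converging in $\h$ to some $u_\e$ with $\J_\e(u_\e)=c_\e>0$ and $\J'_\e(u_\e)=0$; in particular $u_\e\in\N_\e$ and by Lemma~\ref{SW2}(v) together with the minimax characterisation of $c_\e$ inherited from Lemma~\ref{SW3}, $u_\e$ achieves $\inf_{\N_\e}\J_\e$, i.e.~is a ground state of~\eqref{R}. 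Positivity is obtained exactly as at the end of the proof of Lemma~\ref{lem4.3}: extending $f$ by $0$ on $(-\infty,0]$ (consistent with $(f_2)$), testing $\langle\J'_\e(u_\e),u_\e^-\rangle=0$ and using the elementary inequality $|a-b|^{p-2}(a-b)(a^--b^-)\ge|a^--b^-|^p$ gives $u_\e^-\equiv 0$, hence $u_\e\ge 0$; the Moser iteration performed later in the paper together with the H\"older regularity of \cite{IMS} and the strong maximum principle of \cite{DPQ} upgrade this to $u_\e>0$ in $\R^N$.

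\emph{Expected main obstacle.} The delicate point is the level estimate, specifically ensuring $t_\e\to 1$ and that no mass is lost in the nonlocal Choquard term when the cut-off $\phi(\e\cdot)$ is applied. Controlling $\Sigma(tw_\e)$ along the convergence requires both the uniform bound of Lemma~\ref{lemK} on the convolution potential $K(tw_\e)$ and a careful application of the Hardy-Littlewood-Sobolev inequality (Theorem~\ref{HLS}) combined with the splitting lemma of Brezis-Lieb type, Lemma~\ref{BLlem}; the strict inequality $d_{V_0}<d_{V_\infty}$ is then what converts a mere asymptotic estimate into the compactness range of Proposition~\ref{prop2.1}.
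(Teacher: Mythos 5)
Your overall structure (mountain pass geometry, bounded PS sequence via Lemma~\ref{lemB}, Proposition~\ref{prop2.1} for compactness once $c_\e<d_{V_\infty}$, positivity by testing with $u_\e^-$ and regularity) matches the paper, but your route to the threshold inequality $c_\e<d_{V_\infty}$ is genuinely different. The paper picks an intermediate value $\mu\in(V_0,V_\infty)$, takes a ground state $w$ of $(P_\mu)$, truncates it once and for all to a compactly supported $w_r=\eta_r w$ with $\I_\mu(t_r w_r)<d_{V_\infty}$, and then exploits the \emph{pointwise} inequality $V(\e x)\le\mu$ on $\supp(w_r)$ (valid for all $\e$ small since $V(0)=V_0<\mu$ and $\supp(w_r)$ is a fixed compact set) to conclude $c_\e\le\max_\tau\J_\e(\tau t_r w_r)\le\max_\tau\I_\mu(\tau t_r w_r)<d_{V_\infty}$. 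This avoids any limit in the Choquard term. You instead prove the asymptotic statement $c_\e\to d_{V_0}$ directly with ground states of $(P_{V_0})$ and separately establish $d_{V_0}<d_{V_\infty}$ by the standard monotonicity argument; combining gives the threshold for small $\e$. Both are correct strategies, and your monotonicity chain $d_{V_0}\le\I_{V_0}(s_*w_\infty)<\I_{V_\infty}(s_*w_\infty)\le d_{V_\infty}$ is fine. The paper's route is technically lighter because it freezes $r$ first and never needs to track $\Sigma$ along a limit.

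Two details in your write-up need repair. First, your test function $w_\e(x)=\phi(\e x)\,w(x-x_0/\e)$ has the cut-off centered at the origin while the bump of $w(\cdot-x_0/\e)$ sits at $x_0/\e$; after the change of variable $y=x-x_0/\e$ you get $\phi(\e y+x_0)w(y)$, and if $|x_0|\ge 2$ this tends to $0$, not to $w$. The cut-off must follow the translation (e.g.\ $\phi(\e x-x_0)$, as in the paper's $\Psi_{\e,y}$), or one should simply normalize $x_0=0$ at the outset (which the paper does, and is legitimate by translating $V$). Second, you invoke Lemma~\ref{SW2}(iii) to bound $t_\e$; but that lemma gives a bound $C_W$ that is tied to a fixed $\e$ and a compact subset of the fixed sphere $\mathbb S_\e$, whereas here $\e$ varies and the functions $w_\e$ live on different spheres, so it does not apply directly. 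The correct argument is the one in Lemma~\ref{lemma3.4}: if $t_\e\to\infty$ along a subsequence, combine the identity $t_\e\,w_\e\in\N_\e$ with the superlinearity in $(f_2)$ (namely $F(t)/t^{p/2}\to\infty$ and $f(t)/t^{p/2-1}\to\infty$) and the lower bound on $w$ over a fixed ball to reach a contradiction; the lower bound $t_\e\ge\alpha>0$ then follows from $(f_1)$ and $\|w_\e\|_\e\to\|w\|_{V_0}>0$. With these two fixes your proof is complete and runs in parallel to the more delicate Lemma~\ref{lemma3.4} of the paper rather than to the simpler, static comparison actually used in the paper's proof of Theorem~\ref{thm3.1}.
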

\begin{proof}
From $(v)$ of Lemma \ref{SW2}, we know that $c_{\e}\geq \rho>0$ for each $\e>0$. Moreover, if $u\in \mathcal{N}_{\e}$ verifies $\J_{\e}(u)=c_{\e}$, then $m_{\e}^{-1}(u)$ is a minimizer of $\Psi_{\e}$ and it is a critical point of $\Psi_{\e}$. In view of Lemma \ref{SW3} we can see that $u$ is a critical point of $\J_{\e}$.

Now we show that there exists a minimizer of $\J_{\e}|_{\mathcal{N}_{\e}}$. Applying Ekeland's variational principle  there exists a sequence $(v_{n})\subset \mathbb{S}_{\e}$ such that $\Psi_{\e}(v_{n})\rightarrow c_{\e}$ and $\Psi'_{\e}(v_{n})\rightarrow 0$ as $n\rightarrow \infty$. Let $u_{n}=m_{\e}(v_{n}) \in \mathcal{N}_{\e}$. Then, from Lemma \ref{SW3} we deduce that $\J_{\e}(u_{n})\rightarrow c_{\e}$, $\langle \J'_{\e}(u_{n}), u_{n}\rangle =0$ and $\J'_{\e}(u_{n})\rightarrow 0$ as $n\rightarrow \infty$.
Therefore, $(u_{n})$ is a Palais-Smale sequence for $\J_{\e}$ at level $c_{\e}$.
It is standard to check that $(u_{n})$ is bounded in $\h$ and we denote by $u$ its weak limit. It is easy to verify that $\J_{\e}'(u)=0$.
Let us consider $V_{\infty}=\infty$. Using Lemma \ref{Cheng}, we have $\J_{\e}(u)=c_{\e}$ and $\J'_{\e}(u)=0$.

Now, we deal with the case $V_{\infty}<\infty$. In view of Proposition \ref{prop2.1} it is enough to show that $c_{\e}<d_{V_{\infty}}$ for $\e>0$ small enough. Without loss of generality, we may suppose that 
$$
V(0)=V_{0}=\inf_{x\in \R^{N}} V(x).
$$
Let $\mu\in \R$ be such that $\mu\in (V_{0}, V_{\infty})$. Then we can see that $d_{V_{0}}<d_{\mu}<d_{V_{\infty}}$. 
Let $\eta_{r}\in C^{\infty}_{c}(\R^{N})$ be a cut-off function such that $\eta_{r}=1$ in $\B_{r}(0)$ and $\eta_{r}=0$ in $\B_{2r}^{c}(0)$. Let us define $w_{r}(x):=\eta_{r}(x) w(x)$, where $w\in W^{s,p}(\R^{N})$ is a positive ground state to autonomous problem $(P_{\mu})$, which there exists by Lemma \ref{lem4.3}. Take $t_{r}>0$ such that 
\begin{equation*}
\I_{\mu}(t_{r} w_{r})=\max_{t\geq 0} \I_{\mu}(t w_{r}). 
\end{equation*}
Our next claim consists in finding $r$ sufficiently large such that $\I_{\mu}(t_{r} w_{r})<d_{V_{\infty}}$. \\
Assume by contradiction $\I_{\mu}(t_{r} w_{r})\geq d_{V_{\infty}}$ for any $r>0$. 
Taking into account $w_{r}\rightarrow w$ in $W^{s,p}(\R^{N})$ as $r\rightarrow \infty$ in view of Lemma \ref{pp}, $t_{r}w_{r}$ and $w$ belong to $\mathcal{M}_{\mu}$ and using assumption $(f_{3})$, we have  $t_{r}\rightarrow 1$, and
$$
d_{V_{\infty}}\leq \liminf_{r\rightarrow \infty} \I_{\mu}(t_{r} w_{r})=\I_{\mu}(w)=d_{\mu}
$$
which is impossible since $d_{V_{\infty}}>d_{\mu}$.
Hence, there exists $r>0$ such that
\begin{align}\label{tv200}
\I_{\mu}(t_{r} w_{r})=\max_{\tau\geq 0} \I_{\mu}(\tau (t_{r} w_{r}))\quad  \mbox{ and } \quad \I_{\mu}(t_{r}w_{r})<d_{V_{\infty}}.
\end{align}
Now, condition $(V)$ implies that there exists  $\e_{0}>0$ such that 
\begin{equation}\label{tv20}
V(\e x)\leq \mu \mbox{ for all } x\in \supp(w_{r}), \e\in (0, \e_{0}).
\end{equation} 
Therefore, by \eqref{tv200} and \eqref{tv20}, we deduce that
$$
c_{\e}\leq \max_{\tau\geq 0} \J_{\e}(\tau (t_{r} w_{r}))\leq \max_{\tau\geq 0} \I_{\mu}(\tau (t_{r} w_{r}))=\I_{\mu}(t_{r} w_{r})<d_{V_{\infty}}
$$ 
which implies that $c_{\e}<d_{V_{\infty}}$ for any $\e>0$ sufficiently small.
\end{proof}

\section{Proof of Theorem \ref{thmA1}}
In this last section we investigate the multiplicity of solutions to (\ref{P}). First of all, we need to introduce some useful tools.\\
Fix $\delta>0$, and let $w$ be a ground state solution for $(P_{V_{0}})$ (whose existence is guaranteed by Lemma \ref{lem4.3}). 
Let $\eta$ be a smooth nonincreasing cut-off function defined on $[0, \infty)$ satisfying $\eta(t)=1$ if $0\leq t\leq \frac{\delta}{2}$ and $\eta(t)=0$ if $t\geq \delta$.\\
For any $y\in M$, we define
$$
\Psi_{\e, y}(x)=\eta(|\e x-y|) w\left(\frac{\e x-y}{\e}\right),
$$
and we denote by $t_{\e}>0$ the unique positive number such that 
$$
\max_{t\geq 0} \J_{\e}(t \Psi_{\e, y})=\J_{\e}(t_{\e} \Psi_{\e, y}).
$$
Finally, we consider $\Phi_{\e}: M\rightarrow \N_{\e}$ defined as $\Phi_{\e}(y)=t_{\e} \Psi_{\e, y}$.

\begin{lem}\label{lemma3.4}
The functional $\Phi_{\e}$ satisfies the following limit
$$
\lim_{\e\rightarrow 0} \J_{\e}(\Phi_{\e}(y))=d_{V_{0}} \mbox{ uniformly in } y\in M.
$$
\end{lem}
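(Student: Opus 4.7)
The plan is to reduce everything to computations on a fixed reference function. By the translation invariance of the Gagliardo seminorm, of $L^p$-norms (apart from the twisted potential term), and of the Riesz convolution, the substitution $z=x-y/\e$ turns all quantities attached to $\Psi_{\e,y}$ into integrals of the unshifted function
\[
\hat w_{\e}(z):=\eta(|\e z|)\,w(z),
\]
together with a shifted potential $V(\e z+y)$ in the weighted $L^p$-part of the norm. In particular, $[\Psi_{\e,y}]_{s,p}=[\hat w_\e]_{s,p}$, $\Sigma(t\Psi_{\e,y})=\Sigma(t\hat w_\e)$ for every $t>0$, and $\int V(\e x)|\Psi_{\e,y}|^p\,dx=\int V(\e z+y)|\hat w_\e(z)|^p\,dz$. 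Recognising the cut-off $\phi(x):=\eta(|x|)$ as the function in Lemma \ref{pp} via $\phi_r(z)=\phi(z/r)$ with $r=1/\e$, one has $\hat w_\e=\phi_{1/\e}\,w$, so Lemma \ref{pp} yields $\hat w_\e\to w$ in $W^{s,p}(\R^N)$ as $\e\to 0$. Together with the continuity of $V$ at $y\in M$ (where $V(y)=V_0$), the dominated convergence theorem gives
\[
\int_{\R^N} V(\e z+y)|\hat w_\e(z)|^p\,dz \longrightarrow V_0\int_{\R^N}|w|^p\,dz,
\]
hence $\|\Psi_{\e,y}\|_\e^p\to \|w\|_{V_0}^p$. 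Similarly, Theorem \ref{HLS} combined with $(f_1)$ and the $L^{\frac{2N}{2N-\mu}}$-convergence of $F(t\hat w_\e)\to F(tw)$ gives $\Sigma(t\Psi_{\e,y})\to\Sigma(tw)$ for each fixed $t>0$, and analogously for the derivative paired against $\Psi_{\e,y}$ itself.

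The next ingredient is the control of $t_\e$. Since $t_\e\Psi_{\e,y}\in\N_\e$, dividing the Nehari identity by $t_\e^p$ gives
\[
\|\Psi_{\e,y}\|_\e^{p}=\iint_{\R^{2N}}\frac{F(t_\e\Psi_{\e,y}(y'))}{t_\e^{p/2}}\,\frac{f(t_\e\Psi_{\e,y}(x))}{t_\e^{p/2-1}}\,\frac{\Psi_{\e,y}(x)}{|x-y'|^\mu}\,dx\,dy'.
\]
By $(f_2)$ and $(f_3)$ the right-hand side is strictly increasing in $t_\e$, and $(f_2)$ moreover forces it to blow up as $t_\e\to\infty$. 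Coupled with the already proven convergence of the left-hand side to $\|w\|_{V_0}^p>0$ and the fact that $w\in\M_{V_0}$ satisfies the analogous identity at $t=1$, this rules out $t_\e\to 0$ and $t_\e\to\infty$ (both would contradict the identity in the limit), and then, by uniqueness of the projection onto $\M_{V_0}$ provided by Lemma \ref{SW2A}$(i)$, every convergent subsequence of $t_\e$ has limit $1$. Hence $t_\e\to 1$ as $\e\to 0$.

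Combining the three ingredients $t_\e\to 1$, $\|\Psi_{\e,y}\|_\e^p\to\|w\|_{V_0}^p$, and $\Sigma(t_\e\Psi_{\e,y})\to\Sigma(w)$ (the latter because $t_\e\Psi_{\e,y}\to w$ strongly in $W^{s,p}(\R^N)$ after the shift), one obtains
\[
\J_\e(\Phi_\e(y))=\frac{t_\e^p}{p}\|\Psi_{\e,y}\|_\e^p-\Sigma(t_\e\Psi_{\e,y})\longrightarrow \frac{1}{p}\|w\|_{V_0}^p-\Sigma(w)=\I_{V_0}(w)=d_{V_0}.
\]
For the uniformity in $y\in M$, note that $M$ is compact (it is a closed subset of a bounded set on which $V$ attains its minimum) and every step above depends on $y$ only through the term $V(\e z+y)$, which sits inside a ball of radius $\delta/\e$ in which $\hat w_\e$ is supported. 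If the conclusion failed uniformly, one could extract $\e_n\to 0$ and $y_n\in M$ with $|\J_{\e_n}(\Phi_{\e_n}(y_n))-d_{V_0}|\geq\gamma>0$; passing to a subsequence so that $y_n\to y_*\in M$ and repeating the pointwise argument at $y_*$ (the only $y$-dependence in the limit disappears since $V(y_*)=V_0$) yields the desired contradiction.

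The main obstacle in this plan is the convergence $t_\e\to 1$: one has to rule out vanishing and blow-up using the Ambrosetti--Rabinowitz type condition $(f_2)$ and the monotonicity condition $(f_3)$ in the presence of the Hardy--Littlewood--Sobolev double integral, and then exploit the strict monotonicity of the Nehari identity together with the uniqueness from Lemma \ref{SW2A}$(i)$ to pin down the limit.
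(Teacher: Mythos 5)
Your proposal follows essentially the same strategy as the paper's own proof: the change of variable $z=x-y/\e$ reducing everything to $\hat w_\e=\eta(|\e\cdot|)w$, convergence of the $\e$-norm and of $\Sigma$ via Lemma \ref{pp} and dominated convergence, control of $t_\e$ via the Nehari identity combined with the monotonicity in $(f_3)$, the blow-up from $(f_2)$ and the vanishing from $(f_1)$, the uniqueness of the Nehari projection from Lemma \ref{SW2A}$(i)$ forcing $t_\e\to 1$, and the contradiction argument for uniformity on the compact set $M$. The only detail you leave implicit, which the paper makes explicit in its inequality \eqref{delta}, is that ruling out $t_\e\to\infty$ requires the pointwise lower bound $w\geq w(\bar z)>0$ on $\B_{\delta/2}(0)$ (using continuity and strict positivity of the ground state $w$) so that the double integral genuinely dominates the diverging quantity $\frac{F(t)}{t^{p/2}}\frac{f(t)}{t^{p/2-1}}$; otherwise the argument and the paper's are the same.
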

\begin{proof}
Assume by contradiction that there exist $\delta_{0}>0$, $(y_{n})\subset M$ and $\e_{n}\rightarrow 0$ such that 
\begin{equation}\label{4.41}
|\J_{\e_{n}}(\Phi_{\e_{n}}(y_{n}))-d_{V_{0}}|\geq \delta_{0}.
\end{equation}
Let us note that using the change of variable $z=\frac{\e_{n}x-y_{n}}{\e_{n}}$, we have
\begin{align}\label{defJ}
\J_{\e_{n}}(\Phi_{\e_{n}}(y_{n}))&=\frac{t_{\e_{n}}^{p}}{p}[\eta(|\e_{n} \cdot|)w]^{p}_{s,p}+\frac{t_{\e_{n}}^{p}}{p}\int_{\R^{N}} V(\e_{n} z+y_{n}) (\eta(|\e_{n} z|) w(z))^{p}\, dz \nonumber\\
&-\Sigma(t_{\e_{n}}\eta(|\e_{n} \cdot|)w).
\end{align}
In view of the Dominated Convergence Theorem and Lemma \ref{pp}, it is easy to check that
$$
\lim_{n\rightarrow \infty} \|\Psi_{\e_{n}, y_{n}}\|_{\e_{n}}=\|w\|_{V_{0}}\in (0, \infty)
$$
and
$$
\lim_{n\rightarrow \infty} \Sigma(\Psi_{\e_{n}, y_{n}})=\Sigma(w).
$$

Since $t_{\e_{n}}\Psi_{\e_{n}, y_{n}}\in \mathcal{N}_{\e_{n}}$, we can see that 
\begin{equation*}
t_{\e_{n}}^{p}\|\Psi_{\e_{n}, y_{n}}\|_{\e_{n}}^{p}=\int_{\R^{N}}\int_{\R^{N}} \frac{F(t_{\e_{n}}\Psi_{\e_{n}, y_{n}})f(t_{\e_{n}}\Psi_{\e_{n}, y_{n}})t_{\e_{n}}\Psi_{\e_{n}, y_{n}}}{|x-y|^{\mu}},
\end{equation*}
so we can deduce that
\begin{align}\label{wV0}
\|w\|^{p}_{V_{0}}=\lim_{n\rightarrow \infty} \int_{\R^{N}}\int_{\R^{N}} \frac{F(t_{\e_{n}}\Psi_{\e_{n}, y_{n}})f(t_{\e_{n}}\Psi_{\e_{n}, y_{n}})t_{\e_{n}}\Psi_{\e_{n}, y_{n}}}{t_{\e_{n}}^{p}|x-y|^{\mu}}.
\end{align}
Taking into account $\frac{f(t)}{t^{\frac{p}{2}-1}}$ and $\frac{F(t)}{t^{\frac{p}{2}}}$ are increasing for $t>0$, $\eta=1$ in $\B_{\frac{\delta}{2}}(0)$ and $\B_{\frac{\delta}{2}}(0)\subset \B_{\frac{\delta}{2\e_{n}}}(0)$ for all $n$ big enough, we obtain
\begin{align}\label{delta}
\int_{\R^{N}}\int_{\R^{N}} \frac{F(t_{\e_{n}}\Psi_{\e_{n}, y_{n}})f(t_{\e_{n}}\Psi_{\e_{n}, y_{n}})\Psi_{\e_{n}, y_{n}}}{t^{p-1}_{\e_{n}}|x-y|^{\mu}}\geq |\B_{\frac{\delta}{2}}(0)|\frac{F(t_{\e_{n}}w(\bar{z}))}{(t_{\e_{n}}w(\bar{z}))^{\frac{p}{2}}} \frac{f(t_{\e_{n}}w(\bar{z}))}{(t_{\e_{n}}w(\bar{z}))^{\frac{p}{2}-1}}\int_{\B_{\frac{\delta}{2}}(0)} w(z)^{p}
\end{align}
where $w(\bar{z})=\min_{z\in \overline{\B}_{\frac{\delta}{2}}(0)} w(z)>0$ (we recall that $w\in C(\R^{N})$ and $w>0$ in $\R^{N}$ by Lemma \ref{lem4.3}). \\
Hence, if $t_{\e_{n}}\rightarrow \infty$, we can use $(f_2)$ to see that
$$
\lim_{t\rightarrow \infty} \frac{F(t)}{t^{\frac{p}{2}}}=\lim_{t\rightarrow \infty} \frac{f(t)}{t^{\frac{p}{2}-1}}=\infty
$$
which together with \eqref{wV0} and \eqref{delta} gives a contradiction. Then, there exists $t_{0}\geq 0$ such that $t_{\e_{n}}\rightarrow t_{0}\geq 0$. In particular, from $(f_1)$ and \eqref{wV0}, we can see that $t_{0}>0$.\\
Recalling that $w$ is a ground state to $(P_{V_{0}})$ and using the fact that the maps $\frac{f(t)}{t^{\frac{p}{2}-1}}$ and $\frac{F(t)}{t^{\frac{p}{2}}}$ are increasing for $t>0$, we can conclude that  $t_{\e_{n}}\rightarrow t_{0}=1$.
Accordingly,
$$
\lim_{n\rightarrow \infty} \Sigma(t_{\e_{n}}\eta(|\e_{n} \cdot|)w)=\Sigma(w)
$$
and passing to the limit in \eqref{defJ}, we get
$$
\lim_{n\rightarrow \infty} \J_{\e_{n}}(\Phi_{\e_{n}}(y_{n}))=\I_{V_{0}}(w)=d_{V_{0}},
$$
which contradicts (\ref{4.41}).

\end{proof}

\noindent
For any $\delta>0$, let $\rho=\rho(\delta)>0$ be such that $M_{\delta}\subset \B_{\rho}(0)$.
Let $\Upsilon: \R^{N}\rightarrow \R^{N}$ be defined as $\Upsilon(x)=x$ for $|x|\leq \rho$ and $\Upsilon(x)=\frac{\rho x}{|x|}$ for $|x|\geq \rho$.
Then, we consider the barycenter map $\beta_{\e}: \mathcal{N}_{\e}\rightarrow \R^{N}$ given by
$$
\beta_{\e}(u)=\frac{\int_{\R^{N}} \Upsilon(\e x) |u(x)|^{p}\, dx}{\int_{\R^{N}} |u(x)|^{p} \,dx}.
$$

\begin{lem}\label{lemma3.5}
The function $\beta_{\e}$ verifies the following limit
$$
\lim_{\e \rightarrow 0} \beta_{\e}(\Phi_{\e}(y))=y \mbox{ uniformly in } y\in M.
$$
\end{lem}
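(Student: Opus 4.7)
\medskip

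\noindent\textbf{Proof plan for Lemma \ref{lemma3.5}.}

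The plan is to argue by contradiction. Suppose the claim fails; then there exist $\delta_{0}>0$, a sequence $\e_{n}\to 0$ and points $y_{n}\in M$ such that
$$
|\beta_{\e_{n}}(\Phi_{\e_{n}}(y_{n}))-y_{n}|\geq \delta_{0}\quad \text{for all } n.
$$
Since $M$ is contained in the bounded set $B_{\rho}(0)$ (by the choice of $\rho=\rho(\delta)$ at the start of this section), up to a subsequence we may assume $y_{n}\to y_{0}$ for some $y_{0}\in \overline{M}\subset B_{\rho}(0)$.

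Next, I would unpack $\beta_{\e_{n}}(\Phi_{\e_{n}}(y_{n}))$ using the change of variable $z=(\e_{n}x-y_{n})/\e_{n}$. The factor $t_{\e_{n}}^{p}$ cancels between numerator and denominator, so
$$
\beta_{\e_{n}}(\Phi_{\e_{n}}(y_{n}))-y_{n}=\frac{\displaystyle\int_{\R^{N}}\bigl[\Upsilon(\e_{n}z+y_{n})-y_{n}\bigr]\,\eta(|\e_{n}z|)^{p}\,w(z)^{p}\,dz}{\displaystyle\int_{\R^{N}}\eta(|\e_{n}z|)^{p}\,w(z)^{p}\,dz}.
$$
The denominator tends to $|w|_{p}^{p}>0$ by the Dominated Convergence Theorem, since $\eta(|\e_{n}z|)\to 1$ pointwise and $0\le \eta\le 1$.

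For the numerator, observe that for each fixed $z\in\R^{N}$ one has $|\e_{n}z+y_{n}|\leq \rho$ eventually, since $|y_{n}|\leq \rho$ is not quite enough on its own but the set $\{y_0\}$ lies strictly inside or on the closure of $B_\rho(0)$; more carefully, if $|y_{0}|<\rho$ the claim is immediate, while if $|y_{0}|=\rho$ a direct computation shows $\Upsilon(\e_{n}z+y_{n})-y_{n}\to 0$ pointwise still (since $\Upsilon$ is Lipschitz). In either case, $\Upsilon(\e_{n}z+y_{n})\to y_{0}$ pointwise, and so the integrand $[\Upsilon(\e_{n}z+y_{n})-y_{n}]\eta(|\e_{n}z|)^{p}w(z)^{p}$ converges pointwise to $0$ while being dominated by the integrable function $2\rho\, w(z)^{p}$. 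Applying the Dominated Convergence Theorem, the numerator tends to $0$, so the whole expression tends to $0$, contradicting $|\beta_{\e_{n}}(\Phi_{\e_{n}}(y_{n}))-y_{n}|\geq \delta_{0}$.

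The main technical point is the pointwise convergence $\Upsilon(\e_{n}z+y_{n})-y_{n}\to 0$: inside the ball $B_{\rho}(0)$ this is trivial because $\Upsilon$ is the identity there, but a bit of care is needed near the boundary $|y_{0}|=\rho$. The Lipschitz continuity of $\Upsilon$ on $\R^{N}$ (with constant $1$) handles this cleanly since $|\Upsilon(\e_{n}z+y_{n})-\Upsilon(y_{n})|\leq |\e_{n}z|\to 0$ and $\Upsilon(y_{n})=y_{n}$ because $y_{n}\in M\subset B_{\rho}(0)$. Everything else is a routine dominated convergence argument, and the contradiction-based setup automatically delivers the uniform convergence in $y\in M$.
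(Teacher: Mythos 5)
Your proof is correct and follows essentially the same approach as the paper: contradiction, change of variables, cancellation of $t_{\e_n}^p$, and dominated convergence. You have simply filled in the detail that $\Upsilon$ is $1$-Lipschitz and $\Upsilon(y_n)=y_n$ (since $y_n\in M\subset \B_{\rho}(0)$), which makes the pointwise convergence of the integrand in the numerator, and hence the application of the Dominated Convergence Theorem, explicit where the paper leaves it implicit.
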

\begin{proof}
Suppose by contradiction that there exist $\delta_{0}>0$, $(y_{n})\subset M$ and $\e_{n}\rightarrow 0$ such that 
\begin{equation}\label{4.4}
|\beta_{\e_{n}}(\Phi_{\e_{n}}(y_{n}))-y_{n}|\geq \delta_{0}.
\end{equation}
From the definitions of $\Phi_{\e_{n}}(y_{n})$, $\beta_{\e_{n}}$ and $\eta$, we can see that 
$$
\beta_{\e_{n}}(\Phi_{\e_{n}}(y_{n}))=y_{n}+\frac{\int_{\R^{N}}[\Upsilon(\e_{n}z+y_{n})-y_{n}] |\eta(|\e_{n}z|) w(z)|^{p} \, dz}{\int_{\R^{N}} |\eta(|\e_{n}z|) w(z)|^{p}\, dz}.
$$
Recalling that $(y_{n})\subset M\subset \B_{\rho}(0)$ and using the Dominated Convergence Theorem, we obtain that 
$$
|\beta_{\e_{n}}(\Phi_{\e_{n}}(y_{n}))-y_{n}|=o_{n}(1)
$$
which contradicts (\ref{4.4}).

\end{proof}

At this point, we prove the following compactness result which will be crucial in the sequel.
\begin{lem}\label{prop3.3}
Let $\e_{n}\rightarrow 0^{+}$ and $(u_{n})\subset \mathcal{N}_{\e_{n}}$ be such that $\J_{\e_{n}}(u_{n})\rightarrow d_{V_{0}}$. Then there exists $(\tilde{y}_{n})\subset \R^{N}$ such that $v_{n}(x)=u_{n}(x+\tilde{y}_{n})$ has a convergent subsequence in $W^{s,p}(\R^{N})$. Moreover, up to a subsequence, $y_{n}=\e_{n} \tilde{y}_{n}\rightarrow y\in M$.
\end{lem}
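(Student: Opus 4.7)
The plan is to combine a Lions-type concentration argument with Fatou-based comparisons against the autonomous problems $(P_{\mu})$, using the minimax characterization \eqref{Nguyen2} and the Brezis--Lieb-type splittings of Section~2 to force the weak limit to be a ground state at $V_{0}$.

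\textbf{Boundedness and non-vanishing.} Since $u_{n}\in\N_{\e_{n}}$, assumption $(f_{2})$ gives, exactly as in Lemma~\ref{lemB},
$$d_{V_{0}}+o_{n}(1)=\J_{\e_{n}}(u_{n})-\frac{1}{\theta}\langle\J'_{\e_{n}}(u_{n}),u_{n}\rangle\geq\Bigl(\frac{1}{p}-\frac{1}{\theta}\Bigr)\|u_{n}\|_{\e_{n}}^{p},$$
so $\|u_{n}\|_{\e_{n}}$ is uniformly bounded and $(u_{n})$ is bounded in $W^{s,p}(\R^{N})$. If $\sup_{y\in\R^{N}}\int_{\B_{R}(y)}|u_{n}|^{p}\,dx\to 0$ for every $R>0$, then Lemma~\ref{Lions} gives $u_{n}\to 0$ in $L^{t}(\R^{N})$ for every $t\in(p,\p)$; combined with $(f_{1})$, Lemma~\ref{lemK} and the Hardy--Littlewood--Sobolev inequality this forces $\int_{\R^{N}}K(u_{n})f(u_{n})u_{n}\,dx\to 0$, and the Nehari identity then yields $\|u_{n}\|_{\e_{n}}\to 0$, contradicting $d_{V_{0}}>0$. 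Hence there exist $\tilde{y}_{n}\in\R^{N}$ and $R,\beta>0$ with $\int_{\B_{R}(\tilde{y}_{n})}|u_{n}|^{p}\,dx\geq\beta$. Setting $v_{n}(x):=u_{n}(x+\tilde{y}_{n})$, translation invariance of the Gagliardo seminorm and $L^{p}$ norm shows that $(v_{n})$ is bounded in $W^{s,p}(\R^{N})$, so up to a subsequence $v_{n}\rightharpoonup v$ weakly, $v_{n}\to v$ in $L^{q}_{loc}$ and a.e., with $v\neq 0$.

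\textbf{Localizing $y_{n}=\e_{n}\tilde{y}_{n}$.} The key identity is
$$d_{V_{0}}+o_{n}(1)=\J_{\e_{n}}(u_{n})-\frac{1}{p}\langle\J'_{\e_{n}}(u_{n}),u_{n}\rangle=\int_{\R^{N}}K(v_{n})\Bigl[\frac{1}{p}f(v_{n})v_{n}-\frac{1}{2}F(v_{n})\Bigr]\,dx,$$
whose integrand is pointwise nonnegative by $(f_{2})$. Suppose by contradiction $|y_{n}|\to\infty$. When $V_{\infty}=\infty$, for each $x$ we have $V(\e_{n}x+y_{n})\to\infty$, and Fatou applied to the nonnegative sequence $V(\e_{n}x+y_{n})|v_{n}|^{p}$ yields $\int V(\e_{n}x+y_{n})|v_{n}|^{p}\,dx\to+\infty$ on $\{v\neq 0\}$, contradicting the bound on $\|u_{n}\|_{\e_{n}}$. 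When $V_{\infty}<\infty$, extract a further subsequence so that $V(y_{n})\to V_{*}\in[V_{\infty},+\infty]$; if $V_{*}=+\infty$ the previous Fatou argument applies, while if $V_{*}<\infty$ one passes to the limit in $\langle\J'_{\e_{n}}(u_{n}),\phi(\cdot-\tilde{y}_{n})\rangle=0$ for $\phi\in C^{\infty}_{c}(\R^{N})$, using Lemma~\ref{lemVince}, Lemma~\ref{BLlem} and the uniform convergence of $V(\e_{n}x+y_{n})\to V_{*}$ on $\supp\phi$, to obtain that $v$ is a nontrivial weak solution of $(P_{V_{*}})$, hence $v\in\M_{V_{*}}$. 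A Brezis--Lieb splitting of the displayed integrand (in the spirit of Lemma~\ref{BLlem}) with $w_{n}=v_{n}-v$ then gives
$$d_{V_{0}}\geq\int_{\R^{N}}K(v)\Bigl[\frac{1}{p}f(v)v-\frac{1}{2}F(v)\Bigr]\,dx=\I_{V_{*}}(v)\geq d_{V_{*}}\geq d_{V_{\infty}}.$$
But \eqref{Nguyen2} shows that $\mu\mapsto d_{\mu}$ is strictly increasing (for $\mu_{1}<\mu_{2}$ and any $u\neq 0$, $\max_{t\geq 0}\I_{\mu_{1}}(tu)<\max_{t\geq 0}\I_{\mu_{2}}(tu)$), and $(V)$ gives $V_{0}<V_{\infty}$, so $d_{V_{0}}<d_{V_{\infty}}$, contradiction. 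Therefore $(y_{n})$ is bounded and, up to subsequence, $y_{n}\to y$.

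\textbf{Conclusion and strong convergence.} By continuity of $V$, $V(\e_{n}x+y_{n})\to V(y)$ uniformly on compact sets; repeating the above argument with $V_{*}$ replaced by $V(y)$ yields $v\in\M_{V(y)}$ and
$$d_{V_{0}}\geq\I_{V(y)}(v)\geq d_{V(y)}\geq d_{V_{0}},$$
the last bound because $V(y)\geq V_{0}$. Strict monotonicity then forces $V(y)=V_{0}$, i.e., $y\in M$. Equality throughout the chain means the Fatou bounds are saturated; Lemma~\ref{lemPSY} and Lemma~\ref{BLlem} applied to $w_{n}=v_{n}-v$ then give $[w_{n}]_{s,p}^{p}+\int V(\e_{n}\cdot+y_{n})|w_{n}|^{p}\,dx\to 0$, hence $v_{n}\to v$ strongly in $W^{s,p}(\R^{N})$. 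The main obstacle is the localization step: one must simultaneously pass to the weak limit in the nonlinear nonlocal form of $(-\Delta)^{s}_{p}$ and in the Choquard convolution with only weak convergence of $v_{n}$ and only pointwise control of the shifted potential, for which Lemmas~\ref{lemVince}, \ref{BLlem} and \ref{lemK} are indispensable, and the coercive case $V_{\infty}=\infty$ must be handled separately since $d_{V_{\infty}}$ is not directly available there.
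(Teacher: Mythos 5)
Your opening block (boundedness, Lions non-vanishing, translation to $v_n\rightharpoonup v\neq 0$) coincides with the paper's. The divergence, and the genuine gap, lies in how you localize $y_n=\e_n\tilde{y}_n$.

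When $V_\infty<\infty$ you extract $V(y_n)\to V_*$ and then assert ``uniform convergence of $V(\e_n x+y_n)\to V_*$ on $\supp\phi$'' in order to pass to the limit in the equation and conclude $v\in\M_{V_*}$. This step fails under the hypotheses of the paper: $V$ is assumed only \emph{continuous}, not uniformly continuous, and $(V)$ does not even force $V$ to be bounded (it controls $\liminf_{|x|\to\infty}V$, not $\limsup$). Knowing $V(y_n)\to V_*$ and $\e_n x\to 0$ does not give $V(\e_n x+y_n)\to V_*$, because $y_n\to\infty$ and the modulus of continuity of $V$ can degenerate at infinity (e.g.\ $V(z)=2+\sin(|z|^3)$ has $|\nabla V|$ unbounded, so on a ball of radius $\e_n R$ around $y_n$ the oscillation of $V$ need not vanish unless $\e_n|y_n|^2\to 0$). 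Consequently $v$ cannot be identified as a solution of the autonomous problem $(P_{V_*})$, and the chain $d_{V_0}\geq\I_{V_*}(v)\geq d_{V_*}\geq d_{V_\infty}$ does not get off the ground. The paper sidesteps this entirely: it projects $v_n$ onto $\M_{V_0}$ via $\tilde{v}_n=t_n v_n$, using only the one-sided and translation-free inequality $V(\e_n\cdot)\geq V_0$, shows $\I_{V_0}(\tilde v_n)\to d_{V_0}$, upgrades to a bounded $(PS)_{d_{V_0}}$ sequence, and obtains \emph{strong} convergence $\tilde v_n\to\tilde v$ from the Brezis--Lieb splittings (Lemmas~\ref{lemVince}, \ref{BLlem}) together with the coercivity estimate $\|\cdot\|^p\leq C(\I_{V_0}-\tfrac1\theta\langle\I_{V_0}',\cdot\rangle)$; only \emph{then} does it use Fatou on the potential term (needing merely $\liminf V(\e_n x+y_n)\geq V_\infty$ pointwise, which \emph{is} guaranteed by $(V)$ when $|y_n|\to\infty$) to get $d_{V_0}=\I_{V_0}(\tilde v)<\I_{V_\infty}(\tilde v)\leq\liminf\J_{\e_n}(t_nu_n)\leq d_{V_0}$, a contradiction.

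Your final strong-convergence step is also incomplete. ``Equality throughout the chain'' says the Fatou inequality for the scalar quantity $\int K(v_n)[\tfrac1p f(v_n)v_n-\tfrac12F(v_n)]$ is saturated, but this does not by itself give convergence of the $W^{s,p}$-norms; one cannot read off $[v_n-v]_{s,p}\to 0$ and $|v_n-v|_p\to 0$ from saturated Fatou in the nonlinear functional. In the paper this comes from the $(PS)$ structure of $(\tilde v_n)$ for the fixed autonomous functional $\I_{V_0}$ together with $(f_2)$. Also, Lemma~\ref{lemPSY} is stated for the $\|\cdot\|_{\e}$-norm with potential $V(\e\cdot)$ and cannot be applied verbatim to the translated potential $V(\e_n\cdot+y_n)$. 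So the mechanism you invoke for strong convergence is not available in the form you use it. Your $V_\infty=\infty$ Fatou argument, by contrast, is correct.
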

\begin{proof}
Since $\langle \J'_{\e_{n}}(u_{n}), u_{n}\rangle=0$ and $\J_{\e_{n}}(u_{n})\rightarrow d_{V_{0}}$, we can argue as in Lemma \ref{lemB} to see that $(u_{n})$ is bounded in $\mathcal{W}_{\e_{n}}$. 
Now, we show that there exist a sequence $(\tilde{y}_{n})\subset \R^{N}$, and constants $R>0$ and $\beta>0$ such that
\begin{equation}\label{sacchi}
\liminf_{n\rightarrow \infty}\int_{\B_{R}(\tilde{y}_{n})} |u_{n}|^{p} \, dx\geq \beta>0.
\end{equation}
Suppose that condition \eqref{sacchi} does not hold. Then, for all $R>0$, we have
$$
\lim_{n\rightarrow \infty}\sup_{y\in \R^{N}}\int_{\B_{R}(y)} |u_{n}|^{p} \, dx=0.
$$
Since we know that $(u_{n})$ is bounded in $W^{s,p}(\R^{N})$, we can use Lemma \ref{Lions} to deduce that $u_{n}\rightarrow 0$ in $L^{q}(\R^{N})$ for any $q\in (p, p^{*}_{s})$. 
Taking into account $\langle \J'_{\e_{n}}(u_{n}), u_{n}\rangle=0$ and applying Theorem \ref{HLS} and $(f_1)$, we can  infer that $\|u_{n}\|_{\e_{n}}\rightarrow 0$ as $n\rightarrow \infty$. Then, $\J_{\e_{n}}(u_{n})\rightarrow 0$ as $n\rightarrow \infty$, and this is a contradiction because of $\J_{\e_{n}}(u_{n})\rightarrow d_{V_{0}}>0$. 

Now, we set $v_{n}(x)=u_{n}(x+\tilde{y}_{n})$. Then, $(v_{n})$ is bounded in $W^{s,p}(\R^{N})$, and we may assume that 
$v_{n}\rightharpoonup v\not\equiv 0$ in $W^{s,p}(\R^{N})$ as $n\rightarrow \infty$.
Fix $t_{n}>0$ such that $\tilde{v}_{n}=t_{n} v_{n}\in \mathcal{M}_{V_{0}}$. Since $u_{n}\in \mathcal{N}_{\e_{n}}$, we can see that 
$$
d_{V_{0}}\leq \I_{V_{0}}(\tilde{v}_{n})= \I_{V_{0}}(t_{n}u_{n})\leq \J_{\e_{n}}(t_{n}u_{n})\leq \J_{\e_{n}}(u_{n})= d_{V_{0}}+o_{n}(1)
$$
which gives $\I_{V_{0}}(\tilde{v}_{n})\rightarrow d_{V_{0}}$. From Ekeland's variational principle, we may assume that $(\tilde{v}_{n})$ is a bounded $(PS)_{d_{V_{0}}}$.
In particular, we get $\tilde{v}_{n}\rightharpoonup \tilde{v}$ in $W^{s,p}(\R^{N})$ for some $\tilde{v}\not\equiv 0$, and $\I'_{V_{0}}(\tilde{v})=0$. \\
Using Lemma \ref{lemVince} and \ref{BLlem}, we can deduce that
$$
\I_{V_{0}}(\tilde{v}_{n}-\tilde{v})\rightarrow d_{V_{0}}-\I_{V_{0}}(\tilde{v}) \mbox{ and } \I'_{V_{0}}(\tilde{v}_{n}-\tilde{v})\rightarrow 0.
$$
Since Fatou's Lemma gives
\begin{align*}
d_{V_{0}}=\lim_{n\rightarrow \infty} \I_{V_{0}}(\tilde{v}_{n})&=\lim_{n\rightarrow \infty} \left(\frac{1}{p}\langle \Sigma'(\tilde{v}_{n}), \tilde{v}_{n}\rangle-\Sigma(\tilde{v}_{n}) \right) \\
&\geq \left( \frac{1}{p}\langle \Sigma'(\tilde{v}), \tilde{v}\rangle-\Sigma(\tilde{v}) \right)=\I_{V_{0}}(\tilde{v})\geq d_{V_{0}},
\end{align*}
we can infer that
$$
\I_{V_{0}}(\tilde{v}_{n}-\tilde{v})\rightarrow 0 \mbox{ and } \I'_{V_{0}}(\tilde{v}_{n}-\tilde{v})\rightarrow 0.
$$
Therefore, 
$$
\|\tilde{v}_{n}-\tilde{v}\|^{p}_{V_{0}}\leq C \left[\I_{V_{0}}(\tilde{v}_{n}-\tilde{v})-\frac{1}{\theta}\langle \I'_{V_{0}}(\tilde{v}_{n}-\tilde{v}), \tilde{v}_{n}-\tilde{v} \rangle \right] \rightarrow 0,
$$ 
that is
\begin{equation}\label{elena}
\tilde{v}_{n}\rightarrow \tilde{v} \mbox{ in } W^{s,p}(\R^{N}).
\end{equation} 
This and the fact that $t_{n}\rightarrow t_{0}$, for some $t_{0}>0$, yield $v_{n}\rightarrow v$ in $W^{s,p}(\R^{N})$ as $n\rightarrow \infty$.

Now, we set $y_{n}=\e_{n}\tilde{y}_{n}$, and we aim to prove that $(y_{n})$ admits a subsequence, still denoted by $y_{n}$, such that $y_{n}\rightarrow y$, for some $y\in M$. Firstly, we show that $(y_{n})$ is bounded. We argue by contradiction, and we assume that, up to a subsequence, $|y_{n}|\rightarrow \infty$ as $n\rightarrow \infty$. 
Taking into account \eqref{elena} and $V_{0}<V_{\infty}$, we get
\begin{align*}
d_{V_{0}}&=\I_{V_{0}}(\tilde{v})<\I_{V_{\infty}}(\tilde{v}) \\
&\leq \liminf_{n\rightarrow \infty} \left[\frac{1}{p}[\tilde{v}_{n}]^{p}_{s,p}+\frac{1}{2}\int_{\R^{N}} V(\e_{n}x+y_{n})|\tilde{v}_{n}|^{p}-\Sigma(\tilde{v}_{n})  \right] \\
&=\liminf_{n\rightarrow \infty} \left[\frac{t_{n}^{p}}{p}[u_{n}]^{p}_{s,p}+\frac{t_{n}^{p}}{p}\int_{\R^{N}} V(\e_{n}z)|u_{n}|^{p}-\Sigma(t_{n}u_{n})  \right] \\
&=\liminf_{n\rightarrow \infty} \J_{\e}(t_{n} u_{n}) \\
&\leq \liminf_{n\rightarrow \infty} \J_{\e}(u_{n})=d_{V_{0}}
\end{align*}
which gives an absurd.
Therefore, $(y_{n})$ is bounded, and we may assume that $y_{n}\rightarrow y\in \R^{N}$. Clearly, $y\in M$ otherwise we can argue as above to get a contradiction.
\end{proof}

\noindent
Now, we define a map $h:\R_{+}\rightarrow \R_{+}$ given by $h(\e)=\max_{y\in M}|\J_{\e}(\Phi_{\e}(y))-d_{V_{0}}|$. By Lemma \ref{lemma3.4}, we know that $h(\e)\rightarrow 0$.
Let us consider
$$
\widetilde{\mathcal{N}}_{\e}=\{u\in \mathcal{N}_{\e}: \J_{\e}(u)\leq d_{V_{0}}+h(\e)\},
$$
and we note that $\widetilde{\mathcal{N}}_{\e}\neq \emptyset$  because  $\Phi_{\e}(y)\in \widetilde{\mathcal{N}}_{\e}$ for all $y\in M$. Moreover, we can see that
\begin{lem}\label{lemma3.7}
$$
\lim_{\e \rightarrow 0} \sup_{u\in \widetilde{\mathcal{N}}_{\e}} dist(\beta_{\e}(u), M_{\delta})=0.
$$
\end{lem}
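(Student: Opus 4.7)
The plan is to argue by contradiction. I would assume there exist $\delta_{0}>0$, a sequence $\e_{n}\rightarrow 0^{+}$ and $u_{n}\in \widetilde{\mathcal{N}}_{\e_{n}}$ such that $\mathrm{dist}(\beta_{\e_{n}}(u_{n}), M_{\delta})\geq \delta_{0}$ for all $n$. The goal will be to reduce this to a situation to which Lemma \ref{prop3.3} applies.

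First I would verify that $\J_{\e_{n}}(u_{n})\rightarrow d_{V_{0}}$. On one hand, since $V(\e x)\geq V_{0}$ implies $\J_{\e}(u)\geq \I_{V_{0}}(u)$, a standard comparison using the characterization \eqref{Nguyen2} and the fact that every $u\in \mathcal{N}_{\e}$ can be rescaled onto $\mathcal{M}_{V_{0}}$ without increasing the energy gives $c_{\e}\geq d_{V_{0}}$. On the other hand, Lemma \ref{lemma3.4} together with $\Phi_{\e}(y)\in \mathcal{N}_{\e}$ yields $c_{\e}\leq d_{V_{0}}+h(\e)$. Combining these with the definition of $\widetilde{\mathcal{N}}_{\e}$ gives
\[
d_{V_{0}}\leq c_{\e_{n}}\leq \J_{\e_{n}}(u_{n})\leq d_{V_{0}}+h(\e_{n}),
\]
and since $h(\e_{n})\rightarrow 0$ we conclude $\J_{\e_{n}}(u_{n})\rightarrow d_{V_{0}}$.

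At this point I would invoke Lemma \ref{prop3.3} to produce a sequence $(\tilde{y}_{n})\subset \R^{N}$ such that $v_{n}(x):= u_{n}(x+\tilde{y}_{n})$ converges strongly in $W^{s,p}(\R^{N})$ (say to $v\not\equiv 0$), and such that $y_{n}:=\e_{n}\tilde{y}_{n}\rightarrow y\in M$ along a subsequence. The change of variable $x=z+\tilde{y}_{n}$ in the definition of $\beta_{\e}$ gives
\[
\beta_{\e_{n}}(u_{n}) - y_{n}= \frac{\int_{\R^{N}}\bigl[\Upsilon(\e_{n}z+y_{n})-y_{n}\bigr]|v_{n}(z)|^{p}\,dz}{\int_{\R^{N}}|v_{n}(z)|^{p}\,dz}.
\]
Because $y\in M\subset \B_{\rho}(0)$, we have $\Upsilon(y)=y$, so the bracketed factor converges pointwise to $0$ and is bounded by $2\rho$. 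Since $|v_{n}|^{p}\rightarrow |v|^{p}$ in $L^{1}(\R^{N})$ with $v\not\equiv 0$, a dominated/Vitali convergence argument shows that the numerator tends to $0$ while the denominator stays bounded away from $0$. Hence $\beta_{\e_{n}}(u_{n})-y_{n}\rightarrow 0$, so $\beta_{\e_{n}}(u_{n})\rightarrow y\in M\subset M_{\delta}$, contradicting $\mathrm{dist}(\beta_{\e_{n}}(u_{n}), M_{\delta})\geq \delta_{0}$.

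The only delicate preliminary is the sandwich $d_{V_{0}}\leq c_{\e_{n}}\leq d_{V_{0}}+h(\e_{n})$, which is what allows Lemma \ref{prop3.3} to be activated; everything afterwards is a direct consequence of the concentration provided by that lemma, combined with the fact that the truncation $\Upsilon$ acts as the identity on $M$.
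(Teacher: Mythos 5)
Your proof is correct and follows essentially the same route as the paper: establish $\J_{\e_n}(u_n)\to d_{V_0}$ via the sandwich $d_{V_0}\le c_{\e_n}\le \J_{\e_n}(u_n)\le d_{V_0}+h(\e_n)$, apply Lemma \ref{prop3.3} to extract the translations $\tilde{y}_n$ with $\e_n\tilde{y}_n\to y\in M$, and conclude from the change of variable in $\beta_{\e}$ that $\beta_{\e_n}(u_n)-y_n\to 0$. The only difference is cosmetic (you frame it as a contradiction, the paper extracts a near-maximizing sequence directly); your filling in of the inequality $d_{V_0}\le c_{\e}$ via $V(\e x)\ge V_0$ and the minimax characterization \eqref{Nguyen2} is a correct elaboration of a step the paper leaves implicit.
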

\begin{proof}
Let $\e_{n}\rightarrow 0$ as $n\rightarrow \infty$. For any $n\in \mathbb{N}$, there exists $u_{n}\in \widetilde{\mathcal{N}}_{\e_{n}}$ such that
$$
\sup_{u\in \tilde{\mathcal{N}}_{\e_{n}}} \inf_{y\in M_{\delta}}|\beta_{\e_{n}}(u)-y|=\inf_{y\in M_{\delta}}|\beta_{\e_{n}}(u_{n})-y|+o_{n}(1).
$$
Therefore, it is suffices to prove that there exists $(y_{n})\subset M_{\delta}$ such that 
\begin{equation}\label{3.13}
\lim_{n\rightarrow \infty} |\beta_{\e_{n}}(u_{n})-y_{n}|=0.
\end{equation}
We note that $(u_{n})\subset  \widetilde{\mathcal{N}}_{\e_{n}}\subset  \mathcal{N}_{\e_{n}}$, from which we deduce that
$$
d_{V_{0}}\leq c_{\e_{n}}\leq \J_{\e_{n}}(u_{n})\leq d_{V_{0}}+h(\e_{n}).
$$
This yields $\J_{\e_{n}}(u_{n})\rightarrow d_{V_{0}}$. Using Lemma \ref{prop3.3}, there exists $(\tilde{y}_{n})\subset \R^{N}$ such that $y_{n}=\e_{n}\tilde{y}_{n}\in M_{\delta}$ for $n$ sufficiently large. Setting $v_{n}=u_{n}(\cdot+\tilde{y}_{n})$ and using a change of variable, we can see that
$$
\beta_{\e_{n}}(u_{n})=y_{n}+\frac{\int_{\R^{N}}[\Upsilon(\e_{n}x+y_{n})-y_{n}] |v_{n}|^{p} \, dx}{\int_{\R^{N}} |v_{n}|^{p}\, dx}.
$$
Since $\e_{n} x+y_{n}\rightarrow y\in M$, we deduce that $\beta_{\e_{n}}(u_{n})=y_{n}+o_{n}(1)$, that is (\ref{3.13}) holds.

\end{proof}

\subsection{Multiple solutions to \eqref{P}}
\noindent
Now we show that \eqref{R} admits at least $cat_{M_{\delta}}(M)$ positive solutions.
In order to achieve our aim, we recall the following result for critical points involving Ljusternik-Schnirelmann category; see \cite{MW}.
\begin{thm}\label{LSt}
Let $U$ be a $C^{1,1}$ complete Riemannian manifold (modeled on a Hilbert space). Assume that $h\in C^{1}(U, \R)$ bounded from below and satisfies $-\infty<\inf_{U} h<d<k<\infty$. Moreover, suppose that $h$ satisfies Palais-Smale condition on the sublevel $\{u\in U: h(u)\leq k\}$ and that $d$ is not a critical level for $h$. Then
$$
card\{u\in h^{d}: \nabla h(u)=0\}\geq cat_{h^{d}}(h^{d}).
$$
\end{thm}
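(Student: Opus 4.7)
The plan is to argue by contradiction via the classical Ljusternik-Schnirelmann scheme, combining the deformation lemma (available on a $C^{1,1}$ complete Riemannian manifold once $(PS)$ is assumed) with the homotopy invariance and subadditivity of the category. Suppose that $K:=\{u\in h^{d}:\nabla h(u)=0\}$ has cardinality $n<\mathrm{cat}_{h^{d}}(h^{d})$. The $(PS)$ condition on $\{h\leq k\}\supset h^{d}$ makes $K$ compact, hence finite under the working assumption, so I would enumerate $K=\{u_{1},\dots,u_{n}\}$ and choose pairwise disjoint open neighborhoods $V_{i}$ of the $u_{i}$, each contractible in $h^{d}$ (for instance small geodesic balls) and with $\overline{V_{i}}\subset h^{d}$; set $V=\bigcup_{i=1}^{n}V_{i}$.

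Next I would construct a continuous deformation $\eta:[0,1]\times h^{d}\to h^{d}$ with $\eta(0,\cdot)=\mathrm{id}$ and $t\mapsto h(\eta(t,u))$ nonincreasing, tailored so that $\eta(1,h^{d}\setminus V)\subset h^{\bar d}$ for some $\bar d<\min_{K}h$. This is the standard deformation lemma: starting from a locally Lipschitz pseudogradient vector field for $h$ on $\{h\leq k\}\setminus K$, one truncates near $K$ and near the bottom of $h$; the $(PS)$ condition provides a uniform positive lower bound on $|\nabla h|$ off $V$ inside $\{h\leq k\}$, so the induced flow descends to level $\bar d$ in finite time, while the regularity of $d$ ensures the trajectories remain inside $h^{d}$. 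Since $h^{\bar d}$ contains no critical points and $h$ is bounded below with $(PS)$, a further descent step contracts $h^{\bar d}$ to a point and lets it be absorbed into one of the $V_{i}$, so $h^{d}$ is homotopic within itself to $\overline V$.

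By homotopy invariance and subadditivity of the Ljusternik-Schnirelmann category one then gets
$$
\mathrm{cat}_{h^{d}}(h^{d})=\mathrm{cat}_{h^{d}}(\overline V)\leq \sum_{i=1}^{n}\mathrm{cat}_{h^{d}}(\overline{V_{i}})=n,
$$
contradicting the initial assumption and proving the stated inequality. The main obstacle will be carrying out the deformation while staying inside the prescribed sublevel $h^{d}$ and, relatedly, executing the absorption step that yields the sharp bound $n$ rather than a spurious $n+1$: this is exactly where the hypotheses that $d$ be regular (so the descent field can be arranged transverse to $\partial h^{d}$ and pointing inward) and that $(PS)$ hold on the slightly larger sublevel $\{h\leq k\}$ (preventing orbits from stagnating or escaping before reaching the target level $\bar d$) enter decisively; I would refer to \cite{MW} for the full bookkeeping.
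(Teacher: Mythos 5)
The paper does not prove this statement at all: it is recalled as a classical Ljusternik--Schnirelmann result and simply cited to \cite{MW} (``we recall the following result for critical points involving Ljusternik-Schnirelmann category; see \cite{MW}''). So you are being compared against a citation, not a proof, and the relevant question is whether your blind argument is actually correct. It is not.

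The fatal step is the choice $\bar d<\min_{K}h$ together with the claim $\eta(1,h^{d}\setminus V)\subset h^{\bar d}$. Because $h$ is bounded below and satisfies $(PS)$ on $\{h\leq k\}$, Ekeland's variational principle produces a minimizing sequence with $\nabla h\to 0$, and $(PS)$ then forces $\inf_{U}h$ to be attained at some $u_{\min}$ with $\nabla h(u_{\min})=0$ and $h(u_{\min})=\inf_{U}h<d$. Thus $u_{\min}\in K$ and $\min_{K}h=\inf_{U}h$, so any $\bar d<\min_{K}h$ gives $h^{\bar d}=\emptyset$; no deformation of the (nonempty) set $h^{d}\setminus V$ into $h^{\bar d}$ can exist. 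Your subsequent sentence ``since $h^{\bar d}$ contains no critical points and $h$ is bounded below with $(PS)$, a further descent step contracts $h^{\bar d}$ to a point'' is internally inconsistent: a nonempty sublevel $h^{\bar d}$ with no critical points is exactly what $(PS)$ plus boundedness below forbids, by the same Ekeland argument. So the ``absorption'' step cannot be carried out, and the concluding inequality $\mathrm{cat}_{h^{d}}(h^{d})\leq n$ is not established.

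A correct route, and the one \cite{MW} takes, is the minimax scheme rather than a single global deformation onto $\overline V$. Set $m=\mathrm{cat}_{h^{d}}(h^{d})$ and define, for $1\leq j\leq m$,
\[
c_{j}=\inf\{\sup_{A}h:\ A\subset h^{d}\ \text{closed},\ \mathrm{cat}_{h^{d}}(A)\geq j\}.
\]
Each class is nonempty (take $A=h^{d}$), so $\inf_{U}h\leq c_{1}\leq\cdots\leq c_{m}\leq d$, and the regularity of $d$ together with the deformation lemma on $\{h\leq k\}$ forces $c_{m}<d$. One then shows each $c_{j}$ is a critical value and, when $c_{j}=\cdots=c_{j+p}$, that the critical set at that level has category at least $p+1$ (hence is infinite if $p\geq1$); this is where the quantitative deformation lemma and the subadditivity/monotonicity of category are actually used. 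Summing the multiplicities yields at least $m$ critical points in $h^{d}$. Alternatively, your ``deformation plus covering'' intuition can be repaired by arguing inductively across critical levels starting from a contractible neighborhood of the minimum, rather than by one descent below $\min_{K}h$; but as written your argument breaks at the bottom of the functional.
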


Since $\mathcal{N}_{\e}$ is not a $C^{1}$ submanifold of $\h$, we cannot directly apply Theorem \ref{LSt}. However, in view of Lemma \ref{SW2}, we know that the mapping $m_{\e}$ is a homeomorphism between $\mathcal{N}_{\e}$ and $\mathbb{S}_{\e}$, and $\mathbb{S}_{\e}$ is a $C^{1}$ submanifold of $\h$. Therefore, we can apply Theorem \ref{LSt} to
$\Psi_{\e}(u)=\J_{\e}(\hat{m}_{\e}(u))|_{\mathbb{S}_{\e}}=\J_{\e}(m_{\e}(u))$, where $\Psi_{\e}$ is given in Lemma \ref{SW3}.
\begin{thm}\label{teorema}
Assume that $(V)$ and $(f_1)$-$(f_3)$ hold. Then, for any $\delta>0$ there exists $\bar{\e}_\delta>0$ such that, for any $\e \in (0, \bar{\e}_\delta)$, problem \eqref{R} has at least $cat_{M_{\delta}}(M)$ positive solutions.
\end{thm}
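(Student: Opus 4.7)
The plan is to carry out a Benci--Cerami type argument using the maps $\Phi_\e$ and $\beta_\e$ already constructed, then transport everything to the $C^1$ manifold $\mathbb{S}_\e$ so that the abstract Ljusternik--Schnirelmann theorem (Theorem \ref{LSt}) becomes applicable.

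First, I would fix $\delta>0$ and shrink $\delta$ if necessary so that $M_\delta\subset \B_\rho(0)$. Since $V_0<V_\infty$ implies $d_{V_0}<d_{V_\infty}$ (this monotonicity with respect to the potential follows from the minimax characterization \eqref{Nguyen2}), and since by Lemma \ref{lemma3.4} we have $h(\e)\to 0$, there exists $\bar\e_\delta>0$ such that for every $\e\in(0,\bar\e_\delta)$
\[
d_{V_0}+h(\e)<d_{V_\infty},
\]
and moreover, by Lemma \ref{lemma3.5} and Lemma \ref{lemma3.7}, both maps $y\mapsto \beta_\e(\Phi_\e(y))$ and $u\mapsto \mathrm{dist}(\beta_\e(u),M_\delta)$ behave as announced uniformly in $y\in M$ and $u\in\widetilde{\mathcal{N}}_\e$ respectively. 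In particular, for such $\e$ the composition $\beta_\e\circ\Phi_\e\colon M\to M_\delta$ is well-defined and is homotopic to the inclusion $\iota\colon M\hookrightarrow M_\delta$; an explicit homotopy is $H(t,y)=\beta_\e(\Phi_\e(y))+t\bigl(y-\beta_\e(\Phi_\e(y))\bigr)$, which takes values in $M_\delta$ for $\e$ small because Lemma \ref{lemma3.5} forces $|\beta_\e(\Phi_\e(y))-y|$ to be arbitrarily small.

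Second, I would transfer the problem from $\mathcal{N}_\e$ to $\mathbb{S}_\e$. By Lemma \ref{SW2}$(iv)$, $m_\e\colon\mathbb{S}_\e\to\mathcal{N}_\e$ is a homeomorphism, and $\mathbb{S}_\e$ is a $C^{1,1}$ complete Riemannian submanifold of $\h$. Set
\[
\widetilde{\mathbb{S}}_\e=m_\e^{-1}(\widetilde{\mathcal{N}}_\e)=\{w\in\mathbb{S}_\e:\Psi_\e(w)\le d_{V_0}+h(\e)\},
\]
and consider $\tilde\Phi_\e=m_\e^{-1}\circ\Phi_\e\colon M\to\widetilde{\mathbb{S}}_\e$ and $\tilde\beta_\e=\beta_\e\circ m_\e\colon\widetilde{\mathbb{S}}_\e\to M_\delta$. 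The previous homotopy shows that $\tilde\beta_\e\circ\tilde\Phi_\e\simeq\iota$ in $M_\delta$, hence by the standard category inequality
\[
\mathrm{cat}_{\widetilde{\mathbb{S}}_\e}(\widetilde{\mathbb{S}}_\e)\ge\mathrm{cat}_{M_\delta}(M).
\]

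Third, I would verify the Palais--Smale condition for $\Psi_\e$ on the sublevel $\widetilde{\mathbb{S}}_\e$. Let $(w_n)\subset\widetilde{\mathbb{S}}_\e$ be a $(PS)_c$ sequence for $\Psi_\e$ at some level $c\le d_{V_0}+h(\e)$. By Lemma \ref{SW3}$(ii)$, $u_n=m_\e(w_n)\in\mathcal{N}_\e$ is a $(PS)_c$ sequence for $\J_\e$; Lemma \ref{lemB} gives boundedness in $\h$, and since $c<d_{V_\infty}$ (or $V_\infty=\infty$) Proposition \ref{prop2.1} yields a convergent subsequence $u_n\to u$ in $\h$. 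As $\|u\|_\e\ge\kappa>0$ by \eqref{consnehari}, the inverse $w=u/\|u\|_\e$ provides the desired limit in $\widetilde{\mathbb{S}}_\e$. Applying Theorem \ref{LSt} to $\Psi_\e$ on $\widetilde{\mathbb{S}}_\e$ produces at least $\mathrm{cat}_{M_\delta}(M)$ critical points of $\Psi_\e$, and by Lemma \ref{SW3}$(iii)$ the same number of critical points of $\J_\e$ in $\widetilde{\mathcal{N}}_\e$, i.e.\ nontrivial weak solutions of \eqref{R}.

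Finally, positivity follows by a routine argument: testing $\J'_\e(u)$ with $u^-$, using that $f(t)=0$ for $t\le 0$ and the standard inequality $|a-b|^{p-2}(a-b)(a^--b^-)\ge|a^--b^-|^p$, one gets $\|u^-\|_\e=0$, so $u\ge 0$; the boundedness $u\in L^\infty(\R^N)$ via Moser iteration, the H\"older regularity from \cite{IMS}, and the strong maximum principle for $(-\Delta)^s_p$ in \cite{DPQ} then give $u>0$ in $\R^N$, exactly as in the last step of Lemma \ref{lem4.3}. The main obstacle is the Palais--Smale verification: it relies crucially on the strict inequality $d_{V_0}+h(\e)<d_{V_\infty}$, which in turn needs both the uniform convergence $h(\e)\to 0$ from Lemma \ref{lemma3.4} and the comparison $d_{V_0}<d_{V_\infty}$ coming from assumption $(V)$ via the minimax characterization \eqref{Nguyen2}; the homotopy construction is delicate but straightforward once Lemmas \ref{lemma3.4}--\ref{lemma3.7} are in hand.
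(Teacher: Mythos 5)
Your proposal follows the same Benci--Cerami strategy as the paper: define $\widetilde{\mathbb{S}}_\e=m_\e^{-1}(\widetilde{\mathcal{N}}_\e)$, use Lemmas \ref{lemma3.4}--\ref{lemma3.7} to show that $\beta_\e\circ\Phi_\e$ is homotopic in $M_\delta$ to the inclusion (your homotopy $\beta_\e(\Phi_\e(y))+t(y-\beta_\e(\Phi_\e(y)))$ is the same as the paper's $y+(1-t)\theta(\e,y)$), deduce $\mathrm{cat}_{\widetilde{\mathbb{S}}_\e}(\widetilde{\mathbb{S}}_\e)\ge\mathrm{cat}_{M_\delta}(M)$, verify $(PS)$ below $d_{V_\infty}$ via Lemma \ref{SW3}(ii), Lemma \ref{lemB}, and Proposition \ref{prop2.1}, and apply Theorem \ref{LSt} to $\Psi_\e$. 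One small omission: Theorem \ref{LSt} also requires that the sublevel $d=d_{V_0}+h(\e)$ not be a critical value of $\Psi_\e$; the paper handles this by replacing $h(\e)$, if necessary, with a nearby positive function still tending to $0$ for which $d_{V_0}+h(\e)$ is a regular value, and you should add this remark before invoking the abstract theorem.
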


\begin{proof}
For any $\e>0$, we define $\alpha_{\e} : M \rightarrow \mathbb{S}_{\e}$ by $\alpha_{\e}(y)= m_{\e}^{-1}(\Phi_{\e}(y))$. Using Lemma \ref{lemma3.4} and the definition of $\Psi_{\e}$, we can see that
\begin{equation*}
\lim_{\e \rightarrow 0} \Psi_{\e}(\alpha_{\e}(y)) = \lim_{\e \rightarrow 0} \J_{\e}(\Phi_{\e}(y))= d_{V_{0}} \quad \mbox{ uniformly in } y\in M.
\end{equation*}
Thus, there exists $\tilde{\e}>0$ such that $\tilde{\mathbb{S}}_{\e}:=\{ w\in \mathbb{S}_{\e} : \Psi_{\e}(w) \leq d_{V_{0}} + h(\e)\} \neq \emptyset$ for all $\e \in (0, \tilde{\e})$, where $h(\e)= |\Psi_{\e}(\alpha_{\e}(y)) - d_{V_{0}}|\rightarrow 0$ as $\e\rightarrow 0$.

Putting together Lemma \ref{SW2}, Lemma \ref{SW3}, Lemma \ref{lemma3.4}, Lemma \ref{lemma3.5} and Lemma \ref{lemma3.7}, we can find $\bar{\e}= \bar{\e}_{\delta}>0$ such that the following diagram
\begin{equation*}
M\stackrel{\Phi_{\e}}{\rightarrow} \widetilde{\mathcal{N}}_{\e} \stackrel{m_{\e}^{-1}}{\rightarrow} \tilde{\mathbb{S}}_{\e} \stackrel{m_{\e}}{\rightarrow} \widetilde{\mathcal{N}}_{\e} \stackrel{\beta_{\e}}{\rightarrow} M_{\delta}
\end{equation*}
is well defined for any $\e \in (0, \bar{\e})$.

In view of Lemma \ref{lemma3.5}, there exists a function $\theta(\e, y)$ with $|\theta(\e, y)|<\frac{\delta}{2}$ uniformly in $y\in M$ for all $\e \in (0, \bar{\e})$ such that $\beta_{\e}(\Phi_{\e}(y))= y+ \theta(\e, y)$ for all $y\in M$. Then, we can see that $H(t, y)= y+ (1-t)\theta(\e, y)$ with $(t, y)\in [0,1]\times M$ is a homotopy between $\beta_{\e} \circ \Phi_{\e}=(\beta_{\e}\circ m_{\e}) \circ \alpha_{\e}$ and the inclusion map $id: M \rightarrow M_{\delta}$. This fact implies that $cat_{\tilde{\mathbb{S}}_{\e}} (\tilde{\mathbb{S}}_{\e})\geq cat_{M_{\delta}}(M)$.

On the other hand, let us choose a function $h(\e)>0$ such that $h(\e)\rightarrow 0$ as $\e\rightarrow 0$ and such that $d_{V_{0}}+h(\e)$ is not a critical level for $\J_{\e}$. For $\e>0$ small enough, we deduce from Proposition \ref{prop2.1} that $\J_{\e}$ satisfies the Palais-Smale condition in $\widetilde{\N}_{\e}$. Then, by $(ii)$ of Lemma \ref{SW3}, we infer that $\Psi_{\e}$ satisfies the Palais-Smale condition in $\tilde{\mathbb{S}}_{\e}$. Hence, by Theorem \ref{LSt}, we obtain that $\Psi_{\e}$ has at least $cat_{\tilde{\mathbb{S}}_{\e}}(\tilde{\mathbb{S}}_{\e})$ critical points on $\tilde{\mathbb{S}}_{\e}$. In the light of $(iii)$ of Lemma \ref{SW3}, we can infer that $\J_{\e}$ admits at least $cat_{M_{\delta}}(M)$ critical points. 
\end{proof}

\subsection{Concentration of the maximum points}
In what follows, we study the behavior of maximum points of solutions to \eqref{R}. Firstly, we establish $L^{\infty}$-estimate using a variant of the Moser iteration argument  \cite{Moser}.

\begin{lem}\label{lemMoser}
Let $v_{n}$ be a solution to
\begin{equation}\label{Pn}
\left\{
\begin{array}{ll}
(-\Delta)^{s}_{p} v_{n} + V_{n}(x) |v_{n}|^{p-2}v_{n}= \left(\frac{1}{|x|^{\mu}}*F(v_{n})\right)f(v_{n}) &\mbox{ in } \R^{N}\\
v_{n}\in W^{s,p}(\R^{N}),\quad v_{n}>0 &\mbox{ in } \R^{N}, 
\end{array}
\right.
\end{equation}
where $V_{n}(x)= V(\e_{n}x+ \e_{n}\tilde{y}_{n})$, and $\e_{n}\tilde{y}_{n}\rightarrow y\in M$. 

If $v_{n}\rightarrow v\neq 0$ in $W^{s,p}(\R^{N})$, then there exists $C>0$ such that 
$$
|v_{n}|_{\infty}\leq C \mbox{ for all } n\in \mathbb{N}.
$$ 
Furthermore 
\begin{equation*}
\lim_{|x|\rightarrow \infty} v_{n}(x)=0 \mbox{ uniformly in } n\in \mathbb{N},
\end{equation*}
and there exists $\sigma>0$ such that $|v_{n}|_{\infty}\geq \sigma$ for any $n\in \mathbb{N}$.

\end{lem}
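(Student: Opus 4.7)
The plan is to establish the three claims in order: the uniform $L^{\infty}$ bound (via a Moser iteration tailored to the fractional $p$-Laplacian and the convolution term), the uniform decay at infinity (via uniform H\"older regularity combined with $L^p$-tightness), and the lower bound $|v_n|_{\infty}\geq\sigma$ (via equicontinuity and the strong convergence $v_n\to v\neq 0$).

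\textbf{Step 1 (Moser iteration).} For fixed $\beta\geq 1$ and $L>0$, I will test \eqref{Pn} with $\varphi_n = v_n\,v_{n,L}^{p(\beta-1)}$, where $v_{n,L}=\min\{v_n,L\}$. An algebraic inequality for the fractional $p$-Laplacian (see e.g.\ the estimates used in \cite{IMS}) together with the fractional Sobolev inequality \eqref{FSI} applied to $w_{n,L}:=v_n v_{n,L}^{\beta-1}$ gives
\[
|w_{n,L}|_{p^{*}_{s}}^{p}\leq C\beta^{p}\left(\int_{\R^{N}} V_{n}|v_n|^{p}v_{n,L}^{p(\beta-1)}\,dx + \int_{\R^{N}} K(v_n)\,f(v_n)\,v_n\,v_{n,L}^{p(\beta-1)}\,dx\right).
\]
Now Lemma \ref{lemK} yields $|K(v_n)|_{\infty}\leq C_{0}$ uniformly in $n$, and $(f_1)$ together with $p<q_{1}\leq q_{2}<\frac{p(N-\mu)}{N-sp}<p^{*}_{s}$ lets me bound $f(v_n)v_n$ by $C(|v_n|^{q_{1}}+|v_n|^{q_{2}})$ with $q_{i}<p^{*}_{s}$. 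H\"older's inequality in the convolution term then reduces the right hand side to a quantity controlled by $|w_{n,L}|_{q}$ for some $q\in[p,p^{*}_{s})$ and a uniform constant (since $(v_n)$ is bounded in $W^{s,p}(\R^{N})$). Sending $L\to\infty$ via Fatou and iterating $\beta_{k+1}=\beta_{k}\,p^{*}_{s}/p$ starting from $\beta_{0}=p^{*}_{s}/p$, I obtain $|v_n|_{r}\leq C(r)$ uniformly in $n$ for every $r\in[p,\infty)$, and passing to the limit $r\to\infty$ yields $|v_n|_{\infty}\leq C$.

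\textbf{Step 2 (uniform decay at infinity).} With the uniform $L^{\infty}$ bound at hand, Corollary 5.5 of \cite{IMS} (applicable because the right hand side of \eqref{Pn}, namely $V_n|v_n|^{p-2}v_n - K(v_n)f(v_n)$, is uniformly bounded in $L^{\infty}$ thanks to Lemma \ref{lemK} and $(f_1)$) gives a uniform local H\"older estimate: $v_n\in C^{0,\alpha}_{\mathrm{loc}}(\R^{N})$ with seminorms bounded independently of $n$. On the other hand $v_n\to v$ in $W^{s,p}(\R^{N})$, hence in $L^{p}(\R^{N})$, so for every $\eta>0$ there exists $R=R(\eta)>0$ with $\int_{|x|>R}|v_n|^{p}\,dx<\eta$ uniformly in $n$. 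I will argue by contradiction: if $v_n(x)\not\to 0$ uniformly at infinity, there would exist $\delta>0$ and sequences $n_k\to\infty$, $|x_k|\to\infty$ with $v_{n_k}(x_k)\geq\delta$; by equi-H\"older continuity $v_{n_k}\geq\delta/2$ on a ball of fixed radius $\rho>0$ around $x_k$, contradicting the $L^p$-tightness above.

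\textbf{Step 3 (lower $L^{\infty}$ bound).} Since $v_n\to v$ in $W^{s,p}(\R^{N})$ with $v\not\equiv 0$ and both $v_n,v$ are equi-H\"older continuous with uniform $L^{\infty}$ bound from Steps 1--2, Arzel\`a--Ascoli combined with the almost everywhere convergence implies $v_n\to v$ locally uniformly in $\R^{N}$. Choosing any $x_{0}\in\R^{N}$ with $v(x_{0})>0$, one gets $|v_n|_{\infty}\geq v_n(x_{0})\geq v(x_{0})/2$ for $n$ large, and taking $\sigma>0$ even smaller ensures the bound for every $n$.

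\textbf{Main obstacle.} The delicate point is Step 1: implementing the Moser scheme for $(-\Delta)^{s}_{p}$ requires the correct algebraic inequality for the double integral with the truncated test function $v_n v_{n,L}^{p(\beta-1)}$, and one must extract the constant $\beta^p$ cleanly to iterate. The presence of the nonlocal convolution $K(v_n)$ would normally complicate the bootstrap, but Lemma \ref{lemK} reduces it, up to a constant, to a purely local power-type nonlinearity, so the iteration closes in the same way as for subcritical local problems.
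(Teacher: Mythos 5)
Your three-step plan tracks the paper's proof, and Steps~2 and~3 are essentially sound (the paper's Step~3 is shorter: it extracts balls $\B_{R}(y_{n})$ with $\int_{\B_R(y_n)}|v_n|^p\geq\beta>0$ from the strong convergence $v_n\to v\neq 0$ and notes $\beta\leq |\B_R(0)|\,|v_n|_\infty^p$, so $|v_n|_\infty\not\to 0$; your Arzel\`a--Ascoli argument also works but is heavier). The real issue is Step~1, where you diverge from the paper and where your account has a genuine gap.

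First, a minor sign slip in your displayed estimate: when you test \eqref{Pn} with $v_n v_{n,L}^{p(\beta-1)}$, the potential term $\int V_n|v_n|^p v_{n,L}^{p(\beta-1)}\,dx$ appears on the \emph{left} of the resulting inequality (it is nonnegative and is in fact used by the paper to absorb part of the nonlinearity); it should not be added to the right-hand side.

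More substantially, the paper does \emph{not} control the nonlinearity by $C(|v_n|^{q_1}+|v_n|^{q_2})$ with strictly subcritical $q_i$ as you propose. It uses the absorption form $|f(t)|\leq\xi|t|^{p-1}+C_\xi|t|^{\p-1}$ with $\xi\in(0,V_0)$, landing on \eqref{simo1}, i.e.\ $|w_{L,n}|_{\p}^p\leq C\beta^p\int |v_n|^{\p}v_{L,n}^{p(\beta-1)}$, which is \emph{critical}. To close it, the paper splits the right-hand side over $\{v_n<R\}$ and $\{v_n>R\}$ (see \eqref{simo2}), and uses H\"older on $\{v_n>R\}$ to produce a factor $\bigl(\int_{\{v_n>R\}}v_n^{\p}\bigr)^{(\p-p)/\p}$ multiplying $|w_{L,n}|_{\p}^p$. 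This factor is made uniformly small in $n$ by taking $R$ large — \emph{precisely because $v_n\to v$ strongly in $W^{s,p}(\R^N)$} (see \eqref{simo3}). That is where the hypothesis $v_n\to v$ enters Step~1; your argument invokes only boundedness of $(v_n)$, so you never obtain this uniform smallness. Your claim that the H\"older step "reduces the right hand side to a quantity controlled by $|w_{n,L}|_q$ for some $q\in[p,\p)$ and a uniform constant" is exactly the point that needs a proof, and as stated it does not close the iteration: with the subcritical bound $|f(v_n)v_n|\leq C(|v_n|^{q_1}+|v_n|^{q_2})$ the recursion that balances exponents is $q_2+p(\beta_{m+1}-1)=\p\beta_m$, i.e.\ $\beta_{m+1}=\tfrac{\p}{p}\beta_m-\tfrac{q_2-p}{p}$, which is \emph{not} your announced $\beta_{k+1}=\beta_k\p/p$, and one must still track the compounded constants $(C\beta_{m+1})^{1/(\beta_{m+1}-1)}$ and the extra factor $|v_n|_{\p}^{q_i-p}$ from H\"older. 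Your route can be made rigorous (the geometric growth of $\beta_m$ makes both products converge), but as written it is under-specified and the key absorption idea of the paper — the $\{v_n<R\}/\{v_n>R\}$ split exploiting strong convergence — is missing from your proposal. Either adopt the paper's splitting argument, or carefully set up the subcritical recursion with the correct $\beta_{m+1}$, state explicitly that the first step starts from the Sobolev bound $|v_n|_{\p}\leq C$, and verify the convergence of the iterated constants.

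Finally, in Step~2 you write the right-hand side as $V_n|v_n|^{p-2}v_n-K(v_n)f(v_n)$; from \eqref{Pn} the datum fed into Corollary~5.5 of \cite{IMS} is $K(v_n)f(v_n)-V_n|v_n|^{p-2}v_n$. The sign is immaterial for the $L^\infty$ bound, but worth correcting.
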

\begin{proof}
For any $L>0$ and $\beta>1$, let us consider the function 
\begin{equation*}
\gamma(v_{n})=\gamma_{L, \beta}(v_{n})=v_{n} v_{L, n}^{p(\beta-1)}\in \h
\end{equation*}
where  $v_{L,n}=\min\{v_{n}, L\}$. 
Let us observe that, since $\gamma$ is an increasing function, then it holds
\begin{align*}
(a-b)(\gamma(a)- \gamma(b))\geq 0 \quad \mbox{ for any } a, b\in \R.
\end{align*}
Define the functions 
\begin{equation*}
\Lambda(t)=\frac{|t|^{p}}{p} \quad \mbox{ and } \quad \Gamma(t)=\int_{0}^{t} (\gamma'(\tau))^{\frac{1}{p}} d\tau. 
\end{equation*}
Fix $a, b\in \R$ such that $a>b$. Then, from the above definitions and applying Jensen's inequality we get
\begin{align*}
\Lambda'(a-b)(\gamma(a)-\gamma(b)) &=(a-b)^{p-1} (\gamma(a)-\gamma(b))= (a-b)^{p-1} \int_{b}^{a} \gamma'(t) dt \\
&= (a-b)^{p-1} \int_{b}^{a} (\Gamma'(t))^{p} dt \geq \left(\int_{b}^{a} (\Gamma'(t)) dt\right)^{p}.
\end{align*}
In similar way, we can prove that the above inequality is true for any $a\leq b$. This means that
\begin{equation}\label{Gg}
\Lambda'(a-b)(\gamma(a)-\gamma(b))\geq |\Gamma(a)-\Gamma(b)|^{p} \mbox{ for any } a, b\in\R. 
\end{equation}
By \eqref{Gg}, 
\begin{align}\label{Gg1}
|\Gamma(v_{n})(x)- \Gamma(v_{n})(y)|^{p} \leq |v_{n}(x)- v_{n}(y)|^{p-2} (v_{n}(x)- v_{n}(y))((v_{n}v_{L,n}^{p(\beta-1)})(x)- (v_{n}v_{L,n}^{p(\beta-1)})(y)). 
\end{align}
Now, we take $\gamma(v_{n})=v_{n} v_{L, n}^{p(\beta-1)}$ as test function in \eqref{Pn}. Then, in view of \eqref{Gg1}, 
\begin{align}\label{BMS}
&[\Gamma(v_{n})]^{p}_{s,p}+\int_{\R^{N}} V(\e_{n} x)|v_{n}|^{p}v_{L, n}^{p(\beta-1)} dx \nonumber \\
&\leq \iint_{\R^{2N}} \frac{ |v_{n}(x)- v_{n}(y)|^{p-2} (v_{n}(x)- v_{n}(y))}{|x-y|^{N+sp}} ((v_{n}v_{L, n}^{p(\beta-1)})(x)-(v_{n} v_{L,n}^{p(\beta-1)})(y)) \,dx dy +\int_{\R^{N}} V(\e_{n} x)v_{n}^{p}v_{L,n}^{p(\beta-1)} dx \nonumber\\
&=\int_{\R^{N}} \left(\frac{1}{|x|^{\mu}}*F(v_{n}) \right)f(v_{n}) v_{n} v_{L,n}^{p(\beta-1)} dx.
\end{align}
Since $\Gamma(v_{n})\geq \frac{1}{\beta} v_{n} v_{L,n}^{\beta-1}$, 
and invoking Theorem \ref{Sembedding}, we get 
\begin{equation}\label{SS1}
[\Gamma(v_{n})]^{p}_{s,p}\geq S_{*}^{-1} |\Gamma(v_{n})|^{p}_{\p}\geq \left(\frac{1}{\beta}\right)^{p} S_{*}^{-1} |v_{n} v_{L,n}^{\beta-1}|^{p}_{\p}.
\end{equation}
On the other hand, from the boundedness of $(v_{n})$ and Lemma \ref{lemK}, it follows that there exists $C_{0}>0$ such that
\begin{equation}\label{GbAY}
\sup_{n\in \mathbb{N}} |K(v_{n})|_{\infty}\leq C_{0}.
\end{equation}
From the growth assumptions on $f$, for any $\xi>0$ there exists $C_{\xi}>0$ such that
\begin{equation}\label{SS2}
|f(v_{n})|\leq \xi |v_{n}|^{p-1}+C_{\xi}|v_{n}|^{\p-1}.
\end{equation}
Choosing $\xi\in (0, V_{0})$, and using \eqref{SS1}, \eqref{GbAY} and \eqref{SS2}, we can see that \eqref{BMS} yields
\begin{align}\label{simo1}
|w_{L,n}|^{p}_{\p}\leq C\beta^{p} \int_{\R^{N}} |v_{n}|^{\p} v_{L,n}^{p(\beta-1)} dx,
\end{align}
where $w_{L,n}:=v_{n} v_{L,n}^{\beta-1}$.
Now, we take $\beta=\frac{\p}{p}$ and fix $R>0$. Observing that $0\leq v_{L,n}\leq v_{n}$, we can deduce that
\begin{align}\label{simo2}
\int_{\R^{N}} v^{\p}_{n}v_{L,n}^{p(\beta-1)}dx&=\int_{\R^{N}} v^{\p-p}_{n} v^{p}_{n} v_{L,n}^{\p-p}dx \nonumber\\
&=\int_{\R^{N}} v^{\p-p}_{n} (v_{n} v_{L,n}^{\frac{\p-p}{p}})^{p}dx \nonumber \\
&\leq \int_{\{v_{n}<R\}} R^{\p-p} v^{\p}_{n} dx+\int_{\{v_{n}>R\}} v^{\p-p}_{n} (v_{n} v_{L,n}^{\frac{\p-p}{p}})^{p}dx \nonumber\\
&\leq \int_{\{v_{n}<R\}} R^{\p-p} v^{\p}_{n} dx+\left(\int_{\{v_{n}>R\}} v^{\p}_{n} dx\right)^{\frac{\p-p}{\p}} \left(\int_{\R^{N}} (v_{n} v_{L,n}^{\frac{\p-p}{p}})^{\p}dx\right)^{\frac{p}{\p}}.
\end{align}
Since $v_{n}\rightarrow v$ in $W^{s, p}(\R^{N})$, we can see that for any $R$ sufficiently large
\begin{equation}\label{simo3}
\left(\int_{\{v_{n}>R\}} v^{\p}_{n} dx\right)^{\frac{\p-p}{\p}}\leq \frac{1}{2C \beta^{p}}.
\end{equation}
Putting together \eqref{simo1}, \eqref{simo2} and \eqref{simo3} we get
\begin{equation*}
\left(\int_{\R^{N}} (v_{n} v_{L,n}^{\frac{\p-p}{p}})^{\p} \, dx\right)^{\frac{p}{\p}}\leq C\beta^{p} \int_{\R^{N}} R^{\p-p} v^{\p}_{n} dx<\infty
\end{equation*}
and taking the limit as $L\rightarrow \infty$, we obtain $v_{n}\in L^{\frac{(\p)^{2}}{p}}(\R^{N})$.

Now, using $0\leq v_{L,n}\leq v_{n}$ and passing to the limit as $L\rightarrow \infty$ in \eqref{simo1}, we have
\begin{equation*}
|v_{n}|_{\beta\p}^{\beta p}\leq C \beta^{p} \int_{\R^{N}} v^{\p+p(\beta-1)}_{n} \, dx,
\end{equation*}
from which we deduce that
\begin{equation*}
\left(\int_{\R^{N}} v^{\beta\p}_{n} dx\right)^{\frac{1}{(\beta-1)\p}}\leq (C \beta)^{\frac{1}{\beta-1}} \left(\int_{\R^{N}} v^{\p+p(\beta-1)}_{n}\, dx\right)^{\frac{1}{p(\beta-1)}}.
\end{equation*}
For $m\geq 1$ we define $\beta_{m+1}$ inductively so that $\p+p(\beta_{m+1}-1)=\p \beta_{m}$ and $\beta_{1}=\frac{\p}{p}$. Then
\begin{equation*}
\left(\int_{\R^{N}} v_{n}^{\beta_{m+1}\p} dx\right)^{\frac{1}{(\beta_{m+1}-1)\p}}\leq (C \beta_{m+1})^{\frac{1}{\beta_{m+1}-1}} \left(\int_{\R^{N}} v_{n}^{\p\beta_{m}}\, dx\right)^{\frac{1}{\p(\beta_{m}-1)}}.
\end{equation*}
Let us define
$$
D_{m}=\left(\int_{\R^{N}} v_{n}^{\p\beta_{m}}\, dx\right)^{\frac{1}{\p(\beta_{m}-1)}}.
$$
Using an iteration argument, we can find $C_{0}>0$ independent of $m$ such that
$$
D_{m+1}\leq \prod_{k=1}^{m} (C \beta_{k+1})^{\frac{1}{\beta_{k+1}-1}}  D_{1}\leq C_{0} D_{1}.
$$
Taking the limit as $m\rightarrow \infty$ we get $|v_{n}|_{\infty}\leq K$ for all $n\in \mathbb{N}$.
Moreover, from Corollary $5.5$ in \cite{IMS}, we can deduce that $v_{n}\in \mathcal{C}^{0, \alpha}(\R^{N})$ for some $\alpha>0$ (independent of $n$) and $[v_{n}]_{\mathcal{C}^{0, \alpha}(\R^{N})}\leq C$, with $C$ independent of $n$. Since $v_{n}\rightarrow v$ in $W^{s, p}(\R^{N})$, we can infer that $\lim_{|x|\rightarrow\infty}v_{n}(x)=0$ uniformly in $n\in \mathbb{N}$.
Moreover, using $v_{n}\rightarrow v\neq 0$ in $W^{s,p}(\R^{N})$ again, it follows that there are $(y_{n})$ and $\beta, R>0$ such that
$$
\int_{\B_{R}(y_{n})}|v_{n}|^{p}\,dx\geq \beta.
$$
If $|v_{n}|_{\infty}\rightarrow 0$, then we have
$$
\beta\leq \int_{\B_{R}(y_{n})}|v_{n}|^{p}\,dx\leq |\B_{R}(0)| |v_{n}|^{p}_{\infty}\rightarrow 0
$$
which gives a contradiction.
Hence, there exists $\sigma>0$ such that $|v_{n}|_{\infty}\geq \sigma$ for all $n\in \mathbb{N}$.
\end{proof}

At this point, we are able to conclude the proof of Theorem \ref{thmA1} as follows.
Let $u_{\e_{n}}$ be a solution to \eqref{R}. Then $v_{n}=u_{\e_{n}}(\cdot+\tilde{y}_{n})$ solves the problem
\begin{equation*}
\left\{
\begin{array}{ll}
(-\Delta)^{s}_{p}v_{n} + V_{n}(x) |v_{n}|^{p-2}v_{n}= \left(\frac{1}{|x|^{\mu}}*F(v_{n})\right)f(v_{n}) &\mbox{ in } \R^{N}\\
v_{n}\in W^{s,p}(\R^{N}), \quad v_{n}>0 &\mbox{ in } \R^{N}, 
\end{array}
\right.
\end{equation*}
where $V_{n}(x)= V(\e_{n}x+ \e_{n}\tilde{y}_{n})$, and $(\tilde{y}_{n})\subset \R^{N}$ is given by Lemma \ref{prop3.3}. Moreover, up to a subsequence, $y_{n}:=\e_{n}\tilde{y}_{n}\rightarrow y\in M$ and $v_{n}\rightarrow v$ in $W^{s,p}(\R^{N})$.
If $p_{n}$ denotes a maximum point of $v_{n}$, then there exists $R>0$ such that $|p_{n}|\leq R$ for all $n\in \mathbb{N}$, by Lemma \ref{lemMoser}. Thereby the maximum point of $u_{\e_{n}}$ is given by $z_{\e_{n}}=p_{n}+\tilde{y}_{n}$ and we get $\e_{n}z_{\e_{n}}\rightarrow y\in M$. From the continuity of $V$ we deduce that $V(\e_{n}z_{\e_{n}})\rightarrow V(y)=V_{0}$ as $n\rightarrow \infty$. Therefore, if $u_{\e}$ is a solution to \eqref{R} then $w_{\e}(x)=u_{\e}(x/\e)$ is a solution to \eqref{P}. Hence, the maximum points $\eta_{\e}$ and $z_{\e}$ of $w_{\e}$ and $u_{\e}$, respectively, satisfy $\eta_{\e}=\e z_{\e}$ from which we can infer that $V(\eta_{\e_{n}})\rightarrow V_{0}$ as $n\rightarrow \infty$.

Finally, we give an estimate on the decay properties of solutions $w_{n}$ of \eqref{P}. 
For this aim, using Lemma $7.1$ in \cite{DPQna}, we can find a positive function $\omega$ and a constant $C>0$ such that for large $|x|>R_{0}$ it holds that $\omega(x)\leq \frac{C}{1+|x|^{N+sp}}$ and $(-\Delta)^{s}_{p}\omega+\frac{V_{0}}{2}\omega^{p-1}\geq 0$.
From Lemma \ref{lemMoser}, \eqref{GbAY} and \eqref{SS2} we obtain for some large $R_{1}>0$
\begin{align*}
(-\Delta)^{s}_{p}v_{n}+\frac{V_{0}}{2}v_{n}^{p-1}&=\left(\frac{1}{|x|^{\mu}}*F(v_{n})\right)f(v_{n})-\left(V_{n}-\frac{V_{0}}{2}\right)v_{n}^{p-1} \\
&\leq \left(\frac{1}{|x|^{\mu}}*F(v_{n})\right)f(v_{n})-\frac{V_{0}}{2}v_{n}^{p-1} \\
&\leq 0 \quad \mbox{ for } |x|>R_{1}.
\end{align*}
In view of the continuity of $v_n$ and $\omega$, there exists some constant $C_{1}>0$ such that 
$\psi_{n}:=v_{n}-C_{1}w\leq 0$ on $|x|=R_{2}$, where $R_{2}=\max\{R_{0}, R_{1}\}$. Moreover, 
arguing as in Remark $3$ in \cite{AI}, we can prove that $(-\Delta)^{s}_{p}\psi_{n}+\frac{V_{0}}{2}\psi_{n}^{p-1}\leq 0$ holds for $|x|\geq R_{2}$. Thus, by the maximum principle \cite{DPQ}, we infer that $\psi_{n}\leq 0$ for $|x|\geq R_{2}$, that is $v_{n}\leq C_{1} w$ for $|x|\geq R_{2}$. Recalling that $w_{n}(x)=u_{n}(\frac{x}{\e_{n}})=v_{n}(\frac{x}{\e_{n}}-\tilde{y}_{n})$ we can deduce that 
\begin{align*}
w_{n}(x)&=u_{n}\left(\frac{x}{\e_{n}}\right)=v_{n}\left(\frac{x}{\e_{n}}-\tilde{y}_{n}\right) \\
&\leq \frac{\tilde{C}}{1+|\frac{x}{\e_{n}}-\tilde{y}_{n}|^{N+sp}} \\
&=\frac{\tilde{C} \e_{n}^{N+sp}}{\e_{n}^{N+sp}+|x- \e_{n} \tilde{y}_{n}|^{N+sp}} \\
&\leq \frac{\tilde{C} \e_{n}^{N+sp}}{\e_{n}^{N+sp}+|x-\eta_{\e_{n}}|^{N+sp}}
\end{align*}
which gives the required estimate.


\begin{thebibliography}{777}

\bibitem{Ack}
N. Ackermann, 
{\it On a periodic Schr\"odinger equation with nonlocal superlinear part},
Math. Z. {\bf 248} (2004) 423--443.




\bibitem{AM}
C. O. Alves and O. H. Miyagaki, 
{\it Existence and concentration of solution for a class of fractional elliptic equation in $\R^{N}$ via penalization method},
Calc. Var. Partial Differential Equations {\bf 55} (2016), no. 3, Art. 47, 19 pp.


\bibitem{AY2}
C. A. Alves and M. Yang, 
{\it Multiplicity and concentration of solutions for a quasilinear Choquard equation},
J. Math. Phys. {\bf 55} (2014), no. 6, 061502, 21 pp.



\bibitem{AR}
A. Ambrosetti  P. H. and Rabinowitz,
{\it Dual variational methods in critical point theory and applications}, 
J. Funct. Anal. {\bf 14} (1973), 349--381.

\bibitem{A1}
V. Ambrosio, 
{\it Multiple solutions for a fractional $p$-Laplacian equation with sign-changing potential}, 
Electron. J. Diff. Equ., vol. 2016 (2016), no. 151, pp. 1--12.


\bibitem{A3}
V. Ambrosio, 
{\it Multiplicity of positive solutions for a class of fractional Schr\"odinger equations via penalization method},
Ann. Mat. Pura Appl. (4) {\bf 196} (2017), no. 6, 2043--2062.


\bibitem{Apota}
V. Ambrosio, {\em Multiplicity and concentration results for a fractional Choquard equation via penalization method}, 
Potential Anal. {\bf 50} (2019), no. 1, 55--82.

\bibitem{A4}
V. Ambrosio, {\it A multiplicity result for a fractional $p$-Laplacian problem without growth conditions},
 Riv. Math. Univ. Parma (N.S.) {\bf 9} (2018), no. 1, 53--71.


\bibitem{AH}
V. Ambrosio and H. Hajaiej, 
{\it Multiple solutions for a class of nonhomogeneous fractional Schr\"odinger equations in $\R^N$}, 
J. Dynam. Differential Equations {\bf 30} (2018), no. 3, 1119--1143.


\bibitem{AI}
V. Ambrosio and T. Isernia, 
{\it Multiplicity and concentration results for some nonlinear Schr\"odinger equations with the fractional $p$-Laplacian}, Discrete Contin. Dyn. Syst., {\bf 38} (2018), no. 11, 5835--5881.



\bibitem{BBMP}
P. Belchior, H. Bueno, O. H. Miyagaki and G. A. Pereira,
{\it Remarks about a fractional Choquard equation: ground state, regularity and polynomial decay},
Nonlinear Analysis {\bf 164} (2017), 38--53.



\bibitem{BC}
V. Benci and G. Cerami, 
{\it Multiple positive solutions of some elliptic problems via the Morse theory and the domain topology},
Calc. Var. Partial Differential Equations {\bf 2} (1994), no. 1, 29--48.



\bibitem{BL}
H. Br\'ezis and E. Lieb, 
{\it A relation between pointwise convergence of functions and convergence of functionals}, 
Proc. Amer. Math. Soc. {\bf 88} (1983), no. 3, 486--490.





\bibitem{DSS}
P. d'Avenia, G. Siciliano, and M. Squassina,
{\it On fractional Choquard equations},
Math. Models Methods Appl. Sci. {\bf 25} (2015), no. 8, 1447--1476.

\bibitem{DPQ}
L.M. Del Pezzo and A. Quaas, 
{\it A Hopf's lemma and a strong minimum principle for the fractional $p$-Laplacian},
J. Differential Equations, \textbf{263} (2017), no. 1, 765--778.

\bibitem{DPQna}
L. M. Del Pezzo and A. Quaas,
{\it Spectrum of the fractional $p$-Laplacian in $\R^N$ and decay estimate for positive solutions of a Schr\"odinger equation}, preprint arXiv:1812.00925.

\bibitem{DCKP}
A. Di Castro, T. Kuusi and G. Palatucci, 
{\it Local behavior of fractional $p$-minimizers},
Ann. Inst. H. Poincar\'e Anal. Non Lin\'eaire, \textbf{33} (2016), no. 5, 1279--1299. 

\bibitem{DPV}
E. Di Nezza, G. Palatucci and E. Valdinoci,
{\it Hitchhiker's guide to the fractional Sobolev spaces},
Bull. Sci. math. {\bf 136} (2012), 521--573.

\bibitem{DMV}
S. Dipierro, M. Medina and E. Valdinoci, 
{\it Fractional elliptic problems with critical growth in the whole of $\R^{n}$},  
Appunti. Scuola Normale Superiore di Pisa (Nuova Serie) [Lecture Notes. Scuola Normale Superiore di Pisa (New Series)], 15. Edizioni della Normale, Pisa, 2017. viii+152 pp.
	
\bibitem{FQT}
P. Felmer, A. Quaas and J. Tan,
{\it Positive solutions of the nonlinear {S}chr{\"o}dinger equation with the fractional Laplacian},
Proc. Roy. Soc. Edinburgh Sect. A {\bf 142} (2012), 1237--1262.

\bibitem{FiP}
A. Fiscella and P. Pucci, 
{\it $p$-fractional Kirchhoff equations involving critical nonlinearities}, 
Nonlinear Anal. Real World Appl. {\bf 35} (2017), 350--378.



\bibitem{FL}
R.L. Frank and E. Lenzmann,
{\it On ground states for the $L^2$-critical boson star equation},
preprint arXiv:0910.2721.

\bibitem{FrP}
G. Franzina and G. Palatucci, 
{\it Fractional $p$-eigenvalues},
Riv. Math. Univ. Parma (N.S.), \textbf{5} (2014), no. 2, 373--386.

\bibitem{GY}
F. Gao and M. Yang, 
{\it On nonlocal Choquard equations with Hardy-Littlewood-Sobolev critical exponents}, 
J. Math. Anal. Appl. {\bf 448} (2) (2017) 1006--1041.


\bibitem{IMS} 
A. Iannizzotto, S. Mosconi and M. Squassina,
{\it Global H\"older regularity for the fractional $p$-Laplacian},
Rev. Mat. Iberoam., \textbf{32} (2016), 1353--1392.

\bibitem{KP}
T. Kuusi and G. Palatucci (Eds.),
{\it Recent developments in Nonlocal Theory}, De Gruyter, Berlin/Boston, 2018, 454 pp.

\bibitem{Laskin1}
N. Laskin,
{\it Fractional quantum mechanics and L\'evy path integrals},
Phys. Lett. A {\bf 268} (2000), no. 4-6, 298--305. 

\bibitem{LR}
S. Liang and V. R\u{a}dulescu,
{\it Existence of infinitely many solutions for degenerate Kirchhoff-type Schr\"odinger-Choquard equations},
Electron. J. Differential Equations 2017, Paper No. 230, 17 pp.


\bibitem{Lieb}
E. H. Lieb 
{\it Existence and uniqueness of the minimizing solution of ChoquardÕs nonlinear equation}, 
Stud. Appl. Math. {\bf 57} (1977), 93--105.

\bibitem{LL}
E. H. Lieb and M. Loss,
{\it Analysis}, 
Graduate Studies in Mathematics, 14. American Mathematical Society, Providence, RI, 1997. xviii+278 pp.

\bibitem{LLq}
E. Lindgren and P. Lindqvist, 
{\it Fractional eigenvalues},
Calc. Var. {\bf 49} (2014) 795--826.

\bibitem{Lions}
P.L. Lions, 
{\it The Choquard equation and related questions},
Nonlinear Anal. {\bf 4} (1980), 1063--1072.

\bibitem{MZ}
L. Ma and L. Zhao
{\it Classification of positive solitary solutions of the nonlinear Choquard equation}, Archive for Rational Mechanics and Analysis, {\bf 195} (2010), 455--467.


\bibitem{MMB}
J. Mawhin and G. Molica Bisci, 
{\it A Brezis-Nirenberg type result for a nonlocal fractional operator},
J. Lond. Math. Soc. (2) {\bf 95} (2017), no. 1, 73--93.

\bibitem{MW} 
J. Mawhin and M. Willem,
{\it Critical Point Theory and Hamiltonian Systems},
Springer-Verlag, 1989.

\bibitem{MeW} 
C. Mercuri and M. Willem,
{\it A global compactness result for the $p$-Laplacian involving critical nonlinearities},
Discrete Contin. Dyn. Syst., \textbf{28} (2010), 469--493.

\bibitem{MBRS}
G. Molica Bisci, V. R\u{a}dulescu and R. Servadei,
{\it Variational Methods for Nonlocal Fractional Problems}, 
{\em Cambridge University Press}, \textbf{162} Cambridge, 2016.


\bibitem{Moser}
J. Moser,
{\it A new proof of De Giorgi's theorem concerning the regularity problem for elliptic differential equations},
Comm. Pure Appl. Math. {\bf 13} (1960), 457--468.

\bibitem{MVS1}
V. Moroz  and J. Van Schaftingen,
{\it Groundstates of nonlinear Choquard equations: existence, qualitative properties and decay asymptotics}, 
J. Funct. Anal. {\bf 265} (2013), 153--84.

\bibitem{MVS2}
V. Moroz  and J. Van Schaftingen,
{\it Semi-classical states for the Choquard equation}, 
Calc. Var. PDE {\bf 52} (2015), 199--235.

\bibitem{MVS3}
V. Moroz  and J. Van Schaftingen,
{\it A guide to the Choquard equation}, 
J. Fixed Point Theory Appl. {\bf 19} (2017), no. 1, 773--813.

\bibitem{P}
G. Palatucci,
{\it The Dirichlet problem for the p-fractional Laplace equation},
Nonlinear Anal. {\bf 177} (2018), 699--732. 


\bibitem{Pek}
S. Pekar, Untersuchung uber die Elektronentheorie der Kristalle (Akademie Verlag, 1954).

\bibitem{Pen}
R. Penrose, 
{\it Quantum computation, entanglement and state reduction},
Philos. Trans. Roy. Soc. {\bf 356} (1998) 1--13.

\bibitem{PXZ1}
P. Pucci, M. Xiang and B. Zhang, 
{\it Multiple solutions for nonhomogeneous Schr\"odinger-Kirchhoff type equations involving the fractional $p$-Laplacian in $\R^N$}, Calc. Var. Partial Differential Equations {\bf 54} (2015), no. 3, 2785--2806.


\bibitem{PXZ2}
P. Pucci, M. Xiang and B. Zhang, 
{\it Existence results for Schr\"odinger-Choquard-Kirchhoff equations involving the fractional $p$-Laplacian}, 
Adv. Calc. Var., doi: 10.1515/acv-2016-0049.



\bibitem{Rab}
P. Rabinowitz, 
{\it On a class of nonlinear Schr\"odinger equations} 
Z. Angew. Math. Phys. {\bf 43} (1992), no. 2, 270--291.



\bibitem{Secchi1}
S. Secchi,
{\it Ground state solutions for nonlinear fractional Schr\"odinger equations in $\R^{N}$},
J. Math. Phys. {\bf 54} (2013), 031501.

\bibitem{SGY}
Z. Shen, F. Gao, and M. Yang,
{\it Ground states for nonlinear fractional Choquard equations with general nonlinearities},
 Math. Methods Appl. Sci. {\bf 39} (2016), no. 14, 4082--4098.


\bibitem{SW} 
A. Szulkin and T. Weth,
{\it The method of Nehari manifold},
 in Handbook of Nonconvex Analysis and Applications, edited by D. Y. Gao and D. Montreanu, International Press, Boston, 2010, 597--632.


\bibitem{WW}
J. Wei and M. Winter, 
{\it Strongly interacting bumps for the Schr\"odinger-Newton equation},
 J. Math. Phys. {\bf 50} (2009), no. 1, 012905, 22 pp. 

\bibitem{W}
M. Willem, 
{\it Minimax theorems}, 
Progress in Nonlinear Differential Equations and their Applications, {\bf 24}. Birkh\"auser Boston, Inc., Boston, MA, 1996. x+162 pp. 


\end{thebibliography}
\end{document}